  \def\\{}%
  \def\texttt#1{<#1>}%
\renewcommand{\mathcal}{\mathscr}
\renewcommand{\epsilon}{\varepsilon}
\renewcommand{\phi}{\varphi}
\newcommand{\by}{\underline {y}}
\newcommand{\bx}{\underline {x}}
\newcommand{\bi}{\underline {i}}
\newcommand{\bj}{\underline {j}}
\newcommand{\bq}{\underline q}
\newcommand{\bz}{\underline z}
\newcommand{\ba}{\underline a}
\newcommand{\dimH}{\text{dim}_{\mathcal H}}
\renewcommand{\dim}{\text{dim}_{\mathcal H}}
\newcommand{\proj}{\textrm{proj}}
\newcommand{\ber}{\nu_{\lambda}}
\newcommand{\onf}{\overline{\nu}_{\lambda}}
\newcommand{\pif}{\pi}
\def\R{{\mathbb R}}
\def\N{{\mathbb N}}
\numberwithin{equation}{section}
\theoremstyle{plain}
\newtheorem{theorem}{Theorem}[section]
\newtheorem{lemma}[theorem]{Lemma}
\newtheorem{remark}[theorem]{Remark}
\newtheorem{definition}[theorem]{Definition}
\newtheorem{corollary}[theorem]{Corollary}
\newtheorem*{theoreminf}{Informal version of the main theorem}
\providecommand{\norm}[1]{\lVert#1\rVert}
\renewcommand{\d}{\,\mathrm{d}}
\title{\textbf{Hausdorff dimension of shrinking targets on Przytycki-Urba\'nski fractals}}
\author{Thomas Jordan \\ email: \href{mailto:thomas.jordan@bristol.ac.uk}{thomas.jordan@bristol.ac.uk}\and 
Henna Koivusalo \\ email: \href{mailto:henna.koivusalo@bristol.ac.uk}{henna.koivusalo@bristol.ac.uk}\\ }
\date{\today}
\begin{document}

\maketitle
\thispagestyle{empty}


\begin{abstract}
Shrinking target problems in the context of iterated function systems have received an increasing amount of interest in the past few years. The classical shrinking target problem concerns points returning infinitely many times to a sequence of shrinking balls. In the iterated function system context, the shrinking balls problem is only well tractable in the case of similarity maps, but the case of affine maps is more elusive due to many geometric-dynamical complications. 

In the current work, we push through these complications and compute the Hausdorff dimension of a set recurring to a shrinking target of geometric balls in some affine iterated function systems. For these results, we have pinpointed a representative class of affine iterated function systems, consisting of a pair of diagonal affine maps, that was introduced by Przytycki and Urba\'{n}ski. The analysis splits into many sub-cases according to the type of the centre point of the targets, and the relative sizes of the targets and the contractions of the maps, illustrating the array of challenges of going beyond affine maps with nice projections. The proofs require heavy machinery from, and expand, the theory of Bernoulli convolutions. 
\end{abstract}

\section{Introduction}

Let $(X, d)$ be a metric space and $T:X\to X$ a Borel measurable transformation. A \emph{shrinking target set} is defined by fixing a point $z\in X$ and a sequence $(r_n)\subset (0, \infty)$ and setting
\begin{eqnarray*}
R^*(z,(r_n))&=&\{x\in X\mid T^n(x)\in B(z,r_n)\text{ for infinitely many }n\in\N\}\\
&=&\limsup_{n\to\infty} T^{-n}(B(z,r_n)).
\end{eqnarray*}
Study of sets with recurrence properties such as this are classical in dynamics. The term shrinking target was coined by Hill and Velani \cite{HillVelani95, HillVelani99}. Since then, the set $R^*$ and its variants have since been studied in a variety of settings, including circle rotations, general dynamical systems under mixing conditions, toral automorphisms, $\beta$-transformations, and the Gauss map (see \cite{Tseng08, Galatolo10, ChernovKleinbock01, BugeaudHarrapKristenseVelani10, ShenWang13, LiWangWuXu14}), and by now the literature is so extensive that it would be impossible to give an exhaustive list. In these works, typical questions to ask about $R^*$  have had to do with its size, for example its measure with respect to some $T$-invariant Borel probability measure on $X$, or, in more geometric set-ups, its Hausdorff dimension. We will focus our attention on the second question for a class of iterated function systems on $\R^d$, which we define next.

Let $f_0,\ldots,f_{k-1}:\R^d\to\R^d$ be contractions, that is, there exist $c_0,\ldots,c_{k-1}\in (0,1)$ such that for all $x,y\in\R^d$, $|f_i(x)-f_i(y)|\leq c_i|x-y|$. The collection $\{f_0, \dots, f_{k-1}\}$ is called an \emph{iterated function system (IFS)}. It is well known that there exists a unique non-empty compact set $F$ such that $F=\cup_{i=0}^{k-1} f_i(F)$ \cite{Hutchinson81}. If we assume the \emph{strong separation condition}, that for all $i\neq j\in \{0,\ldots,k-1\}$ $f_i(F)\cap f_j(F)=\emptyset$, we can define a dynamical system $E:F\to F$ by setting $E(x)=f_i^{-1}(x)$ for each $x\in f_i(F)$. Even in the absence of the strong separation condition, the map $E$ can be well-defined for example by picking a canonical choice of $E=f_i^{-1}$ on any non-empty $f_i(F)\cap f_j(F)\neq \emptyset$. We consider the shrinking target problem for the dynamical system $(F, E)$.

We will begin with a short sketch of the types of arguments applied in the study of dimension of $R^*$ in the IFS setting. The result outlined here can be deduced from \cite{HillVelani95} (for a translation from the results of Hill and Velani to the IFS context, see \cite[Theorem 2.1]{AllenBarany21}, and for related results, also see \cite[Section 4]{Baker24}). For simplicity, assume that the $f_0, \dots, f_{k-1}$ are similarities with contraction ratios $c_0, \dots, c_{k-1}$. Let $F$ be the associated self-similar set. Let $z\in F$ and $\gamma\in (0,1)$. Let $r_n=\gamma^n$ for each $n\in \N$. For the sets
$$R^*(z,(r_n))=\limsup_{n\to\infty} E^{-n}(B(z,r_n))$$
it can be shown that $\dim R^*(z,(r_n))=s(\gamma)$ where 
$$\sum_{i=0}^{k-1} c_i^s=\gamma^{-s(\gamma)}.$$
For general $(r_n)$ the same formula holds with $\gamma=\limsup_{n\to\infty}r_n^{1/n}$. 
The method of proof is to get an upper bound by finding natural coverings of the sets $E^{-n}B(z,r_n)$ and take unions of these covering for all $n$ sufficiently large. For the lower bound, sometimes disguised as a mass transference principle, one has to consider subsets of those points with $x\in E^{-n_m}(R^*((z,(r_{n_m}))))$ along some subsequence $(n_m)$ satisfying $\lim_{m\to\infty} n_{m+1}n_m^{-1}=\infty$. The argument then relies on building a suitable mass distribution on these subsets.

Now consider an IFSs consisting of affine maps, that is, each $f_0, \dots, f_{k-1}$ is of the form $f_i:x\mapsto A_ix+\tau_i$ where $A_i\in \text{GL}_d(\R)$ is a linear map with $\norm{A_i}<1$ and $\tau_i\in\R^d$ is a translation. The same basic methods in the study of Hausdorff dimension of $R^*$ are still used. However, the affine case poses many technical obstacles  of both geometric and dynamical nature. Overcoming these problems usually requires some restrictions to the types of maps being considered. Past work in this context has focused on IFSs with generic translations $\tau_0, \dots, \tau_{k-1}$, and with additional restrictions on the matrices $A_0, \dots, A_{k-1}$ \cite{KoivusaloRamirez18, BaranyTroscheit22}. Perhaps most restrictively, in many past results the returns are not to geometric balls $B(z, r_n)$, but to subsets called \emph{cylinder sets} for which the pre-images under $E$ are easier to characterise. Only limited work has been carried out in the case where the shrinking targets are balls, for self-affine IFSs. Most notably in the case of Bedford-McMullen carpets, a special class of self-affine sets in $\R^2$ based on a fixed $n\times m$ grid, B\'ar\'any and Rams \cite{BaranyRams18} computed the Hausdorff dimension of the shrinking target set (also see \cite{BakerKoivusalo24}). 

All of the existing literature on Hausdorff dimension of the shrinking target set for affine IFSs has one of two main limitations: either, the IFS has to arise from a rigid grid structure; or the class of IFSs can be somewhat broader as long as a generic translations-condition is satisfied, but the returns are to a contrived dynamical subset rather than a ball. In the current work we are able to bridge the gap between these two set-ups, by computing the Hausdorff dimension of the shrinking target set $R^*(z, (r_n))$ in a representative class of IFSs without grid structure. More precisely, we consider shrinking targets on \emph{ PU-sets}, a class of self-affine sets first defined by Przytycki and Urba\'{n}ski \cite{PrzytyckiUrbanski89}. The affine IFS defining the PU-set consists of two affine maps $f_0:(x,y)\to (\lambda x,y/2)$ and $f_2:(x,y)\to (\lambda x+(1-\lambda),y/2+1/2)$, where $\lambda>1/2$. Notice that the two translations $\tau_0, \tau_1$ are specific rather than generic, and that no alignment between images is assumed. We can now informally state our main theorem. The full statement needs more notation and is given as Theorem \ref{thm:main} underneath. 

\begin{theoreminf}
Let $\gamma\in (0,1)$. We compute explicitly the value of $\dim R^*(z, (\gamma^n))$ in the following cases: 
\begin{itemize}
\item[(1)] For all $z\in F$ and for all $\lambda\in (\frac{1}{2}, \frac{1}{2\gamma})$ outside of a set of dimension $0$. 
\item[(2)] For all $z\in F$ satisfying a \emph{unique expansion} condition (see Definition \ref{def:unique}), and for all $\lambda\in (\frac{1}{2\gamma}, 1)$ outside of a set of dimension $0$. 
\item[(3)] For typical choices of $z\in F$ (with respect to a natural measure, see Definition \ref{def:measures}), and Lebesgue almost all $\lambda$ in the \emph{region of transversality} (see Definition \ref{def:region}). 
\end{itemize}
\end{theoreminf}

The PU-family of self-affine IFSs is seemingly simple, but has been one of the main examples driving the intuition in dimension theory of self-affine sets in recent years \cite{BaranyKaenmaki17, BaranyHochmanRapaport19}. We will review some of the relevant ideas in Section \ref{sec:set-up}. 

The PU-fractals have intricate local structure, governed by their projections to the two coordinate directions. In particular, it is intimately entwined with the theory of \emph{Bernoulli convolutions}, the distribution of sums of the form $\sum_{n=1}^\infty \pm \lambda^n$, with the sign chosen randomly. It has been known since the 1930's \cite{KershnerWintner35}, that the resulting Bernoulli convolution measure is always either absolutely continuous or singular, and it was proved early on by Erd\H{o}s \cite{Erdos39} that if $\lambda^{-1}$ is Pisot then the corresponding Bernoulli convolution is singular. The question of which other parameters, if any,  result in singular measures, has been the object of intense attention over the past several decades, and has inspired new techniques in dimension theory, such as the transversality method in the 90's \cite{Solomyak95}. We now know that the set of parameters for which the resulting measure is singular is of dimension zero \cite{Shmerkin}.

The array of methods needed in this article is broad. On the dynamical side of the problem, we extend the theory of shrinking targets for affine IFSs to the geometric target case. On the geometric side, for the dimension analysis, we need to adjust the now-classical transversality techniques \cite{Solomyak95}, as well as apply the most recent results on local properties of Bernoulli convolutions \cite{Shmerkin}. The bulk of the work will be needed for the proof of part 3 of the theorem, see sections \ref{sec:upperbounds} and \ref{sec:case3}. As a corollary to the proofs, Theorem \ref{thm:dynamical}, we also obtain a result for cylinder targets.

\section{Problem set-up and statement of results} \label{sec:set-up}
Let $\{f_0, f_1\}$ be the two-map homogeneous affine iterated function system that contracts by $\lambda$ in the $x$-direction and by $\tfrac 12$ in the $y$-direction. More precisely, let $f_0:(x,y)\to (\lambda x,y/2)$ and $f_2:(x,y)\to (\lambda x+(1-\lambda),y/2+1/2)$. Denote by $F$ the limit set of the iterated function system, which  satisfies $F=f_0(F)\cup f_2(F)$. See Figure 1. 
\begin{figure}[!t]
\includegraphics{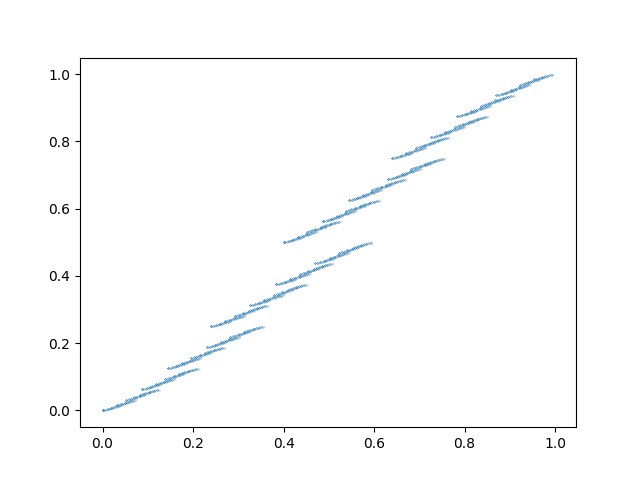}
\caption{An example of a Przytycki-Urba\'{n}ski fractal for $\lambda=0.6$.}
\end{figure}

We will also need to consider another iterated function system, which corresponds to the Bernoulli convolution. Hence, let $g_0, g_1: [0,1]\to [0,1], g_0(x)=\lambda x$ and $g_1(x)=\lambda x+(1-\lambda)$. Notice that in other words, $\{g_0, g_1\}$ is the projection of the system $\{f_0, f_1\}$ to the $x$ coordinate.

Use the notation $\Sigma$ for the sequence space $\{0,1\}^{\mathbb N}$, and $\bi, \bj$ and so on for elements of $\Sigma$ or $\Sigma_n=\{0,1\}^n$. Let $\Sigma^*=\bigcup_{n=1}^\infty \Sigma_n$. Denote by $\sigma$ the shift dynamics on the symbolic space, and also on finite words when appropriate. Furthermore, let $\bi\wedge \bj$ be the maximal joint beginning of $\bi$ and $\bj$, $\bi|_n$  the finite sequence obtained by truncating $\bi$ at the $n$-th symbol, and $|\bi|$ the length of a finite word. Furthermore, let $\bi|_{n}^k$ denote the finite word obtained from the digits of $\bi$ between $n$ and $k$: $i_{n+1}\dots i_k$. Now let
\[
[\bi]=\{\bj\in\Sigma\mid \bj|_{|\bi|}=\bi\}.
\]
Let $f_{\bi}= f_{i_1}\circ \cdots \circ f_{i_{|\bi|}}$  in the usual way. Denote by $\pi=\pi_\lambda$ the natural projection map
\[
\pi:\Sigma\to F, \quad \pi(\bi)=\lim_{n\to \infty} f_{\bi|_n}(0). 
\]
The map $\pi$ is a bijection. Denote by $\pi_I: \Sigma\to [0,1]$ the natural projection map corresponding to the iterated function system $\{g_0, g_1\}$. We will repeatedly use the notation $\bx$ for the point in $\Sigma$ with the property $\pi(\bx)=x\in F$, and analogously for, say, $z\in F$, $\bz\in \Sigma$. We also define the following measures. 

\begin{definition}\label{def:measures}
Denote the Bernoulli measure with equal weights on $\Sigma$ by $\nu$, the push-forward to $F$ by $\onf=\pi_*\nu$, and the push-forward to $[0,1]$ by $\ber=(\pi_I)_*\nu$. Note that, equivalently, $\ber=(\proj_1)_* \onf$, where $\proj_1: [0,1]^2\to [0,1]$ is the projection to the first coordinate.
\end{definition}

Before we look at the shrinking target sets in this setting, let us recall what has been previously shown in terms of the Hausdorff dimension of the set. In Proposition 2 and Theorem 7 of \cite{PrzytyckiUrbanski89} it is shown that  $\dimH F=2+\frac{\log\lambda}{\log 2}$ for those $\lambda\in (1/2, 1)$ for which the dimension of the Bernoulli convolution $\ber$ is $1$. There is  a long history of study of Bernoulli convolutions. Concerning their dimensions some key literature includes: 
\begin{enumerate}
\item
In \cite{Solomyak95} Solomyak showed that $\dim\ber=1$ for Lebesgue almost all $\lambda\in (1/2,1)$, using what is known as the transversality technique.
\item
In \cite{Hochman14} Hochman showed that $\dim\ber=1$ for all $\lambda\in (1/2,1)$ satisfying an exponential separation condition (see \ref{def:expsep}) which holds for all but a set of packing dimension $0$, and his result was refined by Shmerkin \cite{Shmerkin}.
\item
In \cite{Varju2019} Varj\'u showed that $\dim\ber=1$ for all transcendental $\lambda\in (1/2,1)$. 
\end{enumerate}
It follows that combining \cite{Varju2019} with \cite{PrzytyckiUrbanski89} yields that $\dimH F=2+\frac{\log\lambda}{\log 2}$ for all transcendental $\lambda \in (1/2,1)$. However the results of Shmerkin and Solomyak are important for this work as we will be using theorems relying on transversality (Theorem \ref{thm:transversality}) and uniform local dimension estimates under the exponential separation assumption (Theorem \ref{thm:pablo}).  

We now turn to the problem this paper is focused on.
Denote by $E: F\to F$ the expanding map that has $f_i$ as its local inverses, i.e. for any $x\in f_i(F)$, let $E(x)=f_i^{-1}(x)$ -- since $F=f_0(F)\cup f_1(F)$, $E$ is defined at every point. For those $x\in f_0(F)\cap f_1(F)$, we always take $E(x)=f_0^{-1}(x)$. Fix $z\in F$, $0<\gamma<1$, and set the sequence of {\it shrinking targets} to be the sequence of shrinking cubes $(Q_n=Q(z, \gamma^n))\subset F$. We will want to understand the subset of points $R^*\subset F$ which return to the shrinking targets infinitely often, that is, 
\[
R^*(z, \lambda, \gamma)=R^*=\{x\in F\mid E^n(x)\in Q(z, \gamma^n) \textrm{ for infinitely many }n\}. 
\]
We demonstrate in Remark \ref{rem:sharp} that for any sub-exponentially shrinking sequence of targets, the set $R^*$ has the dimension of $F$. Hence, the exponential speed of shrinking is appropriate for dimension problems for $R^*$. 

Before we can precisely state the main theorem, we need to introduce some notation. The formulation of the following definition is from \cite{Shmerkin}. 
\begin{definition}\label{def:expsep}
Let $\mathcal P_n$ be the collection of degree $n$ polynomials with coefficients in $\{0,1,-1\}$. Let 
\[
J=\{\lambda\in (\tfrac 12, 1)\mid \lim_{n\to \infty} \tfrac 1n\log \min_{P\in \mathcal P_n}|P(\lambda)|=-\infty\}. 
\]
It is shown in \cite{Hochman14} that $\dimH J=0$. Denote $I^c=\mathcal E$. We say that $\lambda\in\mathcal E$ satisfy the \emph{exponential separation condition}. The only property of this set that we use is due to Shmerkin \cite{Shmerkin}, and is characterised in Theorem \ref{thm:pablo}. 
\end{definition}

\begin{definition}\label{def:region}
There is a value $\tfrac 12<\bar \lambda<1$ such that for all $\lambda\in (1/2, \bar\lambda)$, 
\[
g(\lambda):=1+\sum_{n=1}^\infty b_n\bar\lambda^n \quad\textrm{ with }b_n\in \{0, -1, 1\}
\]
has no double-zeros (that is, there is no $\lambda$ in this range for which  $g(\lambda)=0$ and $g'(\lambda)=0$). The value of $\bar\lambda$ is approximately $0.668$, see Theorem 2.6 in \cite{ShmerkinSolomyak06}. The interval $(\tfrac 12, \bar \lambda)$ is known as the \emph{region of transversality}. The only property of $\bar\lambda$ that we will need is due to Peres and Solomyak \cite{PeresSolomyak98}  and is characterised in Theorem \ref{thm:transversality}. 
\end{definition}

\begin{definition}\label{def:unique}
Given $\lambda\in(\tfrac 12, 1)$, we say that a point $x\in [0,1]$ has a {\it unique $\lambda$-expansion} if the map $\pi_I:\Sigma\to [0,1]$ is one-to-one at $x$, that is, there a unique $\bi\in \Sigma$ such that $x=\pi_I(\bi)$. 
\end{definition}

Finally, denote the Lebesgue measure on $[0,1]$ by $\mathcal L$.  We are now ready to state the main theorem. 
\begin{theorem}\label{thm:main}
Fix $\gamma\in (0,1)$. Then the following are true: 
\begin{itemize}
\item[(1)] For any $\lambda\in (\tfrac 12, \tfrac {1}{2\gamma})\cap \mathcal E$, and for all $z\in F$, 
\[
\dimH R^*(z, \lambda, \gamma)=\frac{-\log 2}{\log(\lambda\gamma)}. 
\]
\item[(2)] For any $\lambda\in (\tfrac 1{2\gamma}, 1)\cap\mathcal E$, for any $z\in F$ such that $\pi_I(\bz)=\proj_1(z)$ has a unique $\lambda$-expansion, 
\[
\dimH R^*(z, \lambda, \gamma)=2+\frac{\log \lambda}{\log 2} - \frac{\log \gamma}{\log( \gamma\lambda)}. 
\]
\item[(3)] For $\mathcal L$-almost every $\lambda\in [\tfrac 1{2\gamma}, \bar\lambda)$, for $\onf$-almost every $z\in F$, 
\[
\dimH R^*(z, \lambda, \gamma)=\frac{2\log 2+\log \lambda}{\log (2/\gamma)}. 
\]
\end{itemize}
\end{theorem}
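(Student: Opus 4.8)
The plan is to prove the two inequalities $\dimH R^*\le s^\ast$ and $\dimH R^*\ge s^\ast$ separately, where $s^\ast=\frac{2\log 2+\log\lambda}{\log(2/\gamma)}$. The common starting point is a description of the target preimages. Writing $R^*=\limsup_n E^{-n}(Q_n)$ with $Q_n=Q(z,\gamma^n)\cap F$, and using that $E$ is the shift in the (bijective) coding $\pi$, one has $E^{-n}(Q_n)=\bigcup_{\bi\in\Sigma_n}f_\bi(Q_n)$. Because $f_0,f_1$ contract by $\lambda$ horizontally and by $\tfrac12$ vertically, $f_\bi(Q_n)$ sits inside a rectangle of dimensions $\asymp\lambda^n\gamma^n$ (horizontal) by $\asymp 2^{-n}\gamma^n$ (vertical), and since $\lambda>\tfrac12$ the horizontal side dominates. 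It is also useful to record the finer symbolic picture: setting $\ell_n=\lfloor -n\log\gamma/\log 2\rfloor$ (so $2^{-\ell_n}\asymp\gamma^n$) and $\delta_n=\gamma^n\lambda^{-\ell_n}$ (which decays exponentially, since $\lambda>\tfrac12$), the set $Q_n$ is, up to $O(1)$ pieces, $f_{\bz|_{\ell_n}}\big(F\cap\proj_1^{-1}(W_n)\big)$, where $W_n$ is an interval of length $\asymp\delta_n$ centred at $\pi_I(\sigma^{\ell_n}\bz)$; equivalently, $F$ is (essentially) the graph of the Hölder map $y\mapsto\pi_I(\text{binary digits of }y)$, on which $E$ acts as the doubling map in $y$.

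For the upper bound I would argue purely geometrically. Fix $s>s^\ast$. For each $n$, cover the rectangle containing $f_\bi(Q_n)$ by $\lceil(2\lambda)^n\rceil$ squares of side $\asymp 2^{-n}\gamma^n$; over all $\bi\in\Sigma_n$ this covers $E^{-n}(Q_n)$ by $M_n:=2^n\lceil(2\lambda)^n\rceil\asymp(4\lambda)^n$ sets of diameter $d_n\asymp(\gamma/2)^n$. Since $R^*\subseteq\bigcup_{n\ge m}E^{-n}(Q_n)$ for every $m$, $\mathcal H^s_{d_m}(R^*)\le\sum_{n\ge m}M_nd_n^s$, and $\sum_n M_nd_n^s=\sum_n\big(4\lambda(\gamma/2)^s\big)^n<\infty$ exactly when $4\lambda(\gamma/2)^s<1$, i.e.\ when $s>s^\ast$. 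Letting $m\to\infty$ gives $\mathcal H^s(R^*)=0$, hence $\dimH R^*\le s^\ast$. (This half uses no properties of Bernoulli convolutions and so holds for all $z\in F$ and all $\lambda\in(\tfrac12,1)$.)

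The lower bound is where the difficulty lies, and it is the only place the hypotheses on $\lambda$ and $z$ are used. I would build, for $\mathcal L$-a.e.\ $\lambda\in[\tfrac1{2\gamma},\bar\lambda)$, a measure $\mu_\lambda$ supported on a Cantor subset of $R^*$ obtained by hitting the targets along a rapidly increasing sequence $n_1\ll n_2\ll\cdots$ (chosen sparse enough that the hits decouple), and show $\lowdiml\mu_\lambda\ge s^\ast$ at $\mu_\lambda$-a.e.\ point. Between hits the construction keeps every sub-cylinder, so locally $\mu_\lambda$ resembles $\onf$, whose dimension is $\dimH F$; at the $k$-th hit, inside each surviving cylinder one passes to $f_\bi(Q_{n_k}\cap F)$ — fixing the $\ell_{n_k}$ binary digits matching $z$ and retaining only the deeper cylinders whose $\pi_I$-projection lies in the window $W_{n_k}$, of which there are $\asymp(2\lambda)^{q_{n_k}}$ with $\lambda^{q_{n_k}}\asymp\delta_{n_k}$. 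The heart of the matter is estimating $\mu_\lambda(B(x,r))$: at the scales occurring just after a hit, the horizontal projections of the retained cylinders pile up inside $W_{n_k}$ (the defining overlap phenomenon of Bernoulli convolutions for $\lambda>\tfrac12$), so one must show these overlaps cost exactly what the honest Ledrappier–Young accounting predicts. Following the classical transversality philosophy, I would not fix $\lambda$: integrate the relevant pairwise counting/energy quantity over $\lambda\in[\tfrac1{2\gamma},\bar\lambda)$ and apply Theorem \ref{thm:transversality} to control $\mathcal L\{\lambda:\ |\pi_I(\bi)-\pi_I(\bj)|\le r\}$, converting the bad-overlap contributions into a convergent series, so that for $\mathcal L$-a.e.\ $\lambda$ the measure $\mu_\lambda$ has the required lower local dimension. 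Two further inputs are needed: that $\dimH\ber=1$ for a.e.\ such $\lambda$ (so the windows $W_{n_k}$ carry mass $\gtrsim\delta_{n_k}^{1+\epsilon}$, giving the construction enough branching), which follows from \cite{Solomyak95}/Theorem \ref{thm:transversality} and from exactness of $\ber$; and that the window centres $\pi_I(\sigma^{\ell_n}\bz)$ are $\ber$-typical at scale $\delta_n$ for all large $n$, which for $\onf$-a.e.\ $z$ follows from shift-ergodicity of $\nu$ together with a quantitative Borel–Cantelli argument — this is precisely where the hypothesis "$\onf$-a.e.\ $z$" enters. Optimising the sparse sequence $(n_k)$ and the depth to which each window is resolved, the worst scales yield lower local dimension exactly $s^\ast$.

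The main obstacle, then, is the lower bound and within it the control of $\mu_\lambda(B(x,r))$: one has to run the transversality integration over $\lambda$ while simultaneously tracking the trivial-but-interacting dyadic structure in the vertical direction, the non-stationary multi-scale Cantor construction with its hit times, and the $\limsup$ nature of $R^*$. Making the contributions of the two coordinate directions fit together into the single exponent $s^\ast$ — effectively a Ledrappier–Young bookkeeping for a non-stationary, target-constrained self-affine measure whose horizontal marginal is a Bernoulli convolution — is where essentially all of the work goes, and it is the reason the heavy Bernoulli-convolution machinery (transversality, and the uniform local-dimension estimates of Theorem \ref{thm:pablo}) is required here but not for parts (1) and (2).
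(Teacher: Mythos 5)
Your proposal addresses only part (3) of the theorem; parts (1) and (2) are not proved at all. This matters because the three parts require genuinely different arguments on both sides. For the upper bounds, your single covering strategy (squares of side $\asymp 2^{-n}\gamma^n$, i.e.\ $(4\lambda)^n$ sets of diameter $(\gamma/2)^n$) is correct and is exactly the paper's covering for case (3), but it is not sharp in the regimes of (1) and (2): for $\lambda<\tfrac1{2\gamma}$ one must instead cover each preimage $E^{-n}(Q_n)$ by a \emph{single} cube of side $\asymp(\lambda\gamma)^n$, giving the exponent $\frac{-\log 2}{\log(\lambda\gamma)}$, and for (2) one must cover by cubes of side $2^{-(n+\ell_2(n))}$ and invoke the unique-expansion hypothesis (via Lemma \ref{uelocaldim}) to control the number of symbolic branches between levels $n+\ell_1(n)$ and $n+\ell_2(n)$. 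Likewise the lower bounds for (1) and (2) in the paper do not use transversality or an energy integral at all; they are deterministic in $\lambda$ (valid for every $\lambda\in\mathcal E$ in the relevant range) and rest on the mass distribution principle with pointwise local-dimension estimates, where the exponential separation condition enters through Corollary \ref{cor:branching}. None of this machinery appears in your write-up.

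For part (3) itself, your strategy does match the paper's: upper bound by the $(\gamma/2)^n$-covering; lower bound by a sparse-return Cantor set, a measure that is uniform between hits and split evenly over the $\asymp(2\lambda)^{\xi n}$ ambiguous branches at each hit (your Lemma-\ref{lem:msrballlower}-style Borel--Cantelli argument for the typicality of the centre, and Theorem \ref{thm:pablo} for uniform Frostman bounds, are exactly the inputs the paper uses), and an energy integral over $\lambda$ controlled by Theorem \ref{thm:transversality}. However, what you call the ``Ledrappier--Young bookkeeping'' is the entire content of the paper's Section \ref{sec:case3}: one must split the scales $2^{-r}$ into six or so ranges relative to $r_{-1},r_0,n_m,n_m+\ell_1(n_m),n_m+\ell_2(n_m,\lambda_0),n_m+\ell_2(n_m,\lambda_1),r_1$, handle the fact that $\mu_\lambda$ itself depends on $\lambda$ (which blocks a naive Fubini and forces the uniformisation over $\lambda\in[\lambda_0,\lambda_1]$ with the error term $\eta(\lambda_0,\lambda_1)$), and verify in each range that the exponent is at least $s^\ast-O(\epsilon)$. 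As written, your proposal identifies the correct plan and the correct difficulties but does not carry out any of these estimates, so even for part (3) it is an outline rather than a proof.
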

The difference between the dimensions in parts (2) and (3) is relating to the different local dimension for the Bernoulli convolution for $\proj_1(z)$. In fact for part (2) the unique expansion assumption could be strengthened to the assumption that the local dimension of the Bernoulli convolution at $\proj_1(z)$ is $-\frac{\log 2}{\log\lambda}$. It may be the case that a general formula in terms of this local dimension exists, but our arguments rely on the local dimensions being either typical or maximal.

We can also exploit Fubini's Theorem and the monotonicity in $\gamma$ of the sets $R^*(z, \lambda, \gamma)$  to obtain the following corollary.
\begin{corollary}
For almost every $\lambda\in (\frac{1}{2},\overline{\lambda})$ we have that for $\onf$ almost all $z\in F$
\begin{enumerate}

\item     
For $\gamma\in (\frac{1}{2\lambda},1)$ we have that
\[
\dimH R^*(z, \lambda, \gamma)=\frac{2\log 2+\log \lambda}{\log (2/\gamma)}. 
\]
\item 
For $\gamma\in (0,\frac{1}{2\lambda})$ we have that
\[
\dimH R^*(z, \lambda, \gamma)=\frac{-\log 2}{\log(\lambda\gamma)}. 
\]
\end{enumerate}
\end{corollary}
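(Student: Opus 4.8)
The plan is to derive both statements directly from Theorem~\ref{thm:main}, the only work being a reorganisation of the quantifiers over $\gamma$, $\lambda$ and $z$; the two ingredients are that a countable union of null sets is null (this is what Fubini's theorem supplies here) and the monotonicity of $\gamma\mapsto R^*(z,\lambda,\gamma)$. Statement (2) is essentially immediate: since $\dimH J=0$ we have $\mathcal L(J)=0$, so $\mathcal L$-a.e.\ $\lambda\in(\tfrac12,\bar\lambda)$ lies in $\mathcal E$, and for any such $\lambda$ and any $\gamma\in(0,\tfrac1{2\lambda})$ one has $\lambda\in(\tfrac12,\tfrac1{2\gamma})\cap\mathcal E$, so Theorem~\ref{thm:main}(1) applies verbatim and gives $\dimH R^*(z,\lambda,\gamma)=-\log2/\log(\lambda\gamma)$ for \emph{every} $z\in F$ (in particular for $\onf$-a.e.\ $z$).

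For statement (1) I would first record two elementary facts. Monotonicity: if $\gamma_1\le\gamma_2$ then $Q(z,\gamma_1^n)\subseteq Q(z,\gamma_2^n)$ for all $n$, hence $R^*(z,\lambda,\gamma_1)\subseteq R^*(z,\lambda,\gamma_2)$, so $\gamma\mapsto\dimH R^*(z,\lambda,\gamma)$ is non-decreasing. Continuity: $\phi(\lambda,\gamma):=(2\log2+\log\lambda)/\log(2/\gamma)$ is continuous and strictly increasing in $\gamma\in(0,1)$, the denominator being $\ge\log2>0$ there. Now fix a countable dense set $D=\mathbb Q\cap(\tfrac1{2\bar\lambda},1)$. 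For each $\gamma\in D$, Theorem~\ref{thm:main}(3) provides an $\mathcal L$-null set $N_\gamma\subset[\tfrac1{2\gamma},\bar\lambda)$ off which, for $\onf$-a.e.\ $z$, $\dimH R^*(z,\lambda,\gamma)=\phi(\lambda,\gamma)$; put $N=\bigcup_{\gamma\in D}N_\gamma$, still $\mathcal L$-null. Fix $\lambda\in(\tfrac12,\bar\lambda)\setminus(N\cup J)$; then for every $\gamma\in D_\lambda:=D\cap(\tfrac1{2\lambda},1)$ we have $\lambda\in[\tfrac1{2\gamma},\bar\lambda)\setminus N_\gamma$, so there is an $\onf$-null $Z_{\gamma,\lambda}\subset F$ off which $\dimH R^*(z,\lambda,\gamma)=\phi(\lambda,\gamma)$; set $Z_\lambda=\bigcup_{\gamma\in D_\lambda}Z_{\gamma,\lambda}$, still $\onf$-null.

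Finally, fix $z\in F\setminus Z_\lambda$ and an arbitrary $\gamma\in(\tfrac1{2\lambda},1)$, and pick $\gamma_-,\gamma_+\in D_\lambda$ with $\gamma_-<\gamma<\gamma_+$ (possible since $D_\lambda$ is dense in $(\tfrac1{2\lambda},1)$). Monotonicity gives
\[
\phi(\lambda,\gamma_-)=\dimH R^*(z,\lambda,\gamma_-)\le\dimH R^*(z,\lambda,\gamma)\le\dimH R^*(z,\lambda,\gamma_+)=\phi(\lambda,\gamma_+),
\]
and letting $\gamma_-\uparrow\gamma$, $\gamma_+\downarrow\gamma$ along $D_\lambda$, continuity of $\phi(\lambda,\cdot)$ forces $\dimH R^*(z,\lambda,\gamma)=\phi(\lambda,\gamma)$; since $N\cup J$ is $\mathcal L$-null this proves (1). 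The one step requiring a little care — the nearest thing to an obstacle — is this sandwich: one must check that the closed form $\phi(\lambda,\gamma)$ really is continuous and monotone in $\gamma$ on the relevant range, so that it is determined by its values on $D$, and that Theorem~\ref{thm:main}(3) is invoked only for pairs with $\gamma>\tfrac1{2\lambda}$, where it is the correct branch of the dichotomy.
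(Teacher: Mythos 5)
Your proof is correct, and it reaches the same two pillars the paper leans on (monotonicity of $\gamma\mapsto R^*(z,\lambda,\gamma)$ and continuity of the dimension formula in $\gamma$), but the quantifier exchange is executed differently. The paper fixes $\gamma$, observes that the bad set of pairs $(\lambda,\bj)$ is $\mathcal L\times\nu$-null, integrates over $\gamma$ and applies Fubini to conclude that for a.e.\ $(\lambda,\bj)$ the formula holds for Lebesgue-a.e.\ $\gamma$, and only then invokes monotonicity and continuity; this implicitly requires joint measurability of $(\lambda,\bj,\gamma)\mapsto\dimH R^*(\pi(\bj),\lambda,\gamma)$. You instead run the theorem only over a countable dense set of rational $\gamma$ and take countable unions of null sets in $\lambda$ and then in $z$, which needs no measurability in $\gamma$ at all, and you dispose of part (2) without any Fubini-type step by noting that Theorem \ref{thm:main}(1) already holds for \emph{every} $z$ once $\lambda\in\mathcal E$, which is a full-measure condition. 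Both routes are valid; yours is slightly more economical on hypotheses, while the paper's single function $f_{\lambda,\bj}$ packages both regimes of $\gamma$ into one Fubini application. The sandwich step you flag as delicate is fine: $\phi(\lambda,\cdot)$ is continuous and increasing on $(\tfrac1{2\lambda},1)$ since $2\log2+\log\lambda=\log(4\lambda)>0$ and $\log(2/\gamma)>\log 2>0$ is decreasing in $\gamma$, and your choice of $D_\lambda$ guarantees Theorem \ref{thm:main}(3) is only ever invoked with $\lambda\in[\tfrac1{2\gamma},\bar\lambda)$.
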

\begin{proof}
For $\lambda\in (\frac{1}{2},\overline{\lambda})$ and $\bj\in\Sigma$ we define $f_{\lambda,\bj}:(0,1)\to[0,\infty)$ by
$$f_{\lambda, \bj}(\gamma)=\left\{\begin{array}{lll}\frac{\log 2}{-\log (\lambda\gamma)}&\text{ if }&\gamma\in (0,\frac{1}{2\lambda})\\
\frac{2\log 2+\log \lambda}{\log (2/\gamma)}&\text{ if }&\gamma\in [\frac{1}{2\lambda},1)\end{array}\right.$$
We have by Theorem \ref{thm:main} that for all $\gamma\in (0,1)$
$$\mathcal{L}\times\nu(\{(\lambda,\bj)\in (\frac{1}{2},\overline{\lambda})\times\Sigma:\dimH R^*(\pi(\bj), \lambda, \gamma)<f_{\lambda,\bj}(\gamma)\})=0.$$
Therefore by Fubini for Lebesgue almost every $\lambda$ and $\nu$ almost all $\bj$ we have that
$$\mathcal{L}(\{\gamma\mid\dimH R^*(\pi(\bj), \lambda, \gamma)<f_{\lambda,\bj}(\gamma)\})=0.$$
The result of the corollary now follows by the continuity of $f_{\lambda,\bj}$, the fact that $\onf=\pi_*\nu$ and the fact that the sets $R^*(z, \lambda, \gamma)$ are monotone in $\gamma$.

\end{proof}

In our analysis, we will need to consider the sets $R=\pi^{-1}(R^*(z, \lambda, \gamma))$. Since the shift map is topologically conjugate to $E$ via the projection $\pi$, we have 
\[
R=\{\bi\in \Sigma\mid \sigma^n(\bi)\in \pi^{-1}(Q(z, \gamma^n)) \textrm{ i.o.}\}
\]
We can also consider the shrinking target set defined purely from the perspective of the coding space. We fix $\bz\in \Sigma, \gamma\in (0,1)$ and let $(\ell_n)\subset \N$ be an increasing sequence. We can then consider symbolic shrinking target sets 
\[
R(\bz, (\ell_n)) :=\limsup_{n\to\infty}\{\bi\mid(i_{n+1},\ldots,i_{n+\ell(n)})=(z_1,\ldots,z_{\ell_n})\}. 
\] 
For Hausdorff dimension questions, we can then consider $\pi_{\lambda}(R(\bz, (\ell_n)))$. We will be able to adapt the proof of parts (1) and (2) of Theorem \ref{thm:main} to find the dimensions of some $\pi_{\lambda}(R(\bz, (\ell_n)))$. 

\begin{theorem}\label{thm:dynamical}
Let
\[
\ell_n= \lceil n\frac{\log \lambda}{\log \gamma}\rceil. 
\]
\begin{enumerate}
\item[(1)] 
For all $\lambda\in (\frac{1}{2},\frac{1}{2\gamma})\cap\mathcal{E}$ and for all $\bz\in\Sigma$ we have that
\[
\dimH \pi( R(\bz, (\ell_n)) ) = \frac{-\log 2}{\log(\lambda\gamma)}.
\]
\item[(2)] 
For all $\lambda\in (\frac{1}{2\gamma},1)\cap\mathcal{E}$ we have that

\[
\dimH \pi( R(\bz, (\ell_n)) ) = 2+\frac{\log \lambda}{\log 2}-\frac{\log\gamma}{\log(\gamma\lambda)}. 
\]
\end{enumerate}
\end{theorem}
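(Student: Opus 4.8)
The plan is to reduce the symbolic shrinking target set $R(\bz, (\ell_n))$ to a structure close enough to the geometric shrinking target set $R^*(z,\lambda,\gamma)$ that the arguments used for parts (1) and (2) of Theorem \ref{thm:main} apply almost verbatim. The key observation is the choice $\ell_n = \lceil n \tfrac{\log\lambda}{\log\gamma}\rceil$: since $\tfrac{\log\lambda}{\log\gamma} > 0$, a cylinder of length $\ell_n$ in the coding space projects under $\pi$ (or under $\pi_I$ in the $x$-direction) to a set of $x$-diameter comparable to $\lambda^{\ell_n} \asymp \gamma^n$, which matches the target scale $Q(z,\gamma^n)$. So $\pi^{-1}(Q(z,\gamma^n))$ and the symbolic cylinder $\{\bi \mid (i_{n+1},\dots,i_{n+\ell_n}) = (z_1,\dots,z_{\ell_n})\}$ describe essentially the same scale of return, modulo the multiplicity with which $\pi_I$ identifies codings — which is exactly why the unique-expansion hypothesis disappears from the statement here: a single cylinder is prescribed rather than a geometric ball which may meet many cylinders.

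First I would prove the upper bound. For each $n$, the set $E^{-n}$ of the prescribed length-$\ell_n$ cylinder, intersected with $F$, is a union of $2^n$ copies of $f_{z_1\cdots z_{\ell_n}}(F)$, each of which has $x$-width $\asymp \lambda^{n+\ell_n}$ and $y$-height $2^{-n}$. Covering each such piece by boxes of the smaller side length $\lambda^{n+\ell_n}$ (which one checks is the $x$-width, since $\lambda^{\ell_n} \asymp \gamma^n$ and $\gamma\lambda < 1$ forces $\lambda^{n+\ell_n} < 2^{-n}$ in case (1), while in case (2) one must be more careful and split according to which side dominates) gives a cover whose $s$-dimensional mass one sums over $n \geq N$. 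Optimizing $s$ so this sum stays bounded as $N\to\infty$ yields the claimed dimensions: in case (1) the box count is governed purely by the $x$-direction and a geometric-series computation gives $s = \tfrac{-\log 2}{\log(\lambda\gamma)}$; in case (2), where $2^{-n}$ is the smaller side, one covers by squares of side $2^{-n}$, contributing the ``generic slice'' dimension $2 + \tfrac{\log\lambda}{\log 2}$ of $F$ itself plus the entropy cost $-\tfrac{\log\gamma}{\log(\gamma\lambda)}$ of specifying the return word, exactly as in Theorem \ref{thm:main}(2). This is where the exponential separation hypothesis $\lambda\in\mathcal E$ enters, via Theorem \ref{thm:pablo}, to control the dimension of the projected set $\pi_I$ of the relevant cylinder structure uniformly.

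For the lower bound I would build a Cantor subset and a mass distribution on it along a rapidly increasing subsequence $(n_m)$ with $n_{m+1}/n_m \to \infty$, exactly in the spirit of the sketch in the introduction. One fixes the digits $(i_{n_m+1},\dots,i_{n_m+\ell_{n_m}}) = (z_1,\dots,z_{\ell_{n_m}})$ to force membership in $R(\bz,(\ell_n))$, and distributes the remaining ``free'' digits using the Bernoulli measure $\nu$ (in case (2)) or a suitably reweighted measure concentrating on the $x$-direction (in case (1)); because the gaps $n_{m+1} - (n_m + \ell_{n_m})$ are enormous, the forced blocks cost negligible entropy and the local dimension of the push-forward measure under $\pi$ at a typical point is the claimed value — one estimates $\liminf \tfrac{\log \mu(B(x,r))}{\log r}$ by comparing balls to the natural approximate squares of $F$ and invoking the uniform lower local-dimension bound for $\ber$ from Theorem \ref{thm:pablo}. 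Then the mass distribution principle gives the lower bound.

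The main obstacle I expect is \emph{not} the symbolic bookkeeping but the same geometric issue that makes the affine case hard throughout the paper: passing between symbolic cylinders and genuine metric balls/cubes in $F$ when estimating $\mu(B(x,r))$ for the mass distribution, since the pieces $f_{\bi}(F)$ are long thin rectangles whose $x$-projections overlap in a way controlled only by the Bernoulli convolution. Concretely, for a ball of radius $r$ with $\lambda^{n+\ell_n} \ll r \ll 2^{-n}$ one must count how many level-$n$ rectangles it meets, and this count is governed by the local dimension of $\ber$ at $\proj_1(x)$ — which is why the result is cleanest on $\mathcal E$ where Theorem \ref{thm:pablo} gives a uniform, pointwise-everywhere bound. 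Making this rectangle-counting rigorous and uniform, and checking that the two regimes $\lambda < \tfrac{1}{2\gamma}$ and $\lambda > \tfrac{1}{2\gamma}$ really do correspond to the ``$x$-direction dominates'' versus ``$y$-direction dominates'' dichotomy in both bounds, is the technical heart; once that is in place the optimization over $s$ and the measure construction are routine and parallel to the proof of Theorem \ref{thm:main}(1)--(2).
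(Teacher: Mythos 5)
Your overall strategy is the same as the paper's: for the upper bound, cover the $n$-th return set by boxes at the appropriate scale and sum over $n\ge N$; for the lower bound, build a Cantor set along a sparse subsequence $(n_m)$ with the target blocks forced and the remaining digits uniformly distributed, then run the mass distribution principle using Corollary \ref{cor:branching} (i.e.\ Theorem \ref{thm:pablo}) to count how many level-$k$ cylinders a ball can meet. In fact the paper gets Theorem \ref{thm:dynamical} almost for free from the machinery of Theorem \ref{thm:main}: for part (1) the inclusion $\pi(R(\bz,(\ell_n)))\subseteq R^*(z,\lambda,\gamma)$ gives the upper bound instantly, and the Cantor sets constructed in Sections 5 and 6 already sit inside $\pi(R(\bz,(\ell_n)))$, so both lower bounds transfer verbatim; you are rebuilding this from scratch, but with the same ingredients. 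Your observation about why the unique-expansion hypothesis disappears (a single cylinder is prescribed, so the branching count $N_{\ell_2(n)}$ is replaced by $1$) is exactly the point of Section \ref{subsec:uppercase2}. One aside: in case (1) no ``reweighted measure concentrating on the $x$-direction'' is needed --- the uniform $(\tfrac12,\tfrac12)$ weights on the free digits already give the correct local dimension, with the three ranges of $r$ handled by Lemmas \ref{boundby1}--\ref{Pablobound}.

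There is, however, one concrete error in your upper-bound bookkeeping. Each piece of $E^{-n}$ of the prescribed cylinder is $f_{\bi}(f_{\bz|_{\ell_n}}(F))$ with $|\bi|=n$, which has $x$-width $\lambda^{n+\ell_n}$ but $y$-height $2^{-(n+\ell_n)}$, not $2^{-n}$ as you state. Consequently in case (2) the correct covering scale is $2^{-(n+\ell_n)}$, giving $2^{2n+\ell_n}\lambda^{n+\ell_n}$ squares in total and, via Remark \ref{rem:value}, the stated exponent. Covering by squares of side $2^{-n}$ as you propose would give $2^{2n}(\lambda\gamma)^n$ squares and hence only the exponent $2+\frac{\log(\lambda\gamma)}{\log 2}$, which is strictly larger than $2+\frac{\log\lambda}{\log 2}-\frac{\log\gamma}{\log(\gamma\lambda)}$ whenever $\lambda\gamma>\tfrac12$, so the upper bound would not match the lower bound. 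The same height error would also break case (1) if taken literally, since a single cube of side $(\lambda\gamma)^n$ does not contain a rectangle of height $2^{-n}$ when $\lambda\gamma<\tfrac12$; with the correct height $2^{-(n+\ell_n)}\le\lambda^{n+\ell_n}$ one cube does suffice and the computation closes. Once this scale is corrected, the rest of your outline goes through as in the paper.
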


We complete this section by outlining how the rest of the article is structured. Section 3 contains a collection of results we will need on $\lambda$-expansions and Bernoulli convolutions. Section 4 contains the proofs of all the upper bounds for both Theorem \ref{thm:main} and Theorem \ref{thm:dynamical}. In section 5 we prove the lower bound for parts 1 of both Theorem \ref{thm:main} and Theorem \ref{thm:dynamical}. In section 6 we prove the lower bounds for parts 2 of both Theorem \ref{thm:main} and Theorem \ref{thm:dynamical}. Finally in section 7 we give the proof of the lower bound for part 3 of Theorem \ref{thm:main}.

\section{Preliminaries}\label{sec:prels}

In the following, the notation $f\lesssim g$ means that $f\le Cg$ for some constant $C$. 
\begin{definition}\label{def:alltheells}
Fix $n, r\in \N$ to set the following notation: 
\begin{itemize}
\item $\ell_1(n)\in\N$ to be the smallest integer such that $(\tfrac 12)^{\ell_1(n)}\leq\gamma^n$.
\item $\ell_2(n)\in\N$ to be the smallest integer with $\lambda^{\ell_2(n)}\leq\gamma^n$
\item $k(r)\in\N$ to be the smallest integer with  $\lambda^{k(r)}\leq(\tfrac 12)^r$
\end{itemize}
These will be used repeatedly throughout, with the exception of Section \ref{sec:uniquelower} where the notation is simplified to $\ell_n:=\ell_2(n)$ as the parameter $\ell_1$ is not needed. Further, in Section \ref{sec:case3}, where multiple parameters $\lambda$ are considered, the notation $\ell_2(n, \lambda):=\ell_2(n)$ is used to emphasise the dependence. 
\end{definition}
Note that as $n,r\to \infty$
\[
\frac{\ell_1(n)}{n}\to \frac{\log\gamma}{-\log 2}, \frac{\ell_2(n)}{n}\to \frac{\log\gamma}{\log\lambda}, \textrm{ and }\frac{k(r)}{r}\to \frac{\log 2}{-\log\lambda}. 
\]
Let $z\in F$ and $\gamma\in (0,1)$. Recall that the targets are given by  $Q_n=Q(z, \gamma^n)$. Denote by $R_n\subset \Sigma$ the set of those symbolic points which return at time $n$, that is,
\[
R_n=\{\bi\in \Sigma\mid \sigma^n(\bi)\in \pi^{-1}(Q_n)\}. 
\]
Then notice that $R=\limsup_{n\to \infty} R_n$, so that for all $N$, 
\begin{equation}\label{eq:cover}
R\subset \bigcup_{n\ge N}R_n. 
\end{equation}
With reference to $E:F\to F$, we define
\[
R_n^*=\{x\in F\mid E^n(x)\in Q_n\}\subset F, 
\]
so that for all $N$, 
\begin{equation}\label{eq:cover*}
R^*=\limsup_{n\to \infty} R_n^*\textrm{ and }R^*\subset \bigcup_{n\ge N}R^*_n .  
\end{equation}

The following lemma will be used to connect symbolic sequences in $\Sigma$ to points of $F$. 

\begin{lemma}\label{agreement}
Let $\bi,\bj\in\Sigma$ and $\lambda\in (1/2,1)$. There exists $0<C<1$ such that if $|\bi\wedge\bj|=r$ then $d(\pi(\bi),\pi(\bj))\geq C2^{-r}$.
\end{lemma}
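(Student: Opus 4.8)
The plan is to exploit the fact that the $y$-coordinate of $\pi(\bi)$ depends only on $\bi$ through the dyadic expansion $\sum_{k\ge 1} i_k 2^{-k}$, while the $x$-coordinate is governed by the $\lambda$-contraction and is irrelevant for the lower bound we want. Concretely, write $\pi(\bi)=(x_{\bi},y_{\bi})$ where $y_{\bi}=\sum_{k=1}^\infty i_k 2^{-k}$, since the second coordinate map of each $f_i$ is $y\mapsto y/2$ or $y\mapsto y/2+1/2$. If $|\bi\wedge\bj|=r$, then $i_k=j_k$ for $k\le r$ and $i_{r+1}\ne j_{r+1}$, so $|y_{\bi}-y_{\bj}|=|\sum_{k\ge r+1}(i_k-j_k)2^{-k}|$. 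The leading term contributes $\pm 2^{-(r+1)}$ and the tail is bounded in absolute value by $\sum_{k\ge r+2}2^{-k}=2^{-(r+1)}$. This does not immediately give a lower bound because the worst case (e.g. $i_{r+1}=1,\ i_{\ge r+2}=0$ versus $j_{r+1}=0,\ j_{\ge r+2}=1$) makes $|y_{\bi}-y_{\bj}|$ as small as $0$; indeed such pairs exist, corresponding to the two dyadic expansions of a dyadic rational.

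To handle this, I would not try to bound $|y_{\bi}-y_{\bj}|$ alone, but rather note that when the $y$-coordinates nearly collide, the $x$-coordinates must be far apart, because the images $f_0(F)$ and $f_2(F)$ are disjoint (strong separation in the $x$-direction: $f_0([0,1]^2)$ lies in $[0,\lambda]\times[0,1/2]$ and $f_2([0,1]^2)$ in $[1-\lambda,1]\times[1/2,1]$, and these $x$-intervals overlap only on $[1-\lambda,\lambda]$ while the $y$-intervals are $[0,1/2]$ and $[1/2,1]$). More efficiently: apply $f_{\bi\wedge\bj}^{-1}$, which is a bijective affine map scaling distances in the $x$-direction by $\lambda^{-r}$ and in the $y$-direction by $2^{r}$; so $d(\pi(\bi),\pi(\bj))\ge \min(\lambda^r,2^{-r})\, d(\pi(\sigma^r\bi),\pi(\sigma^r\bj)) = 2^{-r} d(\pi(\sigma^r\bi),\pi(\sigma^r\bj))$ since $\lambda>1/2$. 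Now $\sigma^r\bi$ and $\sigma^r\bj$ start with different symbols, so $\pi(\sigma^r\bi)\in f_{i_{r+1}}(F)$ and $\pi(\sigma^r\bj)\in f_{j_{r+1}}(F)$ with $i_{r+1}\ne j_{r+1}$. The two sets $f_0(F)$ and $f_2(F)$ are compact and have strictly positive distance from each other — here one uses that $F\subset[0,1]^2$ lies in the bottom and top halves: $f_0(F)\subset \mathbb R\times[0,1/2]$, $f_2(F)\subset\mathbb R\times[1/2,1]$, but this only gives distance $0$; the real point is that $F$, being self-affine with second coordinate an arbitrary dyadic-type expansion, is a proper closed subset, so $f_0(F)$ and $f_2(F)$ are disjoint compact sets. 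Thus $C:=d(f_0(F),f_2(F))>0$ and $d(\pi(\bi),\pi(\bj))\ge C\,2^{-r}$; shrinking $C$ if necessary so that $C<1$ gives the claim.

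The main obstacle is precisely the verification that $f_0(F)$ and $f_2(F)$ are disjoint, equivalently that $d(f_0(F),f_2(F))>0$ — this is what fails for the naive $y$-coordinate estimate. I expect this to follow from the structure recorded in Section \ref{sec:set-up}: $F$ is contained in the strip $[0,1]\times[0,1]$ with $f_0(F)$ having $y$-coordinates in $[0,1/2]$ and $f_2(F)$ in $[1/2,1]$, and a point with $y$-coordinate exactly $1/2$ lies in $f_0(F)$ only if it comes from $\bi$ with $i_1=0$ and tail coding $y$-coordinate $1$, i.e. $\bi=0111\ldots$, while it lies in $f_2(F)$ only from $\bj=1000\ldots$; these two points have $x$-coordinates $\lambda\cdot\pi_I(111\ldots)=\lambda$ and $(1-\lambda)+\lambda\cdot\pi_I(000\ldots)=1-\lambda$ respectively, which differ since $\lambda>1/2$. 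A compactness argument then upgrades this to a uniform positive gap. Once $C$ is fixed this way, the scaling argument above is immediate and the lemma follows.
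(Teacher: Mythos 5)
Your proof is correct, but it takes a genuinely different route from the paper's. The paper argues directly on the two coordinates: assuming without loss of generality $i_{r+1}=1$ and $j_{r+1}=0$, it fixes $n$ with $(1-\lambda)\lambda+\lambda^{n+1}<\lambda^{2}-\lambda^{n+1}$ and runs a dichotomy --- either $\bigl|\sum_{k>r}(i_k-j_k)2^{-k}\bigr|\ge 2^{-(r+n)}$, so the $y$-coordinates already separate at scale $2^{-n}\cdot 2^{-r}$, or the digits in positions $r+1,\dots,r+n$ are forced into the complementary patterns $(1,0,\dots,0)$ versus $(0,1,\dots,1)$, in which case an explicit estimate on the $\lambda$-expansions shows the $x$-coordinates differ by at least $C_1\lambda^{r}\ge C_1 2^{-r}$. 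You instead conjugate by $f_{\bi|_r}^{-1}$, noting that $f_{\bi|_r}$ contracts distances by at least the factor $\min(\lambda^{r},2^{-r})=2^{-r}$ (this uses $\lambda>\tfrac12$), and thereby reduce the lemma to the strong separation statement $d(f_0(F),f_1(F))>0$. Your verification of disjointness is sound: the only candidates for a common point have $y$-coordinate exactly $\tfrac12$, and these are the two distinct points $(\lambda,\tfrac12)$ and $(1-\lambda,\tfrac12)$, after which compactness gives a uniform gap. Both arguments ultimately exploit the same phenomenon --- near-collision of the $y$-coordinates forces the digit strings into the patterns $10\cdots$ versus $01\cdots$, whence the $x$-coordinates separate because $\lambda>1-\lambda$ --- but your version is softer and more conceptual, delivering a non-explicit constant via compactness, while the paper's computation yields the explicit value $C=\min\{2^{-n},C_1\}$. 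Either way the lemma is fully established.
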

\begin{proof}
Fix $\bi,\bj\in\Sigma$, $\lambda\in (1/2,1)$ and $r\in\N$ such that $|\bi\wedge\bj|=r$. We will assume that $\bi_{r+1}=1$ and $\bj_{r+1}=0$.
Since $\lambda>1/2$ we can find $n\in\N$ such that 
$$(1-\lambda)\lambda+\lambda^{n+1}<\lambda^2-\lambda^{n+1}$$
and we set
$$C_1=\lambda^2-\lambda^{n+1}-((1-\lambda)\lambda+\lambda^{n+1})>0.$$
Note that if
$$\sum_{k=r+1}^{\infty} (\bi_k-\bj_k)2^{k}\geq 2^{-(r+n)}$$
then $d(\pi(\underline{i}),\pi(\underline{j})\geq 2^{-(r+n)}$. So we now suppose that 
$$\sum_{k=r+1}^{\infty} (\bi_k-\bj_k)2^{k}\leq 2^{-(r+n)}.$$
In this case we have that $(\bi_{r+1},\ldots,\bi_{r+n})=(1,0,\ldots,0)$ and $(\bj_{r+1},\ldots,\bj_{r+n})=(0,1,\ldots,1)$.
Thus
\begin{eqnarray*}
|\pi(\bi)-\pi(\bj)|&\geq& \lambda^{r+2}-\lambda^{r+n+1}-(1-\lambda)\lambda^{r+1}-\lambda^{r+n+1}\\
&=&\lambda^r(\lambda^2-\lambda^{n+1}-((1-\lambda)\lambda+\lambda^{n+1})\\
&\geq&C_1\lambda^r\geq C_12^{-r}
\end{eqnarray*}
Thus if we let $C=\min \{2^{-n},C_1\}$ we have
$$d(\pi(\underline{i}),\pi(\underline{j})\geq C2^{-r}.$$
\end{proof}

\begin{remark}\label{rem:structure}
Geometrically, the points in $R^*_n$ consist of the $2^n$ pre-images of $Q_n$ under $E$, each of which is contained a rectangle of sides $(\tfrac 12)^n\gamma^n$ and $\lambda^n\gamma^n$.

The above Lemma \ref{agreement} implies that if for some $x=\pi(\bx), y=\pi(\by)\in F$, $r\in \N$, we have $y\in Q(x, C2^{-r})$, then $|\by\wedge \bx|\ge r$. It follows that, up to constants, the structure of any point $\bi \in R_n$ is such that there is a free choice of the first $n$ symbols, followed by $\ell_1(n)$ symbols which agree with the centre $\bz$, followed by an ambiguous region, where $\sigma^n(\bi)$ has to map under $\pi_I$ close enough to $\pi_I(\bz)$ but it does not have to agree with it, i.e. we need to know that 
\begin{equation}\label{eq:closetox}
|\pi_I(\sigma^n(\bi) ) - \pi_I(\bz)|\le \gamma^n, 
\end{equation}
but there are (usually) more than one such $\bi$. 

Finally, notice that if $\bi$ is such that $\pi [\sigma^n(\bi)|_{\ell_2}]\cap Q(z, \gamma^n)\neq \emptyset$, then every $\bj\in [\sigma^n(\bi)|_{\ell_2}]$satisfies $\pi(\bj)\in Q(z, 2\gamma^n)$. Hence being an element of $R_n$ only affects the first $\ell_2(n)+n$ digits (up to constants). 
\end{remark}

The following Theorem is due to Shmerkin and will be the form in which the exponential separation condition of Definition \ref{def:expsep} is applied in practice. 

\begin{theorem}[Theorem 1.5 in \cite{Shmerkin}]\label{thm:pablo}
Choose $\lambda\in \mathcal E$, and let $\epsilon>0$. There is  $C_1>0$ such that for all $x\in [0,1]$ and $r>0$, 
\[
{\bar \nu}_{\lambda}(B(x,r))\le C_1r^{1-\epsilon}. 
\]

\end{theorem}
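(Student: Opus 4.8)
The assertion is a \emph{uniform} Frostman-type bound on the Bernoulli convolution $\ber$ with exponent arbitrarily close to the optimal value $1$; equivalently it says that the $L^\infty$-dimension (upper Frostman exponent) of $\ber$ equals $1$. The plan is to deduce it from the much stronger fact that \emph{all} of the $L^q$-dimensions of $\ber$ are equal to $1$ when $\lambda\in\mathcal E$. Recall that for $q\in(1,\infty)$ the $L^q$-dimension is
\[
D_q(\ber)=\liminf_{n\to\infty}\frac{1}{(q-1)n}\log_2\frac{1}{\sum_{Q\in\mathcal D_n}\ber(Q)^q},
\]
where $\mathcal D_n$ is the partition of $[0,1]$ into dyadic intervals of length $2^{-n}$. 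The genuinely hard input is that $D_q(\ber)=\min\{1,1\}=1$ for every $q\in(1,\infty)$; everything else is elementary.

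\emph{Step 1: from $L^q$-dimensions to the pointwise bound.} Fix $q>1$ and $\delta>0$. Since $D_q(\ber)=1$, there is $n_0$ with $\sum_{Q\in\mathcal D_n}\ber(Q)^q\le 2^{-(1-\delta)(q-1)n}$ for all $n\ge n_0$. A single term is bounded by the whole sum, so $\ber(Q)\le 2^{-(1-\delta)\frac{q-1}{q}n}$ for every $Q\in\mathcal D_n$, $n\ge n_0$. Any ball $B(x,r)$ with $2^{-n-1}\le r<2^{-n}$ meets at most three members of $\mathcal D_n$, and $2^{-n}\le 2r$, so $\ber(B(x,r))\lesssim r^{(1-\delta)\frac{q-1}{q}}$. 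Given $\epsilon>0$, first choose $q>2/\epsilon$ so that $\frac{q-1}{q}>1-\tfrac\epsilon2$, then $\delta$ small enough that $(1-\delta)\frac{q-1}{q}>1-\epsilon$; this gives $\ber(B(x,r))\lesssim r^{1-\epsilon}$ for all $x$ and all $r\le 2^{-n_0}$, and the trivial bound $\ber(B(x,r))\le 1$ handles larger $r$ after enlarging the constant. (Conceptually, the inequalities $\frac{q-1}{q}D_q(\ber)\le D_\infty(\ber)\le D_q(\ber)$ force $D_\infty(\ber)=1$, which is precisely the claim.)

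\emph{Step 2: why $D_q(\ber)=1$.} This is Shmerkin's theorem, and to prove it I would follow his strategy. Write $\ber^{(n)}$ for the distribution of the finite sum $\sum_{k=1}^n\epsilon_k(1-\lambda)\lambda^{k-1}$, $\epsilon_k\in\{0,1\}$ i.i.d.\ uniform; it has at most $2^n$ atoms, and $\lambda\in\mathcal E$ guarantees that distinct atoms are $\ge\lambda^{Cn}$ apart for some $C$ (this is exactly where membership in $\mathcal E$, i.e.\ the non-vanishing rate of the polynomials in $\mathcal P_n$, is used). The measure $\ber$ obeys the self-convolution identity $\ber=\ber^{(n)}\ast(S_{\lambda^n})_*\ber$, where $S_t$ denotes scaling by $t$; iterating expresses $\ber$ as a convolution of arbitrarily many rescaled finite pieces. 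The engine is an \emph{inverse theorem for the $L^q$-norms of convolutions}: if $\|\mu\ast\theta\|_q$ at scale $2^{-m}$ is not significantly smaller than $\|\mu\|_q$, then $\mu$ must be close to uniform, at most scales $\le m$, precisely on the scales where $\theta$ is spread out. Feeding the self-similar structure into this dichotomy shows that the scale-$2^{-m}$ $L^q$ quantities cannot stabilise at a value corresponding to dimension $<1$ unless the atoms of $\ber^{(n)}$ fail to be exponentially separated, i.e.\ unless $\lambda\notin\mathcal E$; hence $D_q(\ber)=1$.

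\emph{Main obstacle.} Step 1 and the combinatorial/algebraic inputs of Step 2 (the convolution identity and translating $\lambda\in\mathcal E$ into an atom-separation rate) are routine. The real difficulty is the $L^q$ inverse theorem underpinning Step 2: Hochman's inverse theorem supplies the analogous statement for Shannon entropy (the $q\to1^+$ endpoint, which already yields $\dimH\ber=1$), but the passage to genuine $L^q$ norms for $q>1$ requires a substantial multiscale analysis and careful control of exceptional scales. I would therefore present Step 1 in full and invoke \cite{Shmerkin} for Step 2, which is how Theorem~\ref{thm:pablo} is used in the sequel.
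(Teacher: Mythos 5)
The paper offers no proof of this statement at all: it is quoted verbatim as Theorem 1.5 of Shmerkin's paper, which is exactly the external input you also end up invoking. Your Step 1 (passing from $D_q(\ber)=1$ for all $q>1$ to the uniform Frostman bound via $\frac{q-1}{q}D_q\le D_\infty$) is a correct and standard reduction — it is essentially how Shmerkin himself derives this corollary from his $L^q$-dimension theorem — so your proposal is consistent with, and slightly more informative than, the paper's treatment.
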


Theorem \ref{thm:pablo} has the following straightforward corollary on the number of sequences $\bi\in \Sigma_k$ that can get close to a given point $x\in [0,1]$ when projected by $\pi_I$.  This result is utilised many times over the course of the proofs, as is implied by Remark \ref{rem:structure}. 

\begin{corollary}\label{cor:branching}
Let $\lambda\in\mathcal{E}\cap (1/2,1)$,  $\rho>0$ and $\epsilon\in (0,1)$. There is a constant $C_1>0$ such that for all $x\in [0,1]$ and $k\in\N$
\[
N_{k}(x)=\#\{\bi \in \Sigma_{k}\mid \pi_I[\bi]\cap [x-\rho\lambda^k,x+\rho\lambda^k]\neq \emptyset\}\le C_12^k\lambda^{k(1-\epsilon)}
\] 
\end{corollary}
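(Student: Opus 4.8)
The plan is to deduce Corollary \ref{cor:branching} from Theorem \ref{thm:pablo} by a simple covering-and-counting argument. The key observation is that the cylinders $\pi_I[\bi]$ for $\bi \in \Sigma_k$ are, up to controlled overlaps, the natural generation-$k$ pieces of the Bernoulli convolution, each of length $\lambda^k$, and each carrying $\bar\nu_\lambda$-mass exactly $2^{-k}$. So if many of them intersect a short interval $J = [x-\rho\lambda^k, x+\rho\lambda^k]$, they all lie inside a slightly enlarged interval, whose $\bar\nu_\lambda$-measure is bounded above by Theorem \ref{thm:pablo}; comparing this with the mass contributed by the cylinders gives the count.

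In detail, first I would fix $\lambda \in \mathcal E \cap (1/2,1)$, $\rho>0$, $\epsilon \in (0,1)$, and let $C_1$ be the constant from Theorem \ref{thm:pablo} for this $\lambda$ and this $\epsilon$. Next, observe that for any $\bi \in \Sigma_k$ the set $g_{\bi}([0,1]) = \pi_I[\bi]$ is an interval of length $\lambda^k$, and that $\bar\nu_\lambda(\pi_I[\bi]) \ge \nu([\bi]) = 2^{-k}$ since $\pi_I([\bi]) \supseteq \pi_I$ of the cylinder $[\bi]$ and $(\pi_I)_*\nu = \bar\nu_\lambda$. Now suppose $\bi \in \Sigma_k$ has $\pi_I[\bi] \cap J \neq \emptyset$; since $\diam \pi_I[\bi] = \lambda^k$ and $\diam J = 2\rho\lambda^k$, we get $\pi_I[\bi] \subseteq B(x, (\rho + \tfrac12 + \tfrac12)\lambda^k) \subseteq B(x, (\rho+1)\lambda^k) =: J'$.

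The counting step is then: the intervals $\{\pi_I[\bi] : \bi \in \Sigma_k\}$ need not be disjoint, but they have bounded overlap — any point of $[0,1]$ lies in at most a bounded number $M$ of them (indeed, for $\lambda < 1$ fixed this follows since the left endpoints of distinct generation-$k$ cylinders are $\lambda^k$-separated in a way controlled by $1-\lambda$, or one can simply bound the multiplicity by a constant depending only on $\lambda$). Hence
\[
N_k(x) \cdot 2^{-k} \le \sum_{\substack{\bi \in \Sigma_k \\ \pi_I[\bi]\cap J \neq \emptyset}} \bar\nu_\lambda(\pi_I[\bi]) \le M\, \bar\nu_\lambda(J') \le M C_1 \big((\rho+1)\lambda^k\big)^{1-\epsilon},
\]
which rearranges to $N_k(x) \le M C_1 (\rho+1)^{1-\epsilon}\, 2^k \lambda^{k(1-\epsilon)}$, giving the claimed bound after renaming the constant.

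The only mildly delicate point — the main obstacle, such as it is — is justifying the bounded-overlap constant $M$ for the family of generation-$k$ cylinders $\pi_I[\bi]$, since without the strong separation condition these genuinely overlap. One clean way around this is to avoid $M$ entirely: instead of summing $\bar\nu_\lambda(\pi_I[\bi])$, note directly that each $\bi \in \Sigma_k$ counted satisfies $\bi \in \{\,\bj \in \Sigma_k : \pi_I[\bj] \subseteq J'\,\}$, and the cylinders $[\bi] \subseteq \Sigma$ are genuinely disjoint, so $N_k(x)\,2^{-k} = \nu\big(\bigcup \{[\bi] : \bi \text{ counted}\}\big) \le \nu\big(\pi_I^{-1}(J')\big) = \bar\nu_\lambda(J')$, and then Theorem \ref{thm:pablo} finishes it with no overlap constant at all. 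I would present this second version, as it is both shorter and fully rigorous. Everything else is a routine comparison of scales, using $\diam \pi_I[\bi] = \lambda^k$ and $\nu([\bi]) = 2^{-k}$.
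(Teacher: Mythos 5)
Your second version --- pulling back to the genuinely disjoint cylinders $[\bi]\subset\Sigma$ so that $N_k(x)\,2^{-k}\le\nu(\pi_I^{-1}(J'))=\bar\nu_\lambda(B(x,(\rho+1)\lambda^k))$ and then applying Theorem \ref{thm:pablo} --- is exactly the paper's proof, and your instinct to discard the first version was right, since the bounded-overlap claim for generation-$k$ cylinders is false for overlapping systems with $\lambda>1/2$ (the pointwise multiplicity can grow exponentially in $k$). As presented, the argument is correct, with the harmless factor $(\rho+1)^{1-\epsilon}$ absorbed into the constant.
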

\begin{proof}
Notice that for any $|\bi|=k$, if $\pi_I[\bi]\cap  [x-\rho\lambda^k,x+\rho\lambda^k]\neq\emptyset$, then 
\[
\pi_I[\bi]\subset [x-(\rho+1)\lambda^k,x+(\rho+1)\lambda^k]
\]
Thus
$$\overline{\nu}_{\lambda} ([x-(\rho+1)\lambda^k,x+(\rho+1)\lambda^k])\geq N_{k}(x)2^{-k}$$ 
By applying Theorem \ref{thm:pablo} we get that  there exists a $C_1>0$ such that
$$\overline{\nu}_\lambda([x-(\rho+1)\lambda^k,x+(\rho+1)\lambda^k])\leq C_1(\lambda^{k(1-\epsilon)}.$$
Thus
$$
N_k(x)\leq C_1(\lambda^{k(1-\epsilon)}2^{k}=C_12^k\lambda^{(1-\epsilon)}.
$$
\end{proof}

In the case of $z$ with a unique expansion, we can of course expect a better bound. 
\begin{lemma}\label{uelocaldim}
Let $\bj$ be the unique $\lambda$-expansion of some $\overline{\pi}_{\lambda}(\bj)$. We have that
$$\lim_{k\to\infty} k^{-1}\log N_k(\overline{\pi_\lambda}(\bj))=0.$$
\end{lemma}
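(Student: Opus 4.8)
The plan is to show that, under the unique-expansion assumption on $\bj$, the number $N_k(\overline{\pi_\lambda}(\bj))$ of length-$k$ words whose cylinder under $\pi_I$ comes within $\rho\lambda^k$ of $x := \overline{\pi_\lambda}(\bj)$ grows subexponentially. The key point is a compactness argument in symbolic space: if there were exponentially many such words, then in particular there would be, for each $k$, at least two ``distant'' words $\bi, \bi'$ of length $k$ (agreeing only in a short prefix) both landing near $x$, and taking $k \to \infty$ we could extract from these a second infinite $\lambda$-expansion of $x$, contradicting uniqueness. Let me make this precise.

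First I would fix $\rho > 0$ and argue by contradiction: suppose $\limsup_{k\to\infty} k^{-1}\log N_k(x) = c > 0$. Fix $p \in \N$. For $k$ large along the relevant subsequence, $N_k(x) \geq 2^{ck/2} > 2^p$, so among the words of length $k$ whose cylinder meets $[x - \rho\lambda^k, x + \rho\lambda^k]$ there must be two, say $\bi^{(k)}$ and $\bj^{(k)}$, with $|\bi^{(k)} \wedge \bj^{(k)}| \leq p$ (otherwise all $N_k(x)$ words share a common prefix of length $p+1$, giving at most $2^{k-p-1} < 2^{ck/2}$ words once $k$ is large, since $c < 1$ — wait, one must be slightly careful, but more directly: if all such words agreed up to position $p$, there are only at most $2^{p}$ choices for the prefix, and we can instead phrase it as: either two of the $N_k(x)$ words disagree within the first $p$ symbols, or all of them agree on the first $p$ symbols; in the latter case I pass to the subword of length $p$ common to all, which forces $x$ to lie in a cylinder $\pi_I[\bw]$ of length $p$ for every $k$, hence for all $p$, which already begins to pin down $x$'s expansion). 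Let me instead run the cleaner version: if for infinitely many $k$ there exist two such words disagreeing before position $p$, extract (by the pigeonhole over the finitely many prefixes of length $p$, and then a diagonal/compactness argument over $k\to\infty$) infinite sequences $\bi, \bj \in \Sigma$ with $|\bi \wedge \bj| \leq p$ such that both $\pi_I[\bi^{(k)}|_k] \to x$ and $\pi_I[\bj^{(k)}|_k] \to x$; since $\diam \pi_I[\bw] \leq \lambda^{|\bw|} \to 0$, continuity of $\pi_I$ gives $\pi_I(\bi) = \pi_I(\bj) = x$ with $\bi \neq \bj$, contradicting that $\bj$ (equivalently $x$) has a unique $\lambda$-expansion. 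If instead for all large $k$ all $N_k(x)$ words agree on their first $p$ symbols, then $x$ lies in some fixed cylinder of length $p$; letting $p \to \infty$ (using that the cylinders are nested once we know $\pi_I$ is injective at $x$ — the length-$p$ prefix is forced to be $\bj|_p$) shows $N_k(x)$ is eventually bounded by the count of length-$k$ words extending $\bj|_p$, i.e.\ $N_k(x) \leq 2^{k-p}$, so $c \leq \log 2$; but in fact refining $p$ without bound here drives the bound down, and combined with the first case we conclude $c = 0$.

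The main obstacle is organizing the contradiction argument so that the diagonal extraction genuinely produces two \emph{distinct} limit sequences, i.e.\ controlling where the two words disagree uniformly in $k$. The clean way is: for each $p$, consider the ``bad event'' $B_p$ that for infinitely many $k$ there exist two length-$k$ words meeting the target with $|\bi \wedge \bj| \leq p$. If $B_p$ holds for some $p$, we get the contradiction with uniqueness as above. If $B_p$ fails for every $p$, then for each $p$ there is $K_p$ such that for all $k \geq K_p$, all words of length $k$ meeting $[x-\rho\lambda^k, x+\rho\lambda^k]$ share a prefix of length $> p$; since that common prefix must then equal $\bj|_{p+1}$ (as $\pi_I(\bj) = x$ and the cylinder $\pi_I[\bj|_{p+1}]$ contains $x$ in its closure, and is within distance $O(\lambda^{p+1})$ of any near-target word, so for $k$ also large it coincides), we get $N_k(x) \leq 2^{k - p - 1}$ for $k \geq \max(K_p, k_0(p))$. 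Taking $p$ arbitrarily large forces $\limsup_k k^{-1}\log N_k(x) \leq \limsup_k \tfrac{(k-p-1)\log 2}{k} = \log 2$ for every $p$ — which alone is not enough. So the correct finish is: the failure of $B_p$ for all $p$ is itself impossible unless $N_k(x)$ is eventually subexponential, because the common prefix of \emph{all} near-target words stabilizes to the infinite sequence $\bj$, and for the bound to be subexponential we need the prefix length to grow \emph{linearly} in $k$; this is exactly guaranteed by Corollary \ref{cor:branching} applied to the one branch, which already gives $N_k(x) \le C_1 2^k \lambda^{k(1-\epsilon)}$, and since the words all lie in a single length-$m_k$ cylinder with $m_k \to \infty$, a small modification shows $N_k(x) \le C_1 2^{k-m_k}\lambda^{(k-m_k)(1-\epsilon)}$-type estimates are not needed — rather, one argues directly that $\log N_k(x) = o(k)$ follows since for each fixed $p$ eventually $\log N_k(x) \le (k-p)\log 2 \cdot \mathbf{1}[\text{case 2}]$ together with the impossibility of case 1, and an interleaving/subsequence argument over all $p$ simultaneously yields $k^{-1}\log N_k(x)\to 0$. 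I expect the write-up to spend most of its effort on this last bookkeeping; the conceptual content is entirely the ``two distant near-target words at infinitely many scales $\Rightarrow$ two distinct expansions'' dichotomy.
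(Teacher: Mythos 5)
Your proposal does not match the paper's proof: the paper proves this lemma by a one-line citation to Corollary 8.5.5 of B\'ar\'any--Simon--Solomyak, so you are attempting a self-contained argument. Unfortunately the argument has a genuine gap, and it is exactly the one you flag midway through but never repair. Your dichotomy is sound as far as it goes: if for some $p$ there are infinitely many $k$ with two near-target words of length $k$ disagreeing before position $p$, a compactness/diagonal extraction does produce two distinct $\lambda$-expansions of $x$, contradicting uniqueness. Hence, writing $m(k)$ for the length of the longest common prefix of all words counted by $N_k(x)$, you correctly deduce $m(k)\to\infty$. But the only quantitative output of this is $N_k(x)\le 2^{\,k-m(k)}$ with \emph{no rate} on $m(k)$, which yields $\limsup_k k^{-1}\log N_k(x)\le \log 2$ and nothing more. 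To conclude the lemma you would need $m(k)=k-o(k)$, i.e.\ a linear-in-$k$ rigidity, and uniqueness of the expansion at the single point $x$ is a purely qualitative hypothesis that your argument never converts into such a rate.

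The two patches you suggest do not close this. Invoking Corollary \ref{cor:branching} gives $N_k(x)\le C_1(2\lambda^{1-\epsilon})^k$, which is exponentially large (since $2\lambda>1$) and holds for \emph{every} $x$, so it cannot be the ingredient that distinguishes unique-expansion points; and no ``interleaving over all $p$'' of the thresholds $K_p$ can upgrade $m(k)\to\infty$ to $m(k)/k\to 1$, since the $K_p$ may grow arbitrarily fast in $p$. The real content of the cited BSS corollary is precisely this quantitative step: one must control branching not just in an initial segment but throughout the word, e.g.\ by using that every shift $\sigma^n\bj$ is again the unique expansion of $\pi_I(\sigma^n\bj)$ together with a uniform (sub-additivity or semicontinuity) argument over all these shifted points. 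Without some such mechanism the proof does not go through; as written, the conceptual ``two distant words $\Rightarrow$ two expansions'' dichotomy proves strictly less than the lemma claims.
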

\begin{proof}
This is an immediate consequence of Corollary 8.5.5 in B\'{a}r\'{a}ny, Simon and Solomyak \cite{BaranySimonSolomyak23}. 
\end{proof}

As well as the exponential separation condition, in Section \ref{sec:case3} we will need to make use of transversality arguments. The transversality property of the parameterised family we consider can be expressed in many ways, but for our purposes a convenient form is given as \cite[Lemma 2.3]{PeresSolomyak98} which we quote underneath.  Recall Definition \ref{def:region} for the notation $\bar\lambda$. 
\begin{theorem}[From \cite{PeresSolomyak98}]\label{thm:transversality}
Fix any $[\lambda_0, \lambda_1]\subset (\tfrac12, \bar \lambda)$. Then, there is a constant $C>0$ such that for all $\rho>0$ and all $\bi, \bj\in \Sigma$ such that $\bi_1\neq \bj_1$, 
\[
\mathcal L\{\lambda\in [\lambda_0, \lambda_1]\mid |\sum_{k=1}^\infty (i_k-j_k)\lambda^k|<\rho\}\le C\rho. 
\]
\end{theorem}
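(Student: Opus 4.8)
The statement is the transversality estimate of Peres and Solomyak, and the plan is to recover it through the classical transversality machinery in three steps. First I would reduce it to a uniform statement about power series: writing $\Phi(\lambda)=\sum_{k\ge 1}(i_k-j_k)\lambda^k$ and using $\bi_1\neq\bj_1$, one factors out $(i_1-j_1)\lambda=\pm\lambda$ to obtain $\Phi(\lambda)=\pm\lambda\, g(\lambda)$, where $g(\lambda)=1+\sum_{n\ge1}b_n\lambda^n$ has coefficients $b_n=(i_1-j_1)(i_{n+1}-j_{n+1})\in\{-1,0,1\}$. Since $\lambda\ge\lambda_0$ throughout $J:=[\lambda_0,\lambda_1]$, it suffices to produce a constant $C'$ with $\mathcal L\{\lambda\in J:|g(\lambda)|<\rho\}\le C'\rho$ for \emph{every} $g$ in the (infinite) class $\mathcal B$ of such power series, uniformly.

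Second, I would introduce $\delta$-transversality: call $g$ \emph{$\delta$-transverse} on $J$ if, for all $\lambda\in J$, $g(\lambda)\le\delta$ forces $g'(\lambda)\le-\delta$. The elementary observation is that if $g$ is $\delta$-transverse then the sublevel set $\{\lambda\in J:g(\lambda)\le\delta\}$ is either empty or a terminal subinterval $[\lambda^*,\lambda_1]$ of $J$: once $g$ dips to level $\delta$ its derivative stays $\le-\delta$, so $g$ keeps decreasing and cannot climb back up (a short sup/continuity argument). On that subinterval $g$ decreases at speed at least $\delta$, so $\{\lambda\in J:|g(\lambda)|\le\rho\}$ has length at most $2\rho/\delta$ when $\rho\le\delta$, while for $\rho>\delta$ the bound by $|J|$ is trivial. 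Hence the whole theorem follows from $\delta$-transversality of every $g\in\mathcal B$ on $J$, for a single $\delta=\delta(\lambda_1)>0$.

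Third --- and this is the crux --- I would establish that uniform $\delta$-transversality; here the restriction $\lambda_1<\bar\lambda\approx0.668$ enters. The idea is to collapse the infinite family $\mathcal B$ to one explicit extremal case: an adversary trying to violate transversality at some $\lambda\le\bar\lambda$ wants $g$ small (hence an initial block of coefficients equal to $-1$) while keeping $g'$ not too negative (hence a tail of coefficients equal to $+1$), which suggests it is enough to check the functions $h_k(\lambda)=1-\sum_{j=1}^k\lambda^j+\frac{\lambda^{k+1}}{1-\lambda}$ and their partial sums --- a one-parameter list of explicit rational functions. One then shows there exist $\delta_0>0$ and a threshold, which is precisely the $\bar\lambda$ of Definition \ref{def:region} (the first place a double zero appears), such that each $h_k$, and therefore every $g\in\mathcal B$, is $\delta_0$-transverse on $[0,\bar\lambda)$; this reduces to a finite collection of polynomial inequalities on closed subintervals.

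I expect the third step to be the main obstacle. The combinatorial argument --- that an arbitrary $g\in\mathcal B$ violating transversality can be replaced by one of the extremal $h_k$ without loss --- must be carried out carefully, and the concluding estimate is a genuine (if elementary) numerical verification, which is what actually fixes the constant $\bar\lambda\approx0.668$. This is exactly the content of \cite[Lemma 2.3]{PeresSolomyak98}, building on \cite{Solomyak95} and refined in \cite{ShmerkinSolomyak06}.
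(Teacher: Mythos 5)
The paper does not prove this statement at all: it is quoted verbatim as \cite[Lemma 2.3]{PeresSolomyak98}, so there is no internal proof to compare against. Your reconstruction follows the standard route of the cited source -- factor out $\pm\lambda$ to reduce to the class of power series $1+\sum b_n\lambda^n$, $b_n\in\{0,\pm1\}$; observe that uniform $\delta$-transversality turns the sublevel set into a short terminal interval of length $\lesssim\rho/\delta$; and verify $\delta$-transversality below $\bar\lambda$ via the extremal $(*)$-functions / absence of double zeros -- and each of these steps is correct as sketched. The only caveat is that your third step, which you rightly flag as the crux, is not actually carried out but deferred to \cite{PeresSolomyak98} and \cite{ShmerkinSolomyak06}; that is exactly the same division of labour the paper itself adopts, so there is nothing to object to.
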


Sometimes we need to choose $\lambda$ and the centre $z$ in such a way that not only is $\bar\nu_\lambda(B(\pi_I(\bz), r)\ge r^{1+\epsilon}$ but also, so is $\bar \nu_\lambda(B(\pi_I(\sigma^k(\bz)), r)\ge r^{1+\epsilon}$. The following lemma will be applied to show that for a full measure set of $\lambda$, this happens in a set of full $\bar \nu_\lambda$-measure. 

\begin{lemma}\label{lem:msrballlower}
Fix $\epsilon,\xi>0$ and $[\lambda_0,\lambda_1]\subset [1/2,1)$. There is a set $A\subset [\lambda_0,\lambda_1]$ with $\mathcal{L}(A)\geq\lambda_1-\lambda_0-\epsilon$ such that for any $\lambda \in A$, for $\nu$-a.e. $\bi$, for all $n$ sufficiently large the measure
\[
\bar\nu_\lambda(B(\pi(\sigma^n(\bi)), \lambda^{\xi n})\ge \lambda^{\xi n(1+\epsilon)}.  
\]
\end{lemma}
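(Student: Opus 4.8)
The plan is to show that for most $\lambda$, the ``bad'' event that $\bar\nu_\lambda(B(\pi_\lambda(\sigma^n(\bi)),\lambda^{\xi n}))<\lambda^{\xi n(1+\epsilon)}$ happens for at most finitely many $n$, for $\nu$-a.e. $\bi$. We integrate over both $\lambda$ and $\bi$ and use a Fubini argument combined with Borel--Cantelli, so the central quantity to estimate is
\[
\int_{\lambda_0}^{\lambda_1}\int_\Sigma \mathbf 1\!\left\{\bar\nu_\lambda\bigl(B(\pi_\lambda(\sigma^n(\bi)),\lambda^{\xi n})\bigr)<\lambda^{\xi n(1+\epsilon)}\right\}\,\d\nu(\bi)\,\d\lambda .
\]
Fix $n$ and $\lambda$. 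Since $\pi_\lambda(\sigma^n(\bi))$ depends only on the tail $(i_{n+1},i_{n+2},\dots)$, whose distribution under $\nu$ is again $\nu$, the inner integral equals $\nu\{\bj:\bar\nu_\lambda(B(\pi_\lambda(\bj),\lambda^{\xi n}))<\lambda^{\xi n(1+\epsilon)}\}$, which is exactly $\bar\nu_\lambda$ of the set of points $x\in[0,1]$ with $\bar\nu_\lambda(B(x,r))<r^{1+\epsilon}$ for $r=\lambda^{\xi n}$. Call this set $\mathcal B_{n,\lambda}$; I want to bound $\int_{\lambda_0}^{\lambda_1}\bar\nu_\lambda(\mathcal B_{n,\lambda})\,\d\lambda$ by something summable in $n$.

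The estimate for $\bar\nu_\lambda(\mathcal B_{n,\lambda})$ comes from a standard energy/transversality computation in the spirit of \cite{PeresSolomyak98, Solomyak95}. Write $r=\lambda^{\xi n}$. By a covering argument, $\bar\nu_\lambda(\mathcal B_{n,\lambda})$ is controlled by $\int \bar\nu_\lambda(B(x,r))^{\epsilon/2}\,\d\bar\nu_\lambda(x)$ up to a power of $r$, essentially because a point in $\mathcal B_{n,\lambda}$ satisfies $\bar\nu_\lambda(B(x,r))^{-\epsilon/2}>r^{-\epsilon(1+\epsilon)/2}$; more directly, Chebyshev gives $\bar\nu_\lambda(\mathcal B_{n,\lambda})\le r^{-\epsilon/2}\int \bar\nu_\lambda(B(x,r))^{\epsilon/2}\,\d\bar\nu_\lambda(x)$ after a dyadic decomposition of the values of $\bar\nu_\lambda(B(x,r))$. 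Now I integrate in $\lambda$: expanding $\bar\nu_\lambda(B(x,r))=\int \mathbf 1\{|\pi_I^\lambda(\bi)-\pi_I^\lambda(\bj)|<r\}\,\d\nu(\bi)$ and using transversality, Theorem \ref{thm:transversality}, together with the fact that $\pi_I^\lambda(\bi)-\pi_I^\lambda(\bj)=\lambda^{|\bi\wedge\bj|}(\text{tail series with nonzero first coefficient})$, one gets $\int_{\lambda_0}^{\lambda_1}\bar\nu_\lambda(B(x,r))\,\d\lambda\lesssim r^{1-\delta}$ for any fixed small $\delta$, uniformly in $x$, and a similar bound survives with the exponent $\epsilon/2$ by Jensen/Hölder. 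Combining, $\int_{\lambda_0}^{\lambda_1}\bar\nu_\lambda(\mathcal B_{n,\lambda})\,\d\lambda\lesssim r^{-\epsilon/2}\,r^{(1-\delta)\epsilon/2}=r^{\epsilon\delta'}$ for some $\delta'>0$ once $\delta$ is small, i.e. $\lesssim \lambda_1^{\xi n\epsilon\delta'}$, which is summable in $n$.

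By Fubini and the summability just obtained, $\sum_n \mathbf 1\{\text{bad at }n\}$ has finite integral over $[\lambda_0,\lambda_1]\times\Sigma$, hence is finite $\mathcal L\times\nu$-a.e.; in particular, by Fubini again, for $\mathcal L$-a.e. $\lambda$ it is finite for $\nu$-a.e. $\bi$, which is precisely ``for all $n$ sufficiently large'' the desired inequality holds. To get the quantitative statement $\mathcal L(A)\ge\lambda_1-\lambda_0-\epsilon$ rather than merely full measure, apply Markov's inequality to the function $\lambda\mapsto \sum_n \bar\nu_\lambda(\mathcal B_{n,\lambda})$, which has integral $\le C(\epsilon,\xi)$ finite; so the set where this function exceeds $C(\epsilon,\xi)/\epsilon'$ has measure $\le\epsilon'$, and on the complement the Borel--Cantelli conclusion holds for $\nu$-a.e. $\bi$ — choosing $\epsilon'=\epsilon$ gives $A$. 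The main obstacle is the uniform-in-$x$ transversality bound on $\int_{\lambda_0}^{\lambda_1}\bar\nu_\lambda(B(x,r))\,\d\lambda$ with a power saving, and pushing it through the fractional exponent $\epsilon/2$; this is where the restriction $[\lambda_0,\lambda_1]\subset(\tfrac12,\bar\lambda)$ and Theorem \ref{thm:transversality} are essential, and some care is needed because $\epsilon$ appears both in the target exponent $1+\epsilon$ and as a free small parameter, so the quantifiers must be ordered as: given $\epsilon$, pick $\delta$ (hence $\delta'$) small enough that $r^{\epsilon\delta'}$ is summable, which is always possible.
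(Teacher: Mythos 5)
Your overall skeleton is the same as the paper's: reduce the inner integral via shift-invariance of $\nu$ to $\bar\nu_\lambda(\mathcal B_{n,\lambda})$ where $\mathcal B_{n,\lambda}=\{x:\bar\nu_\lambda(B(x,r))<r^{1+\epsilon}\}$, $r=\lambda^{\xi n}$, then sum over $n$, apply Borel--Cantelli, and use Markov/Chebyshev on the $\lambda$-integral to extract a set $A$ of measure $\ge\lambda_1-\lambda_0-\epsilon$. That part is fine. The gap is in your key estimate of $\bar\nu_\lambda(\mathcal B_{n,\lambda})$. You are bounding the measure of the set where the ball-measure is \emph{small}; Chebyshev in the form you state, $\bar\nu_\lambda(\mathcal B_{n,\lambda})\le r^{-\epsilon/2}\int\bar\nu_\lambda(B(x,r))^{\epsilon/2}\,\d\bar\nu_\lambda(x)$, is oriented the wrong way and is false as a general inequality (a positive moment of $x\mapsto\bar\nu_\lambda(B(x,r))$ being small is perfectly consistent with the ball-measures being small everywhere, i.e.\ with $\bar\nu_\lambda(\mathcal B_{n,\lambda})=1$). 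The correct pointwise version you first write down, $\mathbf 1_{\mathcal B_{n,\lambda}}(x)\le r^{\epsilon(1+\epsilon)/2}\bar\nu_\lambda(B(x,r))^{-\epsilon/2}$, leads to a \emph{negative} moment, which transversality does not control --- Theorem \ref{thm:transversality} yields upper bounds on averaged ball-measures (positive moments/energies), and an upper bound on a negative moment is essentially the statement you are trying to prove. Separately, your claimed uniform-in-$x$ bound $\int_{\lambda_0}^{\lambda_1}\bar\nu_\lambda(B(x,r))\,\d\lambda\lesssim r^{1-\delta}$ for a \emph{fixed} $x$ is not what transversality gives: the hypothesis of Theorem \ref{thm:transversality} requires the difference of two power series in $\lambda$ with coefficients in $\{0,\pm1\}$ and nonzero leading coefficient, which you get when both points are of the form $\pi_I^\lambda(\bi),\pi_I^\lambda(\bj)$, not when one of them is a $\lambda$-independent constant.

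The repair is much simpler than what you attempt, and it is the paper's route: for \emph{every} $\lambda$, cover $[0,1]$ by $\lceil r^{-1}\rceil$ intervals of length $r$; any such interval meeting $\mathcal B_{n,\lambda}$ at a point $x$ is contained in $B(x,r)$ and so has $\bar\nu_\lambda$-measure at most $r^{1+\epsilon}$, whence $\bar\nu_\lambda(\mathcal B_{n,\lambda})\le 2r^{\epsilon}\le 2\lambda_1^{\xi n\epsilon}$, which is summable in $n$. No transversality and no restriction to $(\tfrac12,\bar\lambda)$ is needed --- consistent with the lemma being stated for arbitrary $[\lambda_0,\lambda_1]\subset[1/2,1)$. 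With that deterministic bound in hand, your Fubini/Borel--Cantelli/Markov endgame goes through essentially as the paper's does (the paper phrases it via the function $f(\lambda,\omega)=\sup\{n:\sigma^n(\omega)\in\pi_I^{-1}(\mathcal B_{n,\lambda})\}$ and Chebyshev on $\int f\,\d\mathcal L$).
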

\begin{proof}
Let $\lambda\in[\lambda_0,\lambda_1]$, and define: 
\[
A_n=\{x\in[0,1]\mid \bar\nu_\lambda(B(x, \lambda^{\xi n}))\le \lambda^{\xi n(1+\epsilon)}\}. 
\]
Subdivide the unit interval into $\lambda^{-\xi n}+1$ disjoint subintervals of size $\lambda^{-\xi n}$ -- if such an interval contains any point of $A_n$, its $\bar\nu_\lambda$-measure is $\le \lambda^{\xi n(1+\epsilon)}$. This means that 
\[
\bar\nu_\lambda(A_n)\le (\lambda^{-\xi n})\lambda^{\xi n(1+\epsilon)}\le \lambda_1^{\xi n\epsilon,}
\]
 Now define
\[
\pi_{I}^{-1}(A_n)=: B_n. 
\]
Then by the definition $\bar\nu_\lambda=(\pi_I)_*\nu$ and invariance of $\nu$ with respect to $\sigma$, we have $\nu(\sigma^{-n}(B_n))\le \lambda_1^{\xi n\epsilon}$. Set $f(\lambda, \omega)=\sup\{n\mid \sigma^n(\omega)\in B_n\}$. For a fixed $\lambda$, by the bound on the measure of $B_n$, there is a constant $C>0$ uniform in $\lambda$ (based on $\lambda_1$) such that 
\[
\int f(\lambda, \omega)\d\nu(\omega)\le C. 
\]
This altogether means that 
\[
\int_{\lambda_0}^{\lambda_1}\int f(\lambda, \omega)\d\nu(\omega)\d\mathcal L(\lambda)<C
\]
and by Fubini
\[
\int\int_{\lambda_0}^{\lambda_1} f(\lambda, \omega)\d\mathcal L(\lambda)\d\nu(\omega)<C.
\]
Thus for $\nu$ almost all $\omega$ we have that $\int f(\lambda, \omega)\d\mathcal L(\lambda)<\infty$. Thus for $\nu$ almost all $\omega$ we can find $n\in\N$ such that
$$\mathcal{L}(\{\lambda\in [\lambda_0,\lambda_1]:f(\lambda,\omega)>n\})\leq\epsilon$$
and the result follows by taking
$$A=[\lambda_0,\lambda_1]\backslash \{\lambda\in [\lambda_0,\lambda_1]\:f(\lambda,\omega)>n\}.$$

\end{proof}

The following remark connects two forms in which the dimension value corresponding to case (2) in Theorem \ref{thm:main} appears. 

\begin{remark}\label{rem:value}
In the computations, the dimension value 
\[
t(\gamma):=2+\frac{\log \lambda}{\log 2}-\frac{\log\gamma}{\log(\gamma\lambda)}
\]
will often show up in the form 
\begin{align*}
&\frac{\log 2 (2+\frac{\log \gamma}{\log \lambda})+\log(\gamma\lambda)}{\log 2(1+\frac{\log\gamma}{\log\lambda})}\\
&\quad= \frac{2}{1+\tfrac {\log \gamma}{\log \lambda}}+\frac{\log \gamma}{\log (\gamma \lambda)}+\frac{\log \gamma}{\log 2(1+\tfrac{\log \gamma}{\log \lambda})}+\frac{\log \lambda}{\log 2(1+\tfrac {\log \gamma}{\log \lambda})}\\
&\quad =\frac{1}{1+\tfrac {\log\gamma}{\log \lambda}}(2+\frac{\log \lambda}{\log 2})+\frac{\log\gamma}{\log(\gamma\lambda)}(1+\frac{\log \lambda}{\log 2})\\
&\quad=\frac{1}{1+\tfrac {\log\gamma}{\log \lambda}}(2+\frac{\log \lambda}{\log 2})+\frac{\log\gamma}{\log(\gamma\lambda)}(2+\frac{\log \lambda}{\log 2})-\frac{\log \gamma}{\log (\gamma\lambda)}\\
&\quad =t(\gamma).
\end{align*}
In this computation we made use of the fact that 
\[
\frac{1}{1+\tfrac {\log \gamma}{\log \lambda}}+\frac{\log \gamma}{\log (\lambda\gamma)}=1. 
\]
\end{remark}

Finally, we record the following standard result that we will use to give lower bounds for the Hausdorff dimension. 
\begin{lemma}\label{massdistribution}
Let $A\subset\R^d$ be a Borel set, and let $\mu$ be a Borel probability measure with $\mu(A)=1$. We have the following: 
\begin{enumerate}
\item[(1)]
If $\liminf_{r\to 0}\frac{\log\mu(B(Q(x,r)))}{\log r}\geq s$ for all $x\in A$ then $\dimH A\geq s$.
\item[(2)]
If $\iint\frac{\d\mu(x)\d\mu(y)}{|x-y|^s}<\infty$ then $\dimH A\geq s$.
\end{enumerate}
\end{lemma}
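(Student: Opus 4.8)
The final statement to prove is the "mass distribution" lemma (Lemma~\ref{massdistribution}), a classical result usually attributed to Frostman / Billingsley, so the plan is to give the two standard arguments.

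\medskip

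\noindent\textbf{Proof proposal for Lemma~\ref{massdistribution}.}

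\emph{Part (1).} The plan is to fix $s' < s$ and show that $\mathcal H^{s'}(A) > 0$, which gives $\dimH A \ge s'$; since $s' < s$ is arbitrary, $\dimH A \ge s$ follows. First I would observe that by the hypothesis, for every $x \in A$ there is a radius $r(x) > 0$ such that $\mu(Q(x,r)) \le r^{s'}$ for all $0 < r \le r(x)$. Partition $A$ into the countably many sets $A_k = \{x \in A : r(x) > 1/k\}$; since $\mu(A) = 1$, some $A_k$ has $\mu(A_k) > 0$. Now take any countable cover $\{U_j\}$ of $A_k$ by sets of diameter $\le 1/k$; for each $j$ meeting $A_k$, pick $x_j \in U_j \cap A_k$ and note $U_j$ is contained in the cube $Q(x_j, \diam U_j)$ (up to a dimensional constant $c_d$ depending only on $d$, to pass between balls and cubes), so $\mu(U_j) \le c_d (\diam U_j)^{s'}$. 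Summing, $\sum_j (\diam U_j)^{s'} \ge c_d^{-1} \sum_j \mu(U_j) \ge c_d^{-1}\mu(A_k) > 0$, uniformly over all such covers. Hence $\mathcal H^{s'}(A) \ge \mathcal H^{s'}(A_k) \ge c_d^{-1}\mu(A_k) > 0$, as required. The only mild technical point is the comparison between the diameter of an arbitrary set, a ball, and a cube in $\R^d$, which costs only a fixed multiplicative constant and does not affect the power $s'$.

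\emph{Part (2).} Here the plan is the standard energy (potential-theoretic) argument. Let $I_s(\mu) = \iint |x-y|^{-s}\,\d\mu(x)\,\d\mu(y) < \infty$. I would first note that finiteness of the double integral forces, for $\mu$-a.e.\ $x$, the single integral $\int |x-y|^{-s}\,\d\mu(y)$ to be finite; restricting to such $x$ changes nothing since this holds on a set of full $\mu$-measure, which still carries mass $1$ on $A$. For such an $x$, splitting the integral over dyadic annuli $\{ 2^{-(n+1)} < |x-y| \le 2^{-n}\}$ and using $\int |x-y|^{-s}\,\d\mu(y) < \infty$ gives, by a Borel--Cantelli/convergence-of-series argument, that $\mu(B(x,r)) = o(r^s)$ as $r\to 0$, and in particular $\liminf_{r\to 0} \log\mu(B(x,r))/\log r \ge s$. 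Applying part (1) (with balls, or equivalently cubes up to the same dimensional constant) on this full-measure subset of $A$ then yields $\dimH A \ge s$. Alternatively, one can cite the classical Frostman lemma directly.

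\medskip

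\noindent I do not expect a serious obstacle here: both statements are textbook (see e.g.\ Falconer or Mattila), and the only things to be careful about are (i) the balls-versus-cubes normalisation, which is harmless because the paper's cubes $Q(x,r)$ and balls $B(x,r)$ are comparable with a $d$-dependent constant, and (ii) in part (2), making the passage from finite $s$-energy to the pointwise lower density bound rigorous via the dyadic decomposition. Neither affects the exponent, so the stated bounds hold verbatim.
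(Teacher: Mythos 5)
Your proof is correct: both parts are the standard mass distribution principle and energy/potential-theoretic arguments, and the passage from finite $s$-energy to $\mu(B(x,r))=o(r^s)$ via dyadic annuli (really just the tail of a convergent series, rather than Borel--Cantelli) is sound. The paper does not prove this lemma at all — it simply cites Theorems 3.3.14 and 3.2.4 of Edgar's book — so your write-up supplies the textbook proof that the authors chose to reference.
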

This result follows from Theorems 3.3.14 and 3.2.4 in \cite{Edgar98}. The nature of the measure we will be constructing means we will need a limit inferior as in (1) above, rather than the same result with a limit, that is perhaps more common elsewhere.

\section{Upper bounds}\label{sec:upperbounds}
In this section we prove the upper bounds for both Theorems \ref{thm:main} and \ref{thm:dynamical}. We use the standard approach briefly described in the introduction. That is, we cover the shrinking target set by first covering each time $n$ return, $E^{-n}(Q_n)$, by cubes of side length $2^{-r}$, and then take the union of these covers for $n\geq N$; compare to  \eqref{eq:cover*}. 

For a given $n$, there are three strategies for covering the pre-images $E^{-n}(Q_n)$ that are relevant to the upper bounds at the dimension values of Theorem \ref{thm:main}. The very first strategy, which gives the dimension value of (1), is to use a single cube of side $2^{-r_0}$, where $r_0$ is chosen such that $k(r_0)$ is approximately $n+\ell_2(n)$ (for notation, recall Definition \ref{def:alltheells}). The second strategy gives the dimension value of (3). Here, we cover by cubes of side-length $2^{-r}$ where $r=n+\ell_1(n)$. The final method is to cover by cubes of side $2^{-r}$ where $r=n+\ell_{2}$. This gives the dimension value from case (2) in Theorem \ref{thm:main}. 
The first two strategies give upper bounds which hold for all centres $z$, whereas the third method only applies in the case when $z$ satisfies the unique expansion condition, as it makes use of the fact that  there is (almost) no branching between times $n+\ell_1(n)$ and $n+\ell_2(n)$. At the end of the first and third case we will explain how they can be adapted to give the upper bounds in Theorem \ref{thm:dynamical}.

\subsection{The case \texorpdfstring{$r=r_0$}{r=r0}: } 

We fix $\gamma,\lambda$ and $\bj$ and cover the set $R^{*}=R^{*}(\pi(\bj),\lambda, \gamma)$. Choose $r_0$ large as possible so that we can use a single cube of side $(\tfrac 12)^{r_0}$ to cover each of the $2^n$ pre-images $E^{-n}(Q_n)$, which are rectangles with the side-lengths  $\lambda^n\gamma^n$ and $(\tfrac 12 \gamma)^n$. This leads to $r_0$ the largest integer with $2^{-r_0}\ge \lambda^{n+\ell_2(n)}$. Hence, we obtain the upper bound 
\[
\mathcal H^s(R^*)\lesssim \sum_{n\ge N} 2^n(\lambda^n\gamma^n)^s, 
\]
which sum converges, and hence gives an upper bound to the dimension, when 
\[
s>\frac{-\log 2}{\log(\lambda\gamma)}. 
\]
This gives the upper bound for part 1 of Theorem \ref{thm:main}. 

For Theorem \ref{thm:dynamical} when $\ell_n=\lceil n\frac{\log\lambda}{\log \gamma}\rceil$ we have that if $\pi(\bj)=z$ and $\ell_n=\lceil n\frac{\log\lambda}{\log\gamma}\rceil$ then 
$$\pi_{\lambda}(\{\bj\in\Sigma\mid\sigma^n(i)\in[\bj|_{l_n}]\})\subseteq Q_n(z,\gamma^n).$$
Thus
$\pi_{\lambda}(R(\bj,(\ell_n)))\subseteq R^{*}(\pi(\bj),\lambda, \gamma)$ and so it immediately follows that
$$\dimH R(\bj,(\ell_n))\leq \frac{-\log 2}{\log(\lambda\gamma)}.$$

\subsection{The case \texorpdfstring{$r=n+\ell_1(n)$}{r-n+l1(n)}: }

In this case we cover with cubes of side length $2^{-r}=2^{-(n+\ell_1(n))}$ which, by choice of $\ell_1(n)$, corresponds to covering by cubes of side length $2^{-n}\gamma^n$, up to constants. We note as in the previous case, that each of the sets $E^{-n}(Q_n)$ is contained in a rectangle of sides $(\tfrac 12)^n\gamma^n$ and $\lambda^n\gamma^n$. We can cover these by cubes of side-length $\gamma^n(\tfrac 12)^n$. For each of the pre-images, there are (up to constants)
\[
\frac{\lambda^n\gamma^n}{\gamma^n(\tfrac 12)^n}=\frac{\lambda^n}{(\tfrac 12)^n}
\]
of them. This means that 
\[
\mathcal H^s(R^*)\lesssim \sum_{n\ge N} 2^n\frac{\lambda^n}{(\tfrac 12)^n}((\tfrac 12)^n\gamma^n)^s, 
\]
which sum converges when 
\[
s>\frac{-2\log 2-\log \lambda}{\log (\gamma\cdot\tfrac 12)}. 
\]
This gives the upper bound for part 3 of Theorem \ref{thm:main}.

\subsection{The unique expansion case with \texorpdfstring{$r=n+\ell_2(n)$}{r=n+l2(n)}:}\label{subsec:uppercase2}
In this case we aim to cover with cubes of side length $2^{-r}=2^{-(n+\ell_2(n))}$. Let $z=\pi_{\lambda}(\bz)$ satisfy the unique expansion condition, that is, $\pi_I(\bz)$ has unique $\lambda$-expansion $\bz$. We then have that $\pi_{\lambda}(\bi)\in E^{-n}(Q_n)$ implies $|\pi_I(\sigma^n(\bi))-\pi_I(\bz)|\leq\gamma^n$. Recall that
 \[
N_k(z)=\#\{\bi\in \Sigma_k\mid \pi[\bi]\cap Q(z, \gamma^k)\neq \emptyset\}.
\]
 Thus, since $\ell_2(n)$ is the smallest integer satisfying $\lambda^{\ell_2(n)}\le \gamma^n$, there are at most $N_{\ell_2(n)}$ choices for $(i_{n+1},\ldots,i_{n+\ell_2(n)})$. Thus $E^{-n}(Q_n)$ can be covered by $2^nN_{\ell_2(n)}$ cylinders  from level $\ell_2+n$, Furthermore,  each of those can be covered by $\lambda^{n+\ell_2(n)}2^{n+\ell_2(n)}$ squares of side $2^{-(n+\ell_2(n))}$, up to constants.

Therefore we have that for $s\geq 0$, the Hausdorff measure of $R^*(z, \lambda)$ can be estimated 
$$\mathcal H^s(R^*(z,\lambda))\lesssim \lim_{m\to\infty}\sum_{n\geq M}2^nN_{\ell_2(n)}\lambda^{n+\ell_2(n)}2^{n+l_2(n)}2^{-s(n+\ell_2(n))}.$$
By Lemma \ref{uelocaldim} $\lim_{k\to\infty} k^{-1}\log N_k(\bz)=0$. Hence, we have that $\mathcal H^s((R^*(z,\lambda))<\infty$ if 
\[
2^{2+\log(\gamma)/\log(\lambda)-s(1+\log(\gamma)/\log(\lambda))}\lambda\gamma<1
\]
and thus when
$$s>\frac{\log(2)(2+\frac{\log(\gamma)}{\log(\lambda)})+\log(\gamma\lambda)}{\log(2)(1+\frac{\log(\gamma)}{\log(\lambda)})}.$$
In particular
$$\dimH(R^*(z,\lambda))\leq\frac{\log(2)(2+\frac{\log(\gamma)}{\log(\lambda)})+\log(\gamma\lambda)}{\log(2)(1+\frac{\log(\gamma)}{\log(\lambda)})}$$
which is the upper bound in part 2 of Theorem \ref{thm:main} by Remark \ref{rem:value}. 

For the dynamical shrinking target result Theorem \ref{thm:dynamical}, we have that for any $\bj\in\Sigma$,
$$\pi(R(\bj,(\ell_n)))=\limsup_{n\to\infty}\pi(\sigma^{-n}([\bj|_{\ell_n}]).$$
Similarly to above it is true that $\pi(\sigma^{-n}([\bj|_{\ell_n}])$ can be covered by $\lceil 2^{2n+\ell_n}\lambda^{n+\ell_n}\rceil$ boxes of diameter $2^{-n-\ell_n}$ and so
$$\mathcal H^s(\pi(R(\bj,\lambda))\leq  \lim_{M\to\infty}\sum_{n\geq M}\lambda^{n+\ell_n}2^{2n+\ell_n}2^{-s(n+\ell_n)}.$$
Thus we get 
$$\dimH R(\bj,(\ell_n))\leq \frac{\log(2)(2+\frac{\log(\gamma)}{\log(\lambda)})+\log(\gamma\lambda)}{\log(2)(1+\frac{\log(\gamma)}{\log(\lambda)})}$$
and the upper bound for part 2 of Theorem \ref{thm:dynamical} follows by Remark \ref{rem:value}.

\section{Lower bound in Theorem \ref{thm:main} (1): dim \texorpdfstring{$\leq 1$}{leq 1}}

We split the proof of the lower bound into three parts, corresponding to the three parts of Theorem \ref{thm:main}. The first part (part (1) of Theorem \ref{thm:main}) is what we consider in this section. In this case,  $\lambda\in (1/2,(2\gamma)^{-1}]$, and the dimension value is expected to be $-\log 2/\log(\lambda\gamma)$. The argument in this section holds for all those $\lambda$ in this range which satisfy the exponential separation condition of Definition \ref{def:expsep}, and is independent of the centre for the shrinking target $z$.

Fix $\gamma \in (0,1)$ and $\lambda\in (1/2,(2\gamma)^{-1}])$ which satisfies the exponential separation condition. As the bound in part (1) does not depend on $z$, let us fix an arbitrary $z\in F$ with $\underline{z}\in \Sigma$ satisfying $\pif(\underline{z})=z$. Our aim is to construct a subset of $R^*(z,\lambda)$ with sufficiently large dimension.  We start by introducing a strictly increasing sequence  of positive integers, $(n_m)_{m\in\N}$ such that $\lim_{m\to\infty} \frac{n_m}{n_{m+1}}=0$ and $n_{m+1}-n_m>\lceil n_m\frac{\log \gamma}{\log \lambda}\rceil$ for every $m\in \N$. We let
$$\mathcal{C}=\bigcap_{m=1}^{\infty}R_{n_m}^*\subset R^*(z, \lambda, \gamma)$$
and our aim is to show that $\dim\mathcal C\geq -\frac{\log 2}{\log(\lambda\gamma)}$. To achieve this we construct a suitable measure $\bar \mu$ on $\mathcal C$, and apply the mass distribution principle. This measure $\bar \mu$ is constructed by projecting an appropriate measure $\mu$ from the shift space onto $\mathcal C$. 

Recall that for each $m\in\N$,  $\ell_2(n_m)$ is the smallest integer for which $\lambda^{\ell_2(n_m)}<\gamma^n$. Thus, if for some $\underline{x}\in\Sigma$ we have $(x_1,\ldots,x_{\ell_2(n_m)})=(z_1,\ldots,z_{\ell_2(n_m)})$ then $\pi(\bx)\in Q_n(z).$ 
Now for each $m\in\N$ we define $L_{m}=\sum_{i=0}^m \ell_2(n_{i})$ with $n_0=0$. We would like to define $\mu$ in such a way that it is supported on $\cap_{m=1}^\infty R_{n_m}$, that is, it is concentrated on the digits of $\bz$ between levels $n_m$ and $n_m+\ell_2(n_m)$, and between levels $n_m+\ell_2(n_m)$ and $n_{m+1}$ it is as uniform as possible. Hence, we set for a finite word $[\underline{x}]$, the following: If there exists $m\in\N$ where $x_{n_m+i}\neq z_i$, $0<i\leq \ell_2(n_m)$ and $n_m+i\leq |\underline{x}|$ then $\mu([\underline{x}])=0$. On the other hand if $x_{n_m+i}=z_i$ for all $m\in\N$ and $0<i\leq\ell_2(n_m)$ where $n_m+i\leq |\underline{x}|$ we set; 

$$\mu([\underline{x}])=
\left\{\begin{array}{lll}
2^{L_m-|\underline{x}|}&\text{ if }&n_m+\ell_2(n_m)<|\underline{x}|\leq n_{m+1}.\\
2^{L_{m-1}-n_m}&\text{ if }& n_m<\underline{x}\leq n_m+\ell_2(n_m)
\end{array}\right.
$$
This means that the measure $\mu$ is defined for all cylinder sets and
$$\sum_{|\underline{x}|=j}\mu([x_1,\ldots,x_j])=1$$
for all $j\in\N$ and so $\mu$ extends to a Borel probability measure on $\Sigma$ by the Carath\'eodory's extension theorem. We set $\overline{\mu}=\pi_{*}\mu$ and note that $\overline{\mu}(\mathcal{C})=1$. This second statement follows as if $\underline{x}$ satisfies $\pi(\underline{x})\notin \mathcal{C}$ then there exists $m\in\N$ where $x_{n_m+i}\neq z_i$ and $0<i\leq \ell_2(n_m)$ thus for $|\underline{x}|\geq n_m+i$ we have  $\mu([\underline{x}])=0$.

For the dimension lower bound our aim is to use part 1 of Lemma \ref{massdistribution} with the measure $\overline{\mu}$. We fix $\lambda\in (1/2,1)\cap\mathcal{E}$ and $C$ as in Lemma \ref{agreement}. Recall the notation for $\ell_1(n), \ell_2(n), k(r)$ from Definition \ref{def:alltheells}. We fix $R>0$ such that $R$, an arbitrary $x\in \mathcal{C}$ and consider cubes $Q(x,R)$. We consider $m\in \N$ such that $R\in [C2^{-r_{-1}(n_{m+1})},C2^{-r_{-1}(n_{m})}]$,  where 
\begin{itemize}
\item $r_{-1}=r_{-1}(n_m)$ such that $k(r_{-1}(n_m))=n_m$, and 
\item $r_0=r_0(n_m)$ such that $k(r_0(n_m))=n_m+\ell_2(n_m)$. 
\end{itemize}
Note that since $\lambda<\frac{1}{2\gamma}$ we have $r_0>n_m$. Underneath, we will handle separately three ranges of values $R\in [C2^{-r_{-1}(n_{m+1})},C2^{-r_{-1}(n_{m})}]$, essentially, $R\in [C2^{-n_m},C2^{-r_{-1}(n_m)}]$, $R\in [C2^{-r_0},C2^{-n_m}]$ and $R\in[C2^{-r_{-1}(n_{m+1})}, C2^{-r_0}]$. 
\begin{lemma}\label{boundby1}
Let $x\in \mathcal{C}$. Let $r\in \N$ satisfy $r_{-1}(n_m)\leq r\leq n_m$. Then for all $R\in[C2^{-(r+1)},C2^{-r})$ we have that
$$\overline{\mu}(Q(x,R))\leq 2^{L(m-1)-r}.$$
\end{lemma}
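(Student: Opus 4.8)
The plan is to exploit the fact that in the range $r_{-1}(n_m)\le r\le n_m$, a cube $Q(x,R)$ with $R\in[C2^{-(r+1)},C2^{-r})$ is, by Lemma \ref{agreement}, small enough that any point $y=\pi(\by)$ lying in it must satisfy $|\by\wedge\bx|\ge r$. Consequently $\pi^{-1}(Q(x,R))$ is contained in the cylinder $[\bx|_r]$, and therefore $\overline\mu(Q(x,R))\le \mu([\bx|_r])$. So the whole lemma reduces to reading off the value $\mu([\bx|_r])$ from the definition of $\mu$ and checking it is at most $2^{L_{m-1}-r}$.

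First I would spell out the containment argument: fix $x\in\mathcal C$ and $R<C2^{-r}$, and suppose $y=\pi(\by)\in Q(x,R)$. If $|\bx\wedge\by|=s<r$ then Lemma \ref{agreement} gives $d(x,y)\ge C2^{-s}\ge C2^{-(r-1)}>R$, a contradiction once one is slightly careful about whether $Q$ denotes a cube of side $R$ or radius $R$ — in either reading the same estimate works after adjusting $C$, and the paper has already set up this dictionary in Remark \ref{rem:structure}. Hence $\by\in[\bx|_r]$, so $\pi^{-1}(Q(x,R))\subset[\bx|_r]$ and $\overline\mu(Q(x,R))=\mu(\pi^{-1}(Q(x,R)))\le\mu([\bx|_r])$.

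Next I would evaluate $\mu([\bx|_r])$. Since $r\le n_m$, the cylinder $[\bx|_r]$ reaches at most to level $n_m$, so the only ``frozen'' blocks of $\bz$ that it can intersect are those sitting in levels $(n_i,n_i+\ell_2(n_i)]$ for $i\le m-1$ (the block starting at $n_m$ contributes nothing until level $n_m+1$). Because $x\in\mathcal C\subset\bigcap R_{n_i}^*$, all of those blocks are indeed copies of the corresponding initial block of $\bz$, so $[\bx|_r]$ is not a null cylinder for $\mu$. Now apply the defining formula: if $r\in(n_{m-1}+\ell_2(n_{m-1}),n_m]$ lies in the ``uniform'' regime then $\mu([\bx|_r])=2^{L_{m-1}-r}$ exactly; if instead $r$ lands inside an earlier frozen block, say $r\in(n_i,n_i+\ell_2(n_i)]$ for some $i\le m-1$, then $\mu([\bx|_r])=2^{L_{i-1}-n_i}$. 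In the first case we are done on the nose. In the second case I would check $2^{L_{i-1}-n_i}\le 2^{L_{m-1}-r}$, i.e. $L_{m-1}-L_{i-1}\ge n_i-r$; since $r\le n_i+\ell_2(n_i)$ the right side is at most $\ell_2(n_i)\le \ell_2(n_i)+\dots$, while $L_{m-1}-L_{i-1}=\sum_{j=i}^{m-1}\ell_2(n_j)\ge \ell_2(n_i)\ge 0 \ge n_i-r$ — actually $n_i-r\le 0$ here since $r\ge r_{-1}(n_m)$ need not exceed $n_i$, so one should just note $n_i-r$ could be positive only when $r<n_i$, which is impossible as $r$ is in the block $(n_i,n_i+\ell_2(n_i)]$; hence $n_i-r\le 0\le L_{m-1}-L_{i-1}$ and the bound holds. (A cleaner way: inside a frozen block $\mu$ is constant, and at the top of the block it equals the value $2^{L_i-(n_i+\ell_2(n_i))}$ of the uniform regime that follows, so monotonicity of $r\mapsto$ (exponent) along $[\,r_{-1}(n_m),n_m\,]$ is immediate; I would phrase it this way to avoid the case split.)

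The only mild subtlety — and the one place to be careful rather than routine — is matching the geometric scale $2^{-r}$ of the cube with the combinatorial depth $r$ of the cylinder, i.e. justifying $\pi^{-1}(Q(x,R))\subset[\bx|_r]$ with the correct constant; this is exactly the content of Lemma \ref{agreement} together with the remark that $R<C2^{-r}$, so it is not really an obstacle, just bookkeeping about $C$. Everything else is a direct substitution into the piecewise definition of $\mu$.
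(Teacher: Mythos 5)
Your proposal is correct and follows essentially the same route as the paper: Lemma \ref{agreement} forces $\pi^{-1}(Q(x,R))\subset[\bx|_r]$, and the bound then reads off from the definition of $\mu$. The paper simply asserts $\mu([\bx|_r])\leq 2^{L_{m-1}-r}$ without the case analysis you give for $r$ landing in an earlier frozen block, where your ``cleaner way'' via monotonicity of the exponent is the right formulation (and quietly repairs the sign slip in your first attempt at that case).
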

\begin{proof}
Let $\underline{x}\in\Sigma$ satisfy that $\pi(\underline{x})=x$. Suppose that $\underline{y}\in\Sigma$ with $$|\pif(\underline{y})-\pif(\underline{x})|<R.$$
We have that by Lemma \ref{agreement} for $1\leq i\leq r$, $\bx_i=\by_i$. Thus using the definition of $\mu$ gives
\begin{eqnarray*}
\overline{\mu}(Q(x,R))&=&\mu(\{\by\mid|\pi(\underline{x})-\pi(\by)|<R\})\\
&\leq&\mu(\{\underline{y}\mid x_i=y_i\text{ for }1\leq i\leq r\})\\
&\leq&2^{-r+L(m-1)}
\end{eqnarray*}  
and the result follows.
\end{proof}
\begin{lemma}\label{r_0bound}
We have that for $x\in \mathcal{C}$, $n_m\leq r\leq r_0(n_m)$ and $R\in[C2^{-(r+1)},C2^{-r})$ that
$$\overline{\mu}(Q(x,R))\leq 2^{-n_m+L_{m-1}}.$$
\end{lemma}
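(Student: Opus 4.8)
The plan is to mimic the proof of Lemma \ref{boundby1}: pass to the coding space, use Lemma \ref{agreement} to say that the cube $Q(x,R)$ only meets cylinders that agree with $\underline x$ in their first $r$ coordinates, and then read off the value of $\mu$ on a level-$r$ cylinder directly from its defining formula. Concretely, fix $\underline x\in\Sigma$ with $\pi(\underline x)=x$. Since $R<C2^{-r}$, Lemma \ref{agreement} (in contrapositive form) gives that every $\underline y$ with $|\pi(\underline y)-\pi(\underline x)|<R$ satisfies $|\underline x\wedge\underline y|\ge r$, i.e. $y_i=x_i$ for $1\le i\le r$. Hence $\overline\mu(Q(x,R))=\mu(\{\underline y\mid |\pi(\underline x)-\pi(\underline y)|<R\})\le\mu([\underline x|_r])$, and it remains to bound $\mu([\underline x|_r])$ by $2^{-n_m+L_{m-1}}$.

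The one point that needs care is locating the level $r$ relative to the ``return window'' $(n_m,n_m+\ell_2(n_m)]$. Because $\lambda>1/2$ we have $\tfrac{\log 2}{-\log\lambda}>1$, so the smallest integer $k(s)$ with $\lambda^{k(s)}\le 2^{-s}$ satisfies $k(s)\ge s\tfrac{\log 2}{-\log\lambda}>s$ for every $s\in\N$. Applying this with $s=r_0(n_m)$ and recalling that $r_0(n_m)$ is defined by $k(r_0(n_m))=n_m+\ell_2(n_m)$ yields $r_0(n_m)<n_m+\ell_2(n_m)$. Together with the hypothesis $n_m\le r\le r_0(n_m)$ this gives
\[
n_m\le r\le r_0(n_m)<n_m+\ell_2(n_m),
\]
so $r$ lies in $[n_m,\,n_m+\ell_2(n_m))$.

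Finally, for $r$ in this window the definition of $\mu$ forces $\mu([\underline x|_r])$ to be either $0$ (if some digit of $\underline x$ in a return block disagrees with $\bz$) or exactly $2^{L_{m-1}-n_m}$: when $n_m<r$ this is the second branch of the definition, and the boundary case $r=n_m$ is the first branch with index $m-1$, which is legitimate since the sequence $(n_j)$ was chosen with $n_m-n_{m-1}>\lceil n_{m-1}\tfrac{\log\gamma}{\log\lambda}\rceil\ge\ell_2(n_{m-1})$, i.e. $n_{m-1}+\ell_2(n_{m-1})<n_m$. In either case $\mu([\underline x|_r])\le 2^{-n_m+L_{m-1}}$, which is the claim. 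The only mild obstacle is the book-keeping with the ceilings in the definitions of $k$, $\ell_2$ and $r_0$ needed for the strict inequality $r_0(n_m)<n_m+\ell_2(n_m)$; everything else follows immediately from Lemma \ref{agreement} and the explicit formula for $\mu$, and no use of $x\in\mathcal C$ beyond membership is actually required.
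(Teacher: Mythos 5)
Your proof is correct and follows essentially the same route as the paper's: reduce to the cylinder $[\underline x|_r]$ via Lemma \ref{agreement} and bound $\mu$ of that cylinder by $2^{L_{m-1}-n_m}$. The paper dispenses with your branch-location bookkeeping by simply observing that $r\ge n_m$ suffices (any cylinder of length at least $n_m$ has $\mu$-mass at most $2^{L_{m-1}-n_m}$, by monotonicity of cylinder measures), but your extra verification that $r_0(n_m)<n_m+\ell_2(n_m)$ is accurate and harmless.
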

\begin{proof}
Let $\underline{x}\in\Sigma$ satisfy that $\pi(\underline{x})=x$. Suppose that $\by\in\Sigma$ satisfies $\pi(\underline{y})\in Q((x,R))$.
We have that by Lemma \ref{agreement} for $1\leq i\leq r$, $\bx_i=\by_i$. Thus using that $r\geq n_m$ we have that
\begin{eqnarray*}
\overline{\mu}(Q(x,R))&=&\mu(\{\underline{y}\mid|\pi(\underline{y})\in Q(x,R)\})\\
&\leq&\mu(\{\by\mid\by_i=\bx_i\text{ for }1\leq i\leq r\})\\
&\leq&2^{-n_m+L_{m-1}}. 
\end{eqnarray*}  

\end{proof} 
In the final range we need to make use of the exponential separation condition that $\lambda$ satisfies. Note that for some of the range it would be possible to get better estimates but any finer analysis is unnecessary for case (1) of Theorem \ref{thm:main}. 
\begin{lemma}\label{Pablobound} 
For any $\epsilon>0$ there exists $C_1>0$ such that for $x\in \mathcal{C}$, $r_0(n_m)\leq r\leq r_{-1}(n_{m+1})$ and $R\in[C2^{-(r+1)},C2^{-r})$ we have the estimate
$$\overline{\mu}(Q(x,R))\leq C_12^{-n_m+L_{m-1}}\lambda^{(k(r)-(n_m+\ell_2(n_m))(1-\epsilon)}.$$

\end{lemma}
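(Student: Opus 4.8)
The plan is to decompose the first $k(r)$ symbols of a point lying $R$-close to $x$ into three blocks, each controlled by a different mechanism, much as in Lemmas \ref{boundby1} and \ref{r_0bound} but now also invoking the branching bound of Corollary \ref{cor:branching}. Write $\bz$, $\bx$ for the codings of $z$, $x$, and set $p:=n_m+\ell_2(n_m)$. At scale $R\asymp 2^{-r}$ the $y$-coordinate alone already pins the first $r$ symbols (Lemma \ref{agreement}); the definition of $\mu$ additionally pins the block of levels strictly between $n_m$ and $p$ to $\bz$; and since the hypotheses on $r$ force $p\le k(r)\le n_{m+1}$, the remaining levels up to $k(r)$ fall inside the range $(n_m+\ell_2(n_m),n_{m+1}]$ on which $\mu$ is uniform, so the leftover branching of the $x$-coordinate is precisely what Corollary \ref{cor:branching} controls. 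Multiplying the three contributions will give the claim; we may assume $\overline{\mu}(Q(x,R))>0$.

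\textbf{Step 1.} First I would show that every $\underline y$ in the support of $\mu$ with $\pi(\underline y)\in Q(x,R)$ has the \emph{same} length-$p$ prefix
\[
\underline w:=(x_1,\dots,x_{n_m},z_1,\dots,z_{\ell_2(n_m)}),
\]
and that $\underline w$ is admissible for $\mu$ with $\mu([\underline w])=2^{L_{m-1}-n_m}$. For the prefix: $R<C2^{-r}$, so Lemma \ref{agreement} forces $y_i=x_i$ for $1\le i\le r$; since $r\ge r_0(n_m)>n_m$ (this is the only place where $\lambda<\tfrac1{2\gamma}$ enters, cf.\ the remark preceding the lemma), this pins $y_1,\dots,y_{n_m}$ to $x_1,\dots,x_{n_m}$, while membership in the support of $\mu$ pins $y_{n_m+1},\dots,y_p$ to $z_1,\dots,z_{\ell_2(n_m)}$; hence $\underline y|_p=\underline w$, a word depending only on $x$ and $z$. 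For admissibility, $\underline w$ respects every forced block of $\mu$ below level $p$: block $m$ by construction, and, for $j<m$, block $j$ lies inside $\{1,\dots,n_m\}$ (by the separation $n_{j+1}-n_j>\ell_2(n_j)$), where $\underline w=\bx=\underline y$ agrees with the corresponding block of $\bz$; since $|\underline w|=p\in(n_m,n_m+\ell_2(n_m)]$, the formula for $\mu$ gives $\mu([\underline w])=2^{L_{m-1}-n_m}$.

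\textbf{Steps 2 and 3.} By Step 1, $\overline{\mu}(Q(x,R))\le\mu(\{\underline y\in[\underline w]:\pi_I(\underline y)\in B(\pi_I(x),R)\})$. Because $r_0(n_m)\le r\le r_{-1}(n_{m+1})$ and $k(\cdot)$ is increasing, $p\le k(r)\le n_{m+1}$, so levels $p+1,\dots,k(r)$ lie in the uniform range of $\mu$; hence, conditioned on $[\underline w]$, each word $\underline v\in\Sigma_{k(r)-p}$ receives mass $2^{-(k(r)-p)}$, i.e.\ $\mu([\underline w\,\underline v])=2^{L_m-k(r)}=2^{L_{m-1}-n_m}2^{-(k(r)-p)}$. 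Writing $g_{\underline w}$ for the affine map of ratio $\lambda^p$ with $\pi_I(\underline y)=g_{\underline w}(\pi_I(\sigma^p\underline y))$ on $[\underline w]$, such a $\underline v$ can occur only if the cylinder $\pi_I[\underline v]$ meets $g_{\underline w}^{-1}(B(\pi_I(x),R))=B(x^\sharp,\lambda^{-p}R)$ with $x^\sharp:=g_{\underline w}^{-1}(\pi_I(x))$. Since $k(r)$ is minimal with $\lambda^{k(r)}\le 2^{-r}$, one has $2^{-r}<\lambda^{k(r)-1}$, hence $\lambda^{-p}R<C\lambda^{-1}\lambda^{k(r)-p}$; so, after translating $x^\sharp$ into $[0,1]$ and enlarging the radius by a bounded factor if needed (costing only a constant), Corollary \ref{cor:branching} at level $k(r)-p$ bounds the number of $\underline v$ that can arise by $C_1 2^{k(r)-p}\lambda^{(k(r)-p)(1-\epsilon)}$. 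Summing the masses of the corresponding level-$k(r)$ cylinders,
\[
\overline{\mu}(Q(x,R))\le C_1 2^{k(r)-p}\lambda^{(k(r)-p)(1-\epsilon)}\cdot 2^{L_{m-1}-n_m}2^{-(k(r)-p)}=C_1 2^{L_{m-1}-n_m}\lambda^{(k(r)-(n_m+\ell_2(n_m)))(1-\epsilon)},
\]
which is the asserted estimate; the $O(1)$ slack in the number of agreeing symbols from the constant in Lemma \ref{agreement}, and the recentring of $x^\sharp$, are absorbed into $C_1$.

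The step I expect to be most delicate is Step 1 — making the pinned prefix have length \emph{exactly} $p=n_m+\ell_2(n_m)$. Stopping at level $n_m$ would leave a spurious factor $2^{\ell_2(n_m)}$ that the branching gain $\lambda^{(1-\epsilon)(\cdot)}$ cannot absorb when $\lambda$ is near $\tfrac12$, whereas pinning beyond level $p$ is not licensed once $r<n_m+\ell_2(n_m)$. This is exactly where both hypotheses on $r$ enter: $r\ge r_0(n_m)$ yields simultaneously $r>n_m$ and $k(r)\ge n_m+\ell_2(n_m)$ (so the branching block is nondegenerate), while $r\le r_{-1}(n_{m+1})$ yields $k(r)\le n_{m+1}$ (so the leftover levels are genuinely uniform for $\mu$).
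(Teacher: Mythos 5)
Your proposal is correct and follows essentially the same route as the paper's proof: Lemma \ref{agreement} pins the first $r$ symbols, the support condition of $\mu$ pins the remaining symbols up to level $n_m+\ell_2(n_m)$, and Corollary \ref{cor:branching} bounds the branching between levels $n_m+\ell_2(n_m)$ and $k(r)$, after which the masses are multiplied exactly as you do. Your Step 1 is simply a more explicit bookkeeping of the admissibility of the prefix $\underline w$ and the inequalities $p\le k(r)\le n_{m+1}$, which the paper leaves implicit.
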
 
 \begin{proof}
 Let $\epsilon>0$ and $\underline{x}\in\Sigma$ satisfy that $\pi(\underline{x})=x$.  Suppose that $\underline{y}\in\Sigma$ satisfies that  $\pi(\underline{y})\in Q(x,R).$
We have that by Lemma \ref{agreement} for all $1\leq i\leq r$ $y_i=x_i$. Moreover since $x\in \mathcal{C}$ we have that $x_i=z_{j-n_m}$ for all $r_0<j\leq k(r_0(n_m))=n_m+\ell_2(n_m)$. Thus if for some $r_0<j\leq k(r_0(n_m))=n_m+\ell_2(n_m)$ we have that $y_j\neq x_j$, and then
$$\mu([y_1,\ldots,y_r])=0.$$
Hence, we may suppose that $x_i=y_i$ for all $1\leq i\leq n_m+l_2(n_m)$. Thus we wish to find
$$V=\#\{(y_{n_m+l_2(n_m)+1},\ldots,y_{k(r)})\mid\pi([x_1,\ldots,x_{n_m+\ell_2(n_m)},y_{n_m+\ell_2(n_m)+1},\ldots,y_{k(r)}])\cap Q(x,R)\neq\emptyset\}.$$
Since $\lambda\in \mathcal{E}$ we can use Corollary \ref{cor:branching} to bound
$$V\leq C_1\lambda^{(1-\epsilon)(k(r)-(n_m+\ell_2(n_m)))}2^{k(r)-(n_m+\ell_2(n_m))}.$$
Putting this together we get that  
\begin{eqnarray*}
\overline{\mu}(Q(x,R))&\leq& V2^{-k(r)+L_m}\\
&\leq& V2^{-k(r)+n_{m}+\ell_2(n_m)}2^{-n_m+L_{m-1}}\\
&\leq& C_1\lambda^{(1-\epsilon)(k(r)-(n_m+\ell_2(n_m)))}2^{-n_m+L_{m-1}}
\end{eqnarray*}
which is what was wanted.
\end{proof}
We now need to consider what each of the above estimates gives as $m\to\infty$. Firstly, for $r_{-1}(n_m)\leq r\leq -n_m$ and $R_m\in(C2^{-(r+1)},C2^{-r})$ we obtain 
by Lemma \ref{boundby1} that, for any $x\in \mathcal{C}$, 
$$\frac{\log(\overline{\mu}(Q(x,R_m))}{\log R_m}\geq \frac{\log(2^{-r-L_{m-1}})}{\log C2^{-r-1}}=\frac{ r+L_{m-1}}{r+1}.$$
That is, there is a function $\delta(m)$, approaching $0$ with $m\to\infty$, for which
$$r+L_{m-1}\geq r-n_{m-1}\geq r-n_m(\delta(m))\geq r-r(\delta(m)\frac{\log 2}{-\log\lambda}).$$
Therefore in this region we have
$$\frac{\log(\overline{\mu}(Q(x,R_m))}{\log R_m}\geq 1+\mathit{o}(1).$$
In the second part we have $n_m\leq r\leq r_0(n_m)$ and $R_m\in [C2^{-(r+1)},C2^{-r}]$, so that by Lemma \ref{r_0bound} we have for $x\in \mathcal{C}$
$$\frac{\log(\overline{\mu}(Q(x,R_m))}{\log R_m}\geq \frac{\log 2^{-n_m+L_{m-1}}}{\log R_m}\geq \frac{\log 2^{-n_m+L_{m-1}}}{\log (C2^{-r_0(n_m)})}.$$
This gives 
$$\frac{\log(\overline{\mu}(Q(x,R_m))}{\log R_m}\geq \frac{\log 2}{-\log(\lambda\gamma)}+\mathit{o}(1).$$
Finally for the third part we have that for a fixed $\epsilon>0$, $r_0(n_m)\leq r\leq r_{-1}(n_{m+1})$ and $R_m\in [C2^{-(r+1)},C2^{-r}]$ by Lemma \ref{Pablobound}
$$\frac{\log(\overline{\mu}(Q(x,R_m))}{\log R_m}\geq \frac{\log(2^{-n_m+L_{m-1}})+\log(C_1\lambda^{(1-\epsilon)(k(r)-(n_m+\ell_2(n_m)))})}{\log 2^{-r_0(n_m)}+\log(\lambda^{k(r)-(n_m+\ell_2(n_m))})}$$
and thus in this region
$$\liminf_{m\to\infty} \frac{\log(\overline{\mu}(Q(x,R_m))}{\log R_m}\geq \min\{(1-\epsilon),\frac{\log 2}{-\log(\lambda\gamma)}\}+\mathit{o}(1).$$
Combining these estimates, letting $m\to\infty$, $\epsilon\to 0$ and using that $\frac{-\log 2}{\log(\lambda\gamma)}\leq 1$ gives, by part 1 of Lemma \ref{massdistribution}, that $\dim \mathcal{C}\geq \frac{-\log 2}{\log(\lambda\gamma)}$. Thus combining with Section 4.1 we get that $\dim R^*(z,\lambda)=\frac{-\log 2}{-\log\lambda\gamma}$ for all $\lambda\in (\frac{1}{2},\frac{1}{2\gamma})\cap\mathcal{E}$.This finished the proof of part (1) of Theorem \ref{thm:main}. 

\subsubsection*{Lower bound in Theorem \ref{thm:dynamical} (1)}
Finally for Theorem \ref{thm:dynamical} note that if we take $z\in F$ with $\pi(\underline{z})=z$ then the cantor set $\mathcal{C}$ constructed satisfies that $\mathcal{C}\subseteq \pi(R(\underline{z},(\ell_n)))$ where $\ell_n=\lceil n\frac{\log\lambda}{\log\gamma}\rceil$. Thus the above argument gives that
$$\dimH\pi(R(\underline{j},(\ell_n)))\geq \frac{-\log 2}{\log\lambda\gamma}$$
for any $\lambda\in [\frac{1}{2},\frac{1}{2\gamma}]\cap\mathcal{E}$ which combined with the upper bound in section 4.1 complete the proof of Part 1 of Theorem \ref{thm:dynamical}.

  \section{Proof of Lower bound for Theorem \ref{thm:main} (2): Unique expansion}\label{sec:uniquelower}

In this section we prove the lower bound for part (2) of Theorem \ref{thm:main}. In this case, we assume that the centre of the target $z\in F$ satisfies the unique expansion condition, Definition \ref{def:unique}. The argument holds for $\lambda\in ((2\gamma)^{-1},1)$ satisfying the exponential separation condition of Definition \ref{def:expsep}. The general outline of the argument is similar to the previous section in that we define a Cantor set of sparse returns, set up a measure $\bar \mu$ on it, and apply part 1 of Lemma \ref{massdistribution}. However, since in this region of $\lambda$ the dimension value is larger, there is much more impactful interaction between the target and the local behaviour of the measure $\bar\mu$, resulting in more different cases to investigate in the lemmas underneath.

Recall the dimension value from the statement and denote it by $t(\gamma)$, namely let
\[
t(\gamma):=2+\frac{\log \lambda}{\log 2} - \frac{\log \gamma}{\log(\gamma\lambda)}. 
\]
Let $(\ell_n)\in \N^\N$ and recall the notation 
$$R(\bz, (\ell_n)) :=\limsup_{n\to\infty}\{\bi\in \Sigma\mid \sigma^n(\bi)\in [\bz|_{\ell_n}]\}.$$

Let $\bz\in \Sigma, \lambda\in (1/2,1]$ and $\gamma\in (0,1)$. For each $n\in\N$ we let $\ell_n$ be the smallest integer such that $\lambda^{\ell_n}<\gamma^n$. Notice that in the notation of Definition \ref{def:alltheells} this corresponds to $\ell_n=\ell_2(n) $. (The notation $\ell_1(n)$ plays no role in this section, so we will suppress the subindex.)

We have that
\begin{equation}\label{inclusion}
\pi(R(\bz, (\ell_n)))\subset R^{*}(z,\gamma).
\end{equation}
Hence for lower bounds to $R^*(z, \gamma)$, we may just as well study this symbolic recurrence set. That is, if we can show that for some $\bz\in\Sigma$, $\gamma\in (0,1)$, $(\ell_n)$ as above and any $\lambda\in (1/2\gamma,1)\cap\mathcal{E}$ we have that
\begin{equation}\label{lbunique}
\dimH \pi( R(\bz, (\ell_n)) ) \geq 2+\frac{\log \lambda}{\log 2} - \frac{\log \gamma}{\log(\gamma\lambda)},  
\end{equation}
then the dimension result follows also for $R^*(\bz, \gamma)$.  The advantage of studying the dynamical version of the system is that this set is easier to analyse, and further, this method also gives the lower bound to Theorem \ref{thm:dynamical} immediately. 

\begin{remark}
The computations in this section work for any $\bz\in \Sigma$. However, they only match the upper bound from Section \ref{sec:upperbounds}, when $\overline{\pi}(\bz)$ has a unique $\lambda$-expansion. The case of $\pi(\bz)$ with multiple expansions is handled in Section \ref{sec:case3}. 
\end{remark}

We fix $\bz\in\Sigma$ and $\lambda\in (\frac{1}{2\gamma},1]\cap\mathcal{E}$.
To show \eqref{lbunique} we construct a Cantor subset and an appropriate mass distribution. Start by recalling
\[
R_{n}=\{\bi\in \Sigma\mid \sigma^n(\bi) \in [\bz|_{\ell_n}]\}.
\]

Let $(n_m)_{m\in \N}$ be any rapidly increasing sequence of integers (see underneath), and denote by 
\[
\mathcal{C}=\pi(\bigcap_{k=1}^\infty R_{n_m})\subset R^*, 
\]
the Cantor set of those points that return to the target at all of the times $(n_m)$. Denote $L_m=\sum_{i=1}^{m}\ell_{n_i}$, and assume that $n_m$ grows so fast that 
\begin{itemize}
\item $n_m>n_{m-1}+\ell_{n_{m-1}}$, and
\item $\frac{L_{m-1}}{n_m}\to 0$. 
\end{itemize}

Define a mass distribution $\mu$ on $\pi^{-1}(\mathcal{C})$ by setting for a finite word $n_{m-1}+\ell(n_{m-1})\le|\underline{x}|<n_m$ that if $x_{{n_m}+i}\neq z_i$ for some $1\leq i\leq \ell_{n_m}$ and $n_m+i\leq |x|$ then $\mu([\underline{x}])=0$. Otherwise if $n_{m-1}+\ell(n_{m-1})\leq|\underline{x}|<n_m$ we set
\[
\mu[\underline{x}]=(\tfrac 12)^{|\bx|-L_{m-1}}, 
\]
and for a finite word $n_m\le |\underline{x}|<n_m+\ell_{n_m}$, 
\[
\mu[\bx]=(\tfrac 12)^{n_m - L_{m-1}}. 
\]
This extends to a Borel probability measure $\mu$ on $\pi^{-1}(\mathcal C)$ by Carath\'eodory's Extension Theorem. Denote $\overline{\mu}=\pi_*\mu$ and note that $\overline{\mu}(\mathcal{C})=1$.

Let $R>0$ and $x\in \pi^{-1}(\mathcal{C})$. We would like to give upper bounds on $\overline{\mu}(Q(x, R))\lesssim R^s $ which will enable us to bound below $\liminf\frac{\log\overline{\mu}(Q(x,R))}{\log R}$ which will then give a lower bound on the dimension of $\mathcal{C}$ by Lemma \ref{massdistribution}.

Let $R\in (C2^{-(r+1)}, C2^{-r})$ for some $r\in \N$. Let $m$ be the closest return to $r$, that is, assume that $n_{m-1}+\ell_{n_{m-1}}\le r < n_m+\ell_{n_m}$. The arguments presented throughout will depend crucially on the size of $(\tfrac 12)^r$ relative to $\lambda^{n_m}, \lambda^{n_m}\gamma^{n_m}$. For $r, n\in \N$ we recall and define the following notations: 
\begin{itemize}
\item $\ell_n$: the smallest integer $l$ such that $\lambda^{l}< \gamma^n$, 
\item $k(r)\in \N$: the smallest integer $k$ for which $\lambda^{k}<(\tfrac 12)^r$,
\item $r_0\in \N$: the smallest integer for which $k(r_0)\geq\ell_{n_m}+n_m$, 
\item $r_{-1}\in \N$:  the largest integer  for which $(\tfrac 12)^{r_{-1}}\geq \lambda^{n_m}$,
\item $r_{1}\in\N$: the largest integer for which $(\tfrac 12)^{r_{1}}\geq \lambda^{n_{m+1}}$,
\item $C$: the constant from Lemma \ref{agreement}, 
\item $\mathit{o}(1)$ is a function that approaches $0$ with $R\to 0$ (or $r\to \infty$, or $m\to \infty$ as all of these happen simultaneously). 
\end{itemize}
In the lemmas underneath, we go through the possible relative positions of $r$ to these other quantities. Note that since $\lambda>\frac{1}{2\gamma}$ we have that $r_{-1}<r_0<n_m<n_m+\ell(n_m)$. We will now fix $m\in\N$ and cover all the possible cases.

\begin{lemma}\label{lem:dynamrlessr_0}
There is some $C_1>0$ such that the following holds: Let $\epsilon>0$, $x\in \mathcal{C}$, $r_{-1}\le r<r_0$ and $R\in [C2^{-(r+1)},C2^{-r})$. Then 
\[
\overline{\mu}(Q(x, R))\le C_1 (\tfrac 12)^{{r-L_{m-1}}}\lambda^{(n_m-r)(1-\epsilon)}.  
\]
\end{lemma}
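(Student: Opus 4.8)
The plan is to estimate $\overline\mu(Q(x,R))$ from above by bounding the number of codings $\underline y$ with $\pi(\underline y)\in Q(x,R)$ that carry positive $\mu$-mass, and then multiply by the maximal mass of a single relevant cylinder. Fix $x=\pi(\underline x)$ with $\underline x\in\pi^{-1}(\mathcal C)$, so $x_{n_i+j}=z_j$ for all $i$ and $1\le j\le \ell_{n_i}$. By Lemma~\ref{agreement}, if $|\pi(\underline y)-\pi(\underline x)|<R<C2^{-r}$ then $y_j=x_j$ for all $1\le j\le r$; so the first $r$ digits of any competitor are forced. Since we are in the range $r_{-1}\le r<r_0$, we have $r<r_0<n_m$, so none of the forced digits reach the return block starting at $n_m$: the measure still ``sees'' the upcoming constraint but the competitor digits beyond level $r$ are not yet pinned. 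Concretely, $\mu$ assigns to a cylinder of length $r$ in this regime the value $(\tfrac12)^{r-L_{m-1}}$ (using $n_{m-1}+\ell_{n_{m-1}}\le r<n_m$ and the definition of $\mu$), so $\overline\mu(Q(x,R))\le (\tfrac12)^{r-L_{m-1}}\cdot(\text{number of length-}r\text{ cylinders meeting }Q(x,R))$ is too crude — the point of the lemma is to extract the extra factor $\lambda^{(n_m-r)(1-\epsilon)}$ coming from the geometry in the $x$-direction.

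The key geometric observation is the one recorded in Remark~\ref{rem:structure}: a rectangle $E^{-n_m}(Q_{n_m})$ has $x$-side $\lambda^{n_m}\gamma^{n_m}$ which is much shorter than its $2^{-n_m}\gamma^{n_m}$ $y$-side is short — more importantly, at scale $R\approx 2^{-r}$ with $r<r_0$ we have $2^{-r}\gtrsim\lambda^{n_m+\ell_{n_m}}$ so the cube $Q(x,R)$ is wide in the $x$-direction relative to the relevant cylinder widths. So I would argue as follows. The digits $y_1,\dots,y_r$ are forced. For the digits $y_{r+1},\dots,y_{k(r)}$: by definition of $k(r)$, $\lambda^{k(r)}<2^{-r}\le\lambda^{k(r)-1}$, so knowing the $x$-coordinate of $\pi(\underline y)$ to within $R\approx 2^{-r}\approx\lambda^{k(r)}$ constrains $\pi_I(\sigma^r(\underline y))$ to lie in an interval of length $\asymp\lambda^{k(r)-r}$, i.e.\ commensurable with the generation-$(k(r)-r)$ cylinder scale of the $\{g_0,g_1\}$ system. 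Apply Corollary~\ref{cor:branching} (with $k$ replaced by $k(r)-r$, and the relevant point the $x$-coordinate rescaled appropriately): the number of admissible words $(y_{r+1},\dots,y_{k(r)})$ is at most $C_1\,2^{k(r)-r}\lambda^{(k(r)-r)(1-\epsilon)}$. For digits beyond $k(r)$ there is no $x$-direction constraint, but those digits do not affect $Q(x,R)$ membership up to constants (Remark~\ref{rem:structure}), so we do not count them. Hence
\[
\overline\mu(Q(x,R))\le C_1\,2^{k(r)-r}\lambda^{(k(r)-r)(1-\epsilon)}\cdot \mu\big([\text{any cylinder of length }k(r)]\big).
\]
Now I plug in the $\mu$-mass: a length-$k(r)$ cylinder that carries positive mass and is consistent with $\underline x$ has mass $(\tfrac12)^{k(r)-L_{m-1}}$ as long as $k(r)<n_m$; since $r<r_0$ means $k(r)<n_m+\ell_{n_m}$, and the return block contributes nothing new until level $n_m$, the bookkeeping gives mass $\le (\tfrac12)^{k(r)-L_{m-1}}$ (with the usual ``freeze at $n_m$'' caveat absorbed into the constant, as $k(r)$ can exceed $n_m$ by at most $\ell_{n_m}$, a lower-order term in the exponent that I would track explicitly). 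Combining, the $2^{k(r)-r}$ and $(\tfrac12)^{k(r)}$ cancel, leaving $\overline\mu(Q(x,R))\le C_1(\tfrac12)^{r-L_{m-1}}\lambda^{(k(r)-r)(1-\epsilon)}$, and since $k(r)\ge\ell_{n_m}+n_m\ge n_m$ (wait — actually in this regime $k(r)<\ell_{n_m}+n_m$ by $r<r_0$, but $k(r)\ge n_m$ precisely when $r\ge r_{-1}$, which is our hypothesis, up to an $O(1)$ shift): one gets $k(r)-r\ge n_m-r$ after using $k(r)\ge n_m$ and that $k$ is roughly linear, modulo the $o(1)$-type corrections absorbed by replacing $1-\epsilon/2$ with $1-\epsilon$. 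This yields the claimed bound $\overline\mu(Q(x,R))\le C_1(\tfrac12)^{r-L_{m-1}}\lambda^{(n_m-r)(1-\epsilon)}$.

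The main obstacle I anticipate is the careful handling of the constant $C$ and the $O(1)$ shifts between $r$, $k(r)$, $n_m$, $r_{-1}$, $r_0$: one must verify that $k(r)\ge n_m$ follows from $r\ge r_{-1}$ (up to additive constants absorbed into $C_1$), that the portion $k(r)-r$ dominates $n_m-r$ in the exponent, and that the ``freeze the measure at level $n_m$'' rule in the definition of $\mu$ does not spoil the cancellation when $n_m\le k(r)<n_m+\ell_{n_m}$ (it does not, because between $n_m$ and $n_m+\ell_{n_m}$ the mass is constant $(\tfrac12)^{n_m-L_{m-1}}$ while our competitor count already incorporated the $\le \ell_{n_m}$ additional forced digits via Corollary~\ref{cor:branching} — the key is that $\ell_{n_m}/n_m\to 0$ together with $\lambda<1$ means $\lambda^{\ell_{n_m}}$-type factors are benign once we pass to $1-\epsilon$). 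A secondary point is making sure Corollary~\ref{cor:branching} is applied to the correct rescaled point and scale: the $x$-coordinate of $\pi(\sigma^r(\underline x))$, expanded back to $[0,1]$ by the appropriate power of $\lambda^{-r}$, with $\rho$ chosen to swallow the constant $C$ from Lemma~\ref{agreement}. Once these indexing issues are pinned down the inequality is immediate.
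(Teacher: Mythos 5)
Your overall strategy (forced prefix of length $r$ via Lemma \ref{agreement}, a branching count via Corollary \ref{cor:branching}, then multiplication by a single cylinder mass) is the same as the paper's, but the execution of the mass-bookkeeping step contains a genuine error that the final inequality only survives because of a compensating error. You count admissible words $(y_{r+1},\dots,y_{k(r)})$ over the \emph{whole} range up to $k(r)$ and then multiply by a cylinder mass that you take to be $(\tfrac 12)^{k(r)-L_{m-1}}$. But in the regime $r_{-1}\le r<r_0$ one has $n_m\le k(r)<n_m+\ell_{n_m}$, and by the definition of $\mu$ the mass of a positive-mass cylinder of length $k(r)$ is \emph{frozen} at $(\tfrac 12)^{n_m-L_{m-1}}$, which is \emph{larger} than your $(\tfrac 12)^{k(r)-L_{m-1}}$ by the factor $2^{k(r)-n_m}$. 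This factor can be as large as $2^{\ell_{n_m}}$, and since $\ell_{n_m}\asymp n_m\asymp r$ it is exponential in $r$ --- it is emphatically not "a lower-order term in the exponent" nor absorbable into $C_1$. If you insert the correct mass $(\tfrac 12)^{n_m-L_{m-1}}$ into your product, your bound becomes $C_1(\tfrac 12)^{r-L_{m-1}}\lambda^{(n_m-r)(1-\epsilon)}\cdot(2\lambda^{1-\epsilon})^{k(r)-n_m}$, which overshoots the claimed bound by $(2\lambda^{1-\epsilon})^{k(r)-n_m}\gg 1$ (recall $2\lambda>1$). The source of the overshoot is that your application of Corollary \ref{cor:branching} on the segment $[r,k(r)]$ treats the digits in $[n_m,k(r)]$ as free, contributing $2^{k(r)-n_m}\lambda^{(k(r)-n_m)(1-\epsilon)}$ spurious choices, when in fact any cylinder of positive $\mu$-mass must have those digits equal to the corresponding digits of $\bz$, so there is exactly one choice there.

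The repair is precisely what the paper does: truncate the branching count at level $n_m$ rather than $k(r)$. Using $r\ge r_{-1}$ (so $(\tfrac 12)^r\lesssim\lambda^{n_m}$), the condition $|\sum_{j=r}^{n_m}(i_j-x_j)\lambda^{j-r}|\lesssim(\tfrac 12)^r\lambda^{-r}$ places $\pi_I$ of the length-$(n_m-r)$ block in an interval of radius $\lesssim\lambda^{n_m-r}$, so Corollary \ref{cor:branching} gives at most $C_1 2^{n_m-r}\lambda^{(n_m-r)(1-\epsilon)}$ level-$n_m$ cylinders meeting $Q(x,R)$; each such cylinder (and its unique positive-mass extension through the return block) has mass $(\tfrac 12)^{n_m-L_{m-1}}$, and the product gives the stated bound. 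Your closing remark that the count "already incorporated the $\le\ell_{n_m}$ additional forced digits" is exactly backwards: the count incorporated them as free, which is what breaks the cancellation once the cylinder mass is stated correctly.
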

\begin{proof}
Notice that in order for a sequence $\bi\in \Sigma$ to satisfy $\pi(\bi)\in Q=Q(x, R)$ we have $\bx|_r=\bi|_r$ by Lemma \ref{agreement}. We will have to give an estimate for the number of next $n_m-r$ symbols in $\bi$ which would still let $\pi(\bi)\in Q$, and after that until $n_m+\ell_{n_m}$ they will have to agree with $\bz$. By the definition of $\mu$, their mass does not change after this level. Denote 
\[
N=\#\{\bi\in \Sigma^{n_m}\mid \pi[\bi]\cap Q(x, R)\neq\emptyset\}.
\]
By the above discussion we have the estimate
\[
\overline{\mu}(Q)\le N(\tfrac 12)^{n_m-L_{m-1}}. 
\]

The rest of the argument focuses on bounds for $N$. Note that in order for $\pi[\bi]\cap Q\neq \emptyset$, we need 
\[
\sum_{j=1}^{n_m}(\bi_j - \bx_j)\lambda^j\le C(\tfrac 12)^r\le (\tfrac 12)^r,  
\]
but since we know that $\bi_j=\bx_j$ for all $j=1, \dots, r$, this translates to 
\[
\sum_{j=r}^{n_m}(\bi_j - \bx_j)\lambda^j\le (\tfrac 12)^r,   
\]
and further to 
\[
\sum_{j=r}^{n_m}(\bi_j - \bx_j)\lambda^{j-r}\le (\tfrac 12)^r\lambda^{-r}.   
\]
Note that since $r\ge r_1$, we have $(\tfrac 12 )^r\le \lambda^{n_m}$, and hence for some $x'\in [0,1]$, 
\[
N\le \#\{\bi \in \Sigma_{n_m-r}\mid \bar\pi[\bi]\cap B(x', \lambda^{n_m-r})\}
\]
To estimate this number, we can apply Corollary \ref{cor:branching}, and obtain
\begin{equation}\label{eq:N}
N\le  C_1\frac{\lambda^{(n_m-r)(1-\epsilon)}}{(\tfrac 12)^{n_m-r}}. 
\end{equation}
Together with the argument from the beginning of the proof this implies 
\[
\overline{\mu}(Q)\le C_1(\tfrac12)^{n_m-L_{m-1}}\frac{\lambda^{(n_m-r)(1-\epsilon)}}{(\tfrac 12)^{n_m-r}}. 
\]
\end{proof}

\begin{remark}
 We can apply Lemma \ref{lem:dynamrlessr_0}, to obtain that 
\begin{eqnarray*}
\frac{\log \overline{\mu}(Q(x,R))}{\log R}&\geq&\frac{(n_m-r)(1-\epsilon)\log\lambda + (r-L_{m-1})\log (\tfrac 12)}{(r+1)\log (C(\tfrac 12))}+\frac{\log C_1}{(r+1)\log (C(\tfrac 12))}\\
&\geq&\frac{(n_m-r)(1-2\epsilon)\log \lambda}{(r+1)\log C(\tfrac 12)}+\frac{(r-L_{m-1})\log (\tfrac 12)}{(r+1)\log (C(\tfrac 12))}+o(1)\\
&\geq& \frac{n_m(1-\epsilon)\log\lambda}{(r+1)\log (C(\tfrac 12))}+\frac{r(1-\epsilon)\log\lambda}{(r+1)\log (2C)}+\frac{(r-L_{m-1})\log(\tfrac 12)}{(r+1)\log (C(\tfrac 12))}+o(1).
\end{eqnarray*}
We have that $L_{m-1}=\mathit{o}(r_{-1})$ and $n_m/r\geq \frac{n_m}{r_0}\to\frac{\log 2}{-\log(\lambda\gamma)}$ as $m\to\infty$. Thus we have 
\begin{eqnarray*}
\frac{\log \overline{\mu}(Q(x, R))}{\log R}&\geq& \frac{\log 2}{-\log(\lambda\gamma)}\frac{(1-\epsilon)\log\lambda}{-\log 2}+\frac{(1-\epsilon)\log\lambda}{\log 2}+1+\mathit{o}(1)\\
&\geq& \frac{(1-\epsilon)\log\lambda}{\log(\lambda\gamma)}+\frac{(1-\epsilon)\log\lambda}{\log 2}+1+\mathit{o}(1)\\
&=& (1-\epsilon)+\frac{(1-\epsilon)\log\gamma}{\log(\lambda\gamma)}+\frac{(1-\epsilon\log\lambda}{\log 2}+1+\mathit{o}(1)\\
&=&2-\epsilon+\frac{(1-\epsilon)\log\gamma}{\log(\lambda\gamma)}+\frac{(1-\epsilon\log\lambda}{\log 2}+\mathit{o}(1).
\end{eqnarray*}
Taking $\epsilon$ to $0$ gives the dimension value $t(\gamma)$ from Theorem \ref{thm:main} (2). 
\end{remark}

\begin{lemma}\label{lma:r0rn}
There exists $C_2>0$ such that for $\epsilon>0$, $x\in\mathcal{C}$, $r_0\leq r<n_m$ and $R\in [C2^{-r-1},C2^{-r})$ we have that 
\[
\overline{\mu}(Q(x, R))\le C_2\lambda^{(-r-\ell_{n_m})}(\tfrac12)^{r+r(1-\epsilon)-L_{m-1}}. 
\]
\end{lemma}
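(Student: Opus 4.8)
\textbf{Proof plan for Lemma \ref{lma:r0rn}.}

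The plan is to follow the same two-stage structure as in the proof of Lemma \ref{lem:dynamrlessr_0}: first bound $\overline{\mu}(Q(x,R))$ by the number $N$ of admissible length-$n_m$ prefixes of sequences hitting $Q=Q(x,R)$ times the common mass of a level-$n_m$ cylinder, then estimate $N$ using Corollary \ref{cor:branching}. The difference is that now $r\ge r_0$, so $k(r)\ge \ell_{n_m}+n_m$, which means the scale $(\tfrac12)^r$ is \emph{smaller} than $\lambda^{n_m}\gamma^{n_m}\approx\lambda^{n_m+\ell_{n_m}}$. Consequently, knowing $\pi(\bi)\in Q$ pins down not just the first $r$ symbols of $\bi$ via Lemma \ref{agreement}, but in fact constrains $\bi$ all the way down to level $k(r)\ge n_m+\ell_{n_m}$; in particular the digits between $n_m$ and $n_m+\ell_{n_m}$, which by membership in $\mathcal{C}$ must coincide with $z_1,\dots,z_{\ell_{n_m}}$, are automatically forced, so they contribute no extra freedom. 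The only branching that matters is therefore in the block of digits from position $r+1$ up to position $n_m$, exactly as before, but the measure bookkeeping is different because the cylinder weight stabilises at level $n_m$ at value $(\tfrac12)^{n_m-L_{m-1}}$.

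First I would write, for $\bi$ with $\pi(\bi)\in Q$, that $\bi|_r=\bx|_r$ by Lemma \ref{agreement}, and that for the count
\[
N=\#\{\bi\in\Sigma_{n_m}\mid \pi[\bi]\cap Q\ne\emptyset,\ \bi|_r=\bx|_r\}
\]
we have $\overline{\mu}(Q)\le N\,(\tfrac12)^{n_m-L_{m-1}}$ — here I use that any admissible $\bi$ extending such a prefix has its digits $n_m+1,\dots,n_m+\ell_{n_m}$ equal to those of $\bz$, after which the $\mu$-mass no longer decreases, so each level-$n_m$ cylinder carries mass exactly $(\tfrac12)^{n_m-L_{m-1}}$. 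Next, writing $\sum_{j=r+1}^{n_m}(i_j-x_j)\lambda^j$ small (bounded by $C(\tfrac12)^r\le(\tfrac12)^r$, since $\pi[\bi]$ must intersect $Q$ and the tail $\sum_{j>n_m}$ contributes $O(\lambda^{n_m})$, which is dominated once $r<n_m$ is combined with $r\ge r_0$ — one should check $\lambda^{n_m}\lesssim (\tfrac12)^r$ fails here, so actually the tail term must be absorbed by enlarging the target to a ball of radius a constant times $\max\{(\tfrac12)^r,\lambda^{n_m}\}$), dividing by $\lambda^r$, this exhibits the relevant $n_m-r$ free digits as coding a point within distance $\lesssim(\tfrac12)^r\lambda^{-r}$ of a fixed point; since $r<n_m$, one has $(\tfrac12)^r\lambda^{-r}\ge\lambda^{n_m-r}$ only when $(\tfrac12)^r\ge\lambda^{n_m}$, which fails for $r\ge r_0>r_{-1}$, so in fact the target scale is $\asymp(\tfrac12)^r$ and Corollary \ref{cor:branching} (applied at level $n_m-r$ with $\rho$ chosen so that $\rho\lambda^{n_m-r}\asymp(\tfrac12)^r\lambda^{-r}$, i.e.\ absorbing the mismatch into the constant after noting $(\tfrac12)^r\lambda^{-r}=\lambda^{-r}2^{-r}$) gives $N\le C_2\,2^{n_m-r}\lambda^{(n_m-r)(1-\epsilon)}\cdot(\text{correction})$. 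Combining the two displays, and grouping powers of $2$ and $\lambda$ so that $(\tfrac12)^{n_m}\cdot 2^{n_m-r}=(\tfrac12)^r$ and the $\lambda^{(n_m-r)(1-\epsilon)}$ factor is rewritten against $\lambda^{-r-\ell_{n_m}}$ using $\lambda^{\ell_{n_m}}\asymp\gamma^{n_m}$ and $\lambda^{n_m}\asymp(\tfrac12)^r\cdot(\text{stuff})$, should land exactly on the claimed bound $C_2\lambda^{-r-\ell_{n_m}}(\tfrac12)^{r+r(1-\epsilon)-L_{m-1}}$.

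The main obstacle I anticipate is the careful matching of exponents in this last algebraic step: one has to use the defining relations $\lambda^{k(r)}\asymp(\tfrac12)^r$, $k(r_0)\approx n_m+\ell_{n_m}$ and $\lambda^{\ell_{n_m}}\asymp\gamma^{n_m}$ to convert the ``geometric'' estimate $N(\tfrac12)^{n_m-L_{m-1}}\lesssim 2^{n_m-r}\lambda^{(n_m-r)(1-\epsilon)}(\tfrac12)^{n_m-L_{m-1}}$ into the exact form stated, and in particular to see that the spurious factor $\lambda^{-r-\ell_{n_m}}$ is the right way to record $\lambda^{(n_m-r)(1-\epsilon)}$ after trading $\lambda^{n_m}$ for the $(\tfrac12)^r$-scale; a secondary subtlety is justifying that Corollary \ref{cor:branching} applies with the correct scale $\lambda^{n_m-r}$ rather than a smaller one, which hinges on the inequality $r<n_m$ together with $r\ge r_0$ forcing $(\tfrac12)^r\le\lambda^{n_m}$, so that after the change of variables the cylinders we count genuinely sit inside a ball of radius comparable to $\lambda^{n_m-r}$. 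Once these bookkeeping points are pinned down, the rest is routine substitution.
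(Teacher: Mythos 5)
Your two--factor decomposition $\overline{\mu}(Q)\le N\cdot(\tfrac 12)^{n_m-L_{m-1}}$, with $N$ the number of admissible level-$n_m$ prefixes, is exactly the argument of Lemma \ref{lem:dynamrlessr_0}, and it is \emph{not} enough in the range $r_0\le r<n_m$. Carrying it through with the bound \eqref{eq:N} gives only
\[
\overline{\mu}(Q(x,R))\lesssim \lambda^{(n_m-r)(1-\epsilon)}(\tfrac 12)^{\,r-L_{m-1}},
\]
and one can check (using $(\tfrac12)^{r}\approx\lambda^{k(r)}$ and $\ell_{n_m}\approx n_m\log\gamma/\log\lambda$) that this is strictly weaker than the claimed bound $\lambda^{-r-\ell_{n_m}}(\tfrac12)^{r+r(1-\epsilon)-L_{m-1}}$ once $r$ is close to $n_m$, precisely because in this section $\lambda\gamma>\tfrac12$. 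Feeding the weaker bound into the subsequent local-dimension computation would not produce $t(\gamma)$.

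The missing idea is a \emph{third} factor. Since $r\ge r_0$, the cube $Q(x,R)$ has radius $\lesssim\lambda^{n_m+\ell_{n_m}}$, so it does not contain the level-$(n_m+\ell_{n_m})$ cylinders it meets; it only captures a fraction of their mass. Your own observation that the digits between $n_m$ and $n_m+\ell_{n_m}$ are forced is correct but beside the point: the gain comes from the digits \emph{after} level $n_m+\ell_{n_m}$, where the measure resumes halving and where membership of $\pi(\bi)$ in $Q$ constrains the first coordinate down to level $k(r)>n_m+\ell_{n_m}$. A second application of Corollary \ref{cor:branching} (to $\bar\pi^{-1}(B(x_1,C(\tfrac12)^r))\cap\bar\pi[\bi|_{n_m+\ell_{n_m}}]$) yields
\[
\overline{\mu}\bigl(Q(x,R)\cap\pi[\bi|_{n_m+\ell_{n_m}}]\bigr)\lesssim\Bigl(\tfrac{(1/2)^r}{\lambda^{\ell_{n_m}+n_m}}\Bigr)^{1-\epsilon},
\]
and it is the product of all three factors, $N\cdot(\tfrac12)^{n_m-L_{m-1}}\cdot\bigl((\tfrac12)^r\lambda^{-\ell_{n_m}-n_m}\bigr)^{1-\epsilon}$, that simplifies to the stated estimate. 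Without this factor no amount of exponent bookkeeping will ``land exactly on the claimed bound'', so the final paragraph of your plan cannot be completed as written.
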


\begin{proof}
We let $\underline{x}\in\Sigma$ satisfy that $\pi(\underline{x})=x$ and note that as before for $\bi\in\Sigma$ to satisfy $\pi(\bi)\in Q(x, R)=Q$ we need to have $\bi|_r=\bx|_r$ by definition of $C$. We then need an estimate for the number of $\bi|_r=\bx|_r$ with also $\bi|_{n_m}$ such that $\pi[\bi|_{n_m}]\cap Q\neq \emptyset$. This quantity we have just defined as $N$ and estimated from above in Lemma  \ref{lem:dynamrlessr_0}, see \eqref{eq:N}.

From there, between the times $n_m$ and $n_m+\ell_{n_m}$, the mass does not change. But, since $r>r_0$, not all of the mass of the cylinders on level $n_m+\ell_{n_m}$ is inside $Q$. That is, for a projected cylinder $\pi[\bi|_{n_m+\ell_{n_m}}]$ that intersects $Q$, we need to find the proportion of mass within $Q$. Fix some such $\bi|_{n_m+\ell_{n_m}}$. Denoting $x_1$ the first coordinate of $x$, we have:
\[
\pi^{-1}(Q\cap \pi[\bi|_{n_m+\ell_{n_m}}])\le\bar\pi^{-1} (B(x_1, C(\tfrac 12)^{r})\cap \bar\pi[\bi|_{n_m+\ell_{n_m}}]). 
\]
(Recall that $\bar\pi$ is the coding map from $\bar\pi: \Sigma \to [0,1]$.) Hence we can apply Corollary \ref{cor:branching} to obtain the bound: 
\[
\overline{\mu}(Q(x,R)\cap \pi[\bi|_{n_m+\ell_{n_m}}])\le C_1\left(\frac{(\tfrac 12)^r}{\lambda^{\ell_{n_m}+n_m}}\right)^{1-\epsilon}.
\]
Combining all of the above, we are left with 
\begin{eqnarray*}
\overline{\mu}(Q(x,R))&\leq& (\tfrac 12)^{n_m-L_{m-1}}NC_1\left(\frac{(\tfrac 12)^r}{\lambda^{\ell_{n_m}+n_m}}\right)^{1-\epsilon}\\
&\leq& C_1^2\lambda^{(n_m-r-\ell_{n_m}-n_m)(1-\epsilon)}(\tfrac12)^{r+r(1-\epsilon)-L_{m-1}}\\
&\leq& C_1^2\lambda^{(-r-n_m)(1-\epsilon)}(\tfrac12)^{r+r(1-\epsilon)-L_{m-1}}\\
&\leq& C_2\lambda^{-r-n_m}(\tfrac12)^{r+r(1-\epsilon)-L_{m-1}} 
\end{eqnarray*}
for $C_2=C_1^2$, which was the claim. 
\end{proof}

\begin{remark}
As before we look at what this means for the local dimension as $m$ approaches infinity. We have  
\begin{align*}
\frac{\log \overline{\mu}(Q(x,R))}{\log R}&\geq
\frac {\log C_2+(-r-\ell_{n_m})(\log\lambda)-(r(2-\epsilon)-L_{m-1})\log (2)}{\log C-(r+1)\log 2}\\
&\geq  \frac{\log (C_2)}{\log C-(r+1)\log 2}+\frac{(-r-\ell_{n_m})(\log\lambda)}{\log C-(r+1)\log 2}-\frac{(r(2-\epsilon)-L_{m-1})\log (2)}{\log C-(r+1)\log 2}. 
\end{align*}
The first part of the sum goes to $0$ as $m\to\infty$. For the second part we use that $\frac{\log\lambda}{\log 2}<0$ along with 
$$\frac{r+\ell_{n_m}}{r}\leq\frac{r_0+\ell_{n_m}}{r_0}\to \frac{\log(\lambda\gamma)-\frac{\log(\gamma)\log(2)}{\log\lambda}}{\log(\lambda\gamma)}\text{ as }m\to\infty$$
and finally for the third part $\lim_{m\to\infty} \frac{L_{m-1}}{r}=0$. This gives the following bound 
\begin{eqnarray*}
\frac{\log \overline{\mu}(Q(x,R))}{\log R} &\geq& 2-\epsilon+\left(\frac{\log\lambda}{\log 2}\right)\left(\frac{\log(\lambda\gamma)-\frac{\log(\gamma)\log(2)}{\log\lambda}}{\log(\lambda\gamma)}\right)+\mathit{o}(1)\\
&\geq&2-\epsilon-\frac{\log\gamma}{\log(\lambda\gamma)}+\frac{\log\lambda}{\log 2}+\mathit{o}(1)\\
&\geq& t(\gamma)-\epsilon+\mathit{o}(1).
\end{eqnarray*}
Again taking $m\to\infty$ and then $\epsilon\to 0$  yields the desired bound.
\end{remark}

\begin{lemma}\label{lma:r0rnl}
There exists $C_3>0$ satisfying the following: Let $\epsilon>0$,  $x\in \mathcal{C}$, $r_0\le n_m< r\le n_m+\ell(n_m)$, and $R\in [C2^{-(r+1)},C2^{-r})$
\[
\overline{\mu}(Q(x,R))\le C_3\lambda^{(-\ell_{n_m}-n_m)}(\tfrac12)^{n_m+r - r\epsilon-L_{m-1}}. 
\]
\end{lemma}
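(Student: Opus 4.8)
The plan is to follow the same template as Lemmas \ref{lem:dynamrlessr_0} and \ref{lma:r0rn}, tracking how the mass of the measure $\overline{\mu}$ is distributed across the relevant scales, but now in the regime where $r$ has passed $n_m$ and lies inside the ``frozen'' block $[n_m, n_m+\ell(n_m)]$. First I would fix $\underline x\in \Sigma$ with $\pi(\underline x)=x$ and observe, via Lemma \ref{agreement}, that any $\bi$ with $\pi(\bi)\in Q(x,R)=Q$ must agree with $\bx$ on the first $r$ coordinates; since $r\ge n_m$ and $x\in\mathcal C$, this already forces $\bi|_{n_m}=\bx|_{n_m}$ and, because $x\in R_{n_m}$, the digits $i_{n_m+1},\dots,i_r$ are pinned to $\bz|_{r-n_m}$, so no branching mass is gained in that range. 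The quantity $N$ counting admissible prefixes of length $n_m$ collapses (it is just $1$ up to constants, since $\bi|_{n_m}=\bx|_{n_m}$), so the only source of the bound is: the base mass $(\tfrac12)^{n_m-L_{m-1}}$ assigned to the level-$n_m$ cylinder, times the proportion of that cylinder's mass that actually lies in $Q$.

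For that last proportion I would argue exactly as in the second half of Lemma \ref{lma:r0rn}: the level $n_m+\ell(n_m)$ cylinders refine the level-$n_m$ one, and only those meeting $Q$ contribute. Projecting to the first coordinate, a level-$(n_m+\ell_{n_m})$ cylinder $\bar\pi[\bi|_{n_m+\ell_{n_m}}]$ that meets $Q$ is contained (up to the constant $C$) in a ball of radius $C(\tfrac12)^r$ around $x_1$, and the number of level-$(n_m+\ell_{n_m})$ cylinders meeting that ball is controlled by Corollary \ref{cor:branching} with $\rho$ absorbing the constant: one gets $\lesssim 2^{k}\lambda^{k(1-\epsilon)}$ where $\lambda^{k}\asymp (\tfrac12)^r/\lambda^{n_m+\ell_{n_m}}$ is the ratio between the ball radius and the cylinder length $\lambda^{n_m+\ell_{n_m}}$; equivalently the admissible count is $\lesssim \big((\tfrac12)^r\lambda^{-(n_m+\ell_{n_m})}\big)^{1-\epsilon}$ times the corresponding power of $2$. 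Each such cylinder carries mass $(\tfrac12)^{n_m-L_{m-1}}$ under $\mu$ (the mass is frozen across $[n_m,n_m+\ell_{n_m}]$). Multiplying the count of contributing cylinders by their common mass, and simplifying using $\lambda^{\ell_{n_m}}\asymp \gamma^{n_m}$, should give
\[
\overline\mu(Q(x,R))\ \lesssim\ (\tfrac12)^{n_m-L_{m-1}}\Big(\tfrac{(\tfrac12)^r}{\lambda^{n_m+\ell_{n_m}}}\Big)^{1-\epsilon} 2^{\,r-n_m-\ell_{n_m}}\ \lesssim\ \lambda^{-(n_m+\ell_{n_m})}(\tfrac12)^{\,n_m+r-r\epsilon-L_{m-1}},
\]
where in the last step the $\lambda^{(1-\epsilon)}$ and $2$ powers recombine and the $\epsilon$ appears only on the $(\tfrac12)^r$ factor, as in the statement; taking $C_3$ to be the product of the two constants from the two applications of Corollary \ref{cor:branching} finishes it. One bookkeeping point to handle carefully: when $r=n_m$ exactly the cylinders at level $n_m+\ell_{n_m}$ are still entirely inside $Q$ up to constants, so the bound degenerates gracefully to the $r_0\le r<n_m$ estimate, which is consistent.

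The main obstacle, as in the neighbouring lemmas, is making the ``proportion of mass in $Q$'' step rigorous: one must be careful that the Corollary \ref{cor:branching} count is applied at exactly the right scale, i.e. that the ratio of the radius $C(\tfrac12)^r$ to the cylinder length $\lambda^{n_m+\ell_{n_m}}$ is the correct $\lambda^{k}$-scale and that the inequality $r\le n_m+\ell(n_m)$ guarantees this ratio is $\ge 1$ (so the count is meaningful rather than trivially $\le 1$) — and conversely that $r>r_0$, i.e. $k(r)>n_m+\ell_{n_m}$ fails, which is precisely why a nontrivial fraction is cut out. Also one must confirm that no mass is lost or double-counted across the boundary $r=n_m$ where the measure switches from the ``uniform'' regime to the ``frozen'' regime; the definition of $\mu$ is set up so that $\mu[\bx]=(\tfrac12)^{n_m-L_{m-1}}$ for all $n_m\le|\bx|<n_m+\ell_{n_m}$, so this is consistent, but it needs to be invoked explicitly. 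Everything else is the same algebraic simplification (substituting $\ell_{n_m}\approx n_m\log\gamma/\log\lambda$ in the eventual local-dimension computation) that the companion remarks carry out, and I would defer that to the remark following the lemma rather than the proof itself.
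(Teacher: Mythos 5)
Your overall strategy is the paper's: since $r>n_m$ and $\mu$ is supported on sequences agreeing with $\bz$ in positions $n_m+1,\dots,n_m+\ell_{n_m}$, the cube $Q$ meets only one level-$(n_m+\ell_{n_m})$ cylinder charged by $\mu$, of mass $(\tfrac12)^{n_m-L_{m-1}}$, and the whole content of the lemma is the fraction of that mass lying inside $Q$, estimated through the first-coordinate projection via Corollary \ref{cor:branching}. However, the execution of that key step is wrong. The objects that Corollary \ref{cor:branching} counts at scale $\lambda^{k}\asymp(\tfrac12)^{r}\lambda^{-(n_m+\ell_{n_m})}$ are the level-$k(r)$ refinements (relative depth $k=k(r)-n_m-\ell_{n_m}$), and by the definition of $\mu$ each of these carries mass $(\tfrac12)^{n_m-L_{m-1}}2^{-(k(r)-n_m-\ell_{n_m})}$: the mass is frozen only on $[n_m,n_m+\ell_{n_m}]$ and halves at every level thereafter. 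You instead multiply the count $\lesssim 2^{k}\lambda^{k(1-\epsilon)}$ by the frozen mass $(\tfrac12)^{n_m-L_{m-1}}$. Carried out literally, this over-counts by $2^{k(r)-n_m-\ell_{n_m}}$, which is exponentially large in $n_m$, and the resulting estimate no longer implies the lemma for small $\epsilon$. The formula you then display carries instead an extraneous factor $2^{r-n_m-\ell_{n_m}}\le 1$, which follows from neither computation and asserts a bound smaller than the argument can deliver. The correct pairing (fine-scale count times fine-scale mass) makes the powers of $2$ cancel and yields exactly $(\tfrac12)^{n_m-L_{m-1}}\bigl(2^{-r}\lambda^{-(n_m+\ell_{n_m})}\bigr)^{1-\epsilon}$, which is the paper's bound and rearranges to the statement.

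Two of the sanity checks in your final paragraph are also reversed. Since $\lambda>\tfrac{1}{2\gamma}$ we have $r_0<n_m$, so already at $r=n_m$ the ball of radius $\approx 2^{-n_m}$ is much smaller than the projected cylinder of width $\lambda^{n_m+\ell_{n_m}}\approx(\lambda\gamma)^{n_m}>2^{-n_m}$ and does not contain it; and the ratio $(\tfrac12)^{r}\lambda^{-(n_m+\ell_{n_m})}$ is $\lesssim 1$ throughout this range of $r$, not $\ge 1$ — it is precisely this that makes $k(r)-n_m-\ell_{n_m}\ge 0$ and the relative count via Corollary \ref{cor:branching} meaningful.
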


\begin{proof}
Since $r>n_m$, the cube $Q$ only intersects one of the projected cylinder sets $\pi[\bi|_{n_m+\ell_{n_m}}]$ from level $\ell(n_m)+n_m$, but since $r>r_0$, it does not contain it. We need to estimate the proportion of measure $\bar\mu(\pi[\bi|_{n_m+\ell_{n_m}}])$ that is inside $Q$. Here, as before in the proof of Lemma \ref{lma:r0rn}, we make use of the fact that 
\[
\pi^{-1}(Q\cap \pi[\bi|_{n_m+\ell_{n_m}}])\le \bar\pi^{-1} (B(x_1, C(\tfrac 12)^{r})\cap \bar\pi[\bi|_{n_m+\ell_{n_m}}]). 
\]
Indeed, as above in the proof of Lemma \ref{lma:r0rn}, an application of Corollary \ref{cor:branching} gives 
\begin{eqnarray*}
\overline{\mu}(Q(x,R))&\le&(\tfrac 12)^{n_m-L_{m-1}}C_1\frac{(\tfrac 12)^{r(1-\epsilon)}}{\lambda^{(\ell_{n_m}+n_m)(1-\epsilon)}}\\
&\le& C_1\lambda^{(-\ell_{n_m}-n_m)(1-\epsilon)}(\tfrac12)^{n_m+r - r\epsilon-L_{m-1}}\\
&\leq& C_3\lambda^{(-\ell_{n_m}-n_m)}(\tfrac12)^{n_m+r - r\epsilon-L_{m-1}}.
\end{eqnarray*}

\end{proof}

\begin{remark}
 The estimate from Lemma \ref{lma:r0rnl} gives that
$$
\frac{\log \overline{\mu}(Q(x,R))}{\log R}\ge \frac{(-\ell_{n_m}-n_m)\log \lambda+(n_m+r-r\epsilon-L_{m-1})\log(\tfrac 12)}{-C(r+1)\log 2}+\frac{\log C_3}{\log R}.$$
We can now use that $r\le \ell(n_m)+n_m$ to bound 
$$\frac{(-\ell_{n_m}-n_m)\log \lambda}{-C(r+1)\log 2}=\frac{(\ell_{n_m}+n_m)\log \lambda}{C(r+1)\log 2}\geq\frac{\log\lambda}{\log 2}$$
and
$$\frac{n_m+r(1-\epsilon)}{r}=\frac{n_m+r}{r}-\epsilon\geq \frac{2n_m+\ell(n_m)}{n_m+\ell_{n_m}}-\epsilon.$$
This gives 
\begin{eqnarray*}
\frac{\log \overline{\mu}(Q(x,R))}{\log R}&\geq& 1+\frac{\log\lambda}{\log 2}+\frac{1}{1+\frac{\log\gamma}{\log\lambda}}-\epsilon+\mathit{o}(1)\\
&\geq&2+\frac{\log\lambda}{\log 2}-\frac{\frac{\log\gamma}{\log\lambda}}{1+\frac{\log\gamma}{\log\lambda}}-\epsilon+\mathit{o}(1)\\
&\geq&2+\frac{\log\lambda}{\log 2}-\frac{\log\gamma}{\log(\gamma\lambda)}-\epsilon+\mathit{o}(1)\\
&\geq& t(\gamma)-\epsilon+o(1)
\end{eqnarray*} 
and letting $m\to\infty$ and then $\epsilon\to 0$ completes the bound in this region.
\end{remark}

\begin{lemma}\label{lem:lower31}
There exists $C_4>0$ satisfying the following: Let $\epsilon>0$, let $x\in\mathcal{C}$, $n_{m}+\ell_{n_m}\le r<r_1$, and $R\in [C2^{-{r+1}},C2^{-r})$ 
\[
\overline{\mu}(Q(x,R))\le C_4(\tfrac 12)^{r(2-\epsilon)-\ell_{n_{m}}-L_{m-1}}\lambda^{-r}. 
\]
\end{lemma}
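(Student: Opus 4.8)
The plan is to follow the same template as Lemmas \ref{lem:dynamrlessr_0}--\ref{lma:r0rnl}: decompose the mass of $Q=Q(x,R)$ through the symbolic structure of $\mu$, using Lemma \ref{agreement} to force agreement of the first $r$ digits, and then use Corollary \ref{cor:branching} to count how many branchings between the relevant return levels can still land inside $Q$. The new feature here is that $r$ has now grown past the return window $[n_m, n_m+\ell_{n_m}]$ but has not yet reached the next return level $r_1$ (equivalently, has not reached $n_{m+1}$); so between levels $n_m+\ell_{n_m}$ and $r$ the measure $\mu$ is once again uniform with the full free branching by $2$ at each step, but the target $Q$ restricts the $x$-coordinate.

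First I would fix $\bx\in\Sigma$ with $\pi(\bx)=x$, and note that $\pi(\bi)\in Q$ forces $\bi|_r=\bx|_r$ by Lemma \ref{agreement}. Between levels $n_m$ and $n_m+\ell_{n_m}$ the digits of any such $\bi$ must equal those of $\bz$ (otherwise $\mu([\bi|_{\cdot}])=0$), so these digits are already determined by $x\in\mathcal C$ and contribute nothing new. The mass of a cylinder at level $r$ of the form $[\bi|_r]$ with $\bi|_r=\bx|_r$ is, by definition of $\mu$ on the range $n_m+\ell_{n_m}\le r<n_{m+1}$, equal to $(\tfrac12)^{r-L_m}=(\tfrac12)^{r-\ell_{n_m}-L_{m-1}}$. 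There is exactly one such cylinder intersecting $Q$ (since $r$ is past $n_m$, $Q$ sits inside a single level-$r$ cylinder, up to constants, by the agreement forced above), but $Q$ does not fill it because $r<r_1\le r_{-1}(n_{m+1})$ means $(\tfrac12)^r$ is larger than $\lambda^{n_{m+1}}$; rather, the box $Q$ has $x$-side $\approx C(\tfrac12)^r$, which is much larger than the $x$-side $\lambda^r\gamma^r$ of a level-$r$ cylinder of the $f$-system. So the correct accounting is: the number of level-$r$ cylinders of the \emph{$f$-system} whose $\pi$-image meets $Q$ is, up to constants, $N\cdot(\text{vertical count})$ where the vertical direction contracts by $\tfrac12$ and is unconstrained by $Q$ past level $r$ — wait, here $r$ is the covering scale itself, so in fact I should count level-$r$ cylinders meeting $Q$ and each is covered by $O(1)$ boxes of side $2^{-r}$. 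The count of such cylinders is governed purely by the horizontal constraint: among the $2^{r-n_m-\ell_{n_m}}$ choices of digits $(i_{n_m+\ell_{n_m}+1},\dots,i_r)$, Corollary \ref{cor:branching} (applied at level $r$, with $x$-coordinate $x_1$ and radius $\rho\lambda^r$ for the rescaled picture, noting $(\tfrac12)^r$ vs $\lambda^r$) bounds the number whose $\pi_I$-image meets $B(x_1, C(\tfrac12)^r)$ by $C_1 2^{r}\lambda^{r(1-\epsilon)}/(\text{image size})$; carrying out the bookkeeping this yields a count of order $C_1\,(\tfrac12)^{-r(1-\epsilon)}$ of relevant completions beyond the first $n_m+\ell_{n_m}$ fixed digits (the factor $(\tfrac12)^{n_m+\ell_{n_m}}$ from the $f$-system cylinder size cancelling against the fixed digit count). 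Multiplying the per-cylinder mass $(\tfrac12)^{r-\ell_{n_m}-L_{m-1}}$ by the number of relevant cylinders gives
\[
\overline{\mu}(Q(x,R))\le C_4\,(\tfrac 12)^{r-\ell_{n_m}-L_{m-1}}\cdot(\tfrac12)^{r(1-\epsilon)}\lambda^{-r\cdot 0}\cdots
\]
and after collecting the $\lambda$-powers that arise from converting between $2^{-r}$-scale and $\lambda$-scale cylinders, one lands on $\overline{\mu}(Q)\le C_4(\tfrac12)^{r(2-\epsilon)-\ell_{n_m}-L_{m-1}}\lambda^{-r}$, which is the claim.

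The step I expect to be the main obstacle is getting the exponent bookkeeping exactly right: there are three competing scales in play — the covering scale $2^{-r}$, the horizontal cylinder scale $\lambda$ and the vertical cylinder scale $\tfrac12$ — and one must be careful about which level of $f$-cylinder one is counting, how Corollary \ref{cor:branching} is invoked (it is naturally phrased at scale $\lambda^k$, so one sets $k$ so that $\lambda^k\approx(\tfrac12)^r$, i.e. $k=k(r)$, and then relates the number of $\pi_I$-relevant words of length $k(r)$ to the number of full $f$-cylinders of the appropriate depth), and how the uniform mass $(\tfrac12)^{\cdot-L_{m-1}}$ interacts with all of this. The danger is an off-by-a-$\lambda^{-r}$ or a $2^{\epsilon r}$ error. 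Everything else — the agreement-forcing via Lemma \ref{agreement}, the vanishing of mass when digits disagree with $\bz$, and the final passage $m\to\infty$, $\epsilon\to0$ in an accompanying remark to extract the dimension value $t(\gamma)$ — is routine and parallels the previous three lemmas.
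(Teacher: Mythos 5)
Your skeleton is the paper's: force $\bi|_r=\bx|_r$ via Lemma \ref{agreement}, read off $\mu[\bx|_r]=(\tfrac12)^{r-L_m}=(\tfrac12)^{r-\ell_{n_m}-L_{m-1}}$, and then bound the fraction of that cylinder's mass lying in $Q$ by a branching count at depth $k(r)$ via Corollary \ref{cor:branching}. But the write-up stops exactly where the lemma's content is: your displayed bound trails off in ``$\lambda^{-r\cdot 0}\cdots$'' and you explicitly defer the exponent bookkeeping, which is the only nontrivial step and which you yourself flag as the likely failure point. To close it, argue as the paper does: any $\bi$ contributing to $\overline\mu(Q)$ satisfies $|\sum_{j\ge r}(i_j-x_j)\lambda^{j-r}|\le 2^{-r}\lambda^{-r}\lesssim\lambda^{k(r)-r}$, so Corollary \ref{cor:branching}, applied to words of length $k(r)-r$ about the shifted point $\sigma^r(\bx)$, gives at most $C_4\,2^{k(r)-r}\lambda^{(k(r)-r)(1-\epsilon)}$ admissible continuations; each such continuation carries conditional mass $2^{-(k(r)-r)}$ of $\mu[\bx|_r]$, because $\mu$ branches uniformly between $n_m+\ell_{n_m}$ and $n_{m+1}$ and $k(r)\le n_{m+1}$ (this uses $r<r_1$). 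The factors $2^{k(r)-r}$ and $2^{-(k(r)-r)}$ cancel, leaving $\lambda^{(k(r)-r)(1-\epsilon)}\le 2^{-r(1-\epsilon)}\lambda^{-r(1-\epsilon)}\le 2^{-r(1-\epsilon)}\lambda^{-r}$ since $\lambda^{k(r)}\le 2^{-r}$; multiplied by $(\tfrac12)^{r-\ell_{n_m}-L_{m-1}}$ this is exactly the claimed bound. Note the relevant cancellation is this one, not the one you describe between ``the factor $(\tfrac12)^{n_m+\ell_{n_m}}$ from the $f$-system cylinder size'' and ``the fixed digit count''.

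A smaller but genuine confusion: you assert that the $x$-side $C(\tfrac12)^r$ of $Q$ is ``much larger than the $x$-side $\lambda^r\gamma^r$ of a level-$r$ cylinder''. A level-$r$ cylinder of the $f$-system has $x$-extent $\lambda^r$ (the extra $\gamma^r$ belongs to the pre-image $E^{-n}(Q_n)$ of a target, not to a cylinder), and since $\lambda>\tfrac12$ we have $2^{-r}<\lambda^r$: the box is horizontally \emph{narrower} than the cylinder. That is precisely why $Q$ captures only the fraction of $\mu[\bx|_r]$ computed above; if the inequality went the way you state, no branching count would be needed. Your later steps implicitly assume the correct direction, so the slip does not propagate, but as written the justification for invoking Corollary \ref{cor:branching} is internally inconsistent.
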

\begin{proof}
Firstly, any $\bi\in \Sigma$ satisfying $\pi(\bi)\in Q(x, R)$ will have to have $\bi|_r=\bx|_r$ as above, by Lemma \ref{agreement}. Hence, we first set out to estimate the value of $\mu[\bx|_r]$. Here, 
\begin{equation}\label{eq:eq1}
\mu[\bx|_r]=(\tfrac 12)^{r-L_m}=(\tfrac 12)^{r-L_{m-1}-\ell_{n_m}}. 
\end{equation}
Notice that ultimately, any error from $L_{k-1}$ is going to be `small' relative to $r$, but depending on the position of $r$, $\ell_{n_m}$ could be `large'. That is the reason we have divided the power up in this way.

Only some out of the cylinders within $\pi[\bx|_{r}]$ will contribute to $Q(x, R)$, and we now estimate this proportion. Note that in order for some $\bi\in \Sigma$ to contribute to $\overline{\mu}(Q(x,R))$, the $x$-coordinate of $\pi(\bi)$ needs to be close enough to the $x$-coordinate of $\pi(\bx))$. In particular $[\bx|_{r}]$ has to satisfy
\[
\sum_{j=1}^\infty (\bi_j - \bx_j)\lambda^j\le C(\tfrac 12)^r\le (\tfrac 12)^r, 
\]
but from earlier we also know that $\bi|_r=\bx|_r$ so this becomes 
\[
\sum_{j=r}^\infty (\bi_j - \bx_j)\lambda^j\le (\tfrac 12)^r \Leftrightarrow \sum_{j=r}^\infty (\bi_j - \bx_j)\lambda^{j-r}\le (\tfrac 12)^r\lambda^{-r}. 
\]
Hence those words $\bi$ contributing to the measure $\overline{\mu}(Q(x,R))$ are those of the form $\bx|_r\bi$ with 
\[
\bi\in B:=\{\bi \in \Sigma \mid \sum_{j=1}^\infty (\bi_j - \sigma^r(\bx)_j)\lambda^{j}\le R\lambda^{-r}\}. 
\]
Since $C<1$, we have that $R\lambda^{-r}\leq \lambda^{k(r)-r}$. Hence, to estimate the measure Contributing we can use corollary \ref{cor:branching} to get that there is a constant $C_3>0$ with
$$\#\{(j_1,\ldots,j_{k(r)-r})\mid[j_1,\ldots,j_{k(r)-r}]\cap B\neq\emptyset\}\leq C_42^{k(r)-r}\lambda^{(k(r)-r)(1-\epsilon)}.$$
Combining this with \eqref{eq:eq1} gives a bound 
\begin{eqnarray*}
\overline{\mu}(Q((x,R))&\le& C_4(\tfrac12)^{r-\ell_{n_m}-L_{m-1}}\lambda^{(k(r)-r)(1-\epsilon)}2^{(k(r)-r)}2^{-k(r)+r}\\
&\leq&C_4(\tfrac 12)^{r-\ell_{n_m}-L_{m-1}}\lambda^{(k(r)-r)(1-\epsilon)}\\
&\leq&C_4(\tfrac 12)^{r-\ell_{n_m}-L_{m-1}}2^{-r(1-\epsilon)}\lambda^{-r}\\
&\leq&C_4(\tfrac 12)^{(2-\epsilon)r-\ell_{n_m}-L_{m-1}}\lambda^{-r}. 
\end{eqnarray*}

\end{proof}

\begin{remark}
Notice that, for the upper bound from Lemma \ref{lem:lower31} we get
\begin{eqnarray*}
\frac{\log \overline{\mu}(Q(x, R))}{\log R}&\ge& \frac{\log (\tfrac 12)^{r(2-\epsilon)-\ell_{n_{m}}-L_{{m-1}}}\lambda^{-r}}{C\log (\tfrac 12)^{r+1}}+\frac{\log C_4}{\log R}\\
&\geq& \frac{-r(2-\epsilon)(\log 2)}{C\log (\tfrac 12)^{r+1}}+\frac{\ell_{n_m}\log 2}{C\log (\tfrac 12)^{r+1}}+\frac{L_{m-1}\log 2}{C\log (\tfrac 12)^{r+1}}-\frac{r\log\lambda}{C\log (\tfrac 12)^{r+1}}+o(1).
\end{eqnarray*}
We can use that 
$$\frac{\ell_{n_{m}}}{r}\leq \frac{\ell_{n_m}}{n_m+\ell_{n_m}}\to\frac{\log \gamma}{\log(\gamma\lambda)}$$
to get that

$$\frac{\log \overline{\mu}(Q(x, R))}{\log R}\geq 2-\epsilon-\frac{\log \gamma}{\log(\gamma\lambda)}+\frac{\log\lambda}{\log(2)}+\mathit{o}(1)$$
and letting $\epsilon\to 0$  yields
$$\liminf_{m\to\infty}\frac{\log \overline{\mu}(Q(x, R))}{\log R}\geq t(\gamma).$$
\end{remark}

The lemmas and remarks throughout the section cover all possible values of $R$. Hence, we have now checked that for all $x\in \mathcal C$ 
$$\liminf_{R\to 0}\frac{\log \mu(Q(x, R)}{\log R}\geq t(\gamma)$$
and thus by part 1 of Lemma \ref{massdistribution} we have
\begin{equation*}
\dim R^*(z,\lambda) \ge t(\gamma). 
\end{equation*}
We can combine this with the upper bound from Section \ref{sec:upperbounds} for $\lambda\in (1/2\gamma,1)\cap\mathcal{E}$, and for those $z\in  F$ for which $\proj_1(z)$ has a unique expansion, to achieve part (2) of Theorem \ref{thm:main}.
\begin{remark}\label{rem:sharp}
As mentioned above the lower bound for this section holds for any choice of centre $z$. Hence, letting $z\in F$, $\lambda\in (1/2,1)\cap\mathcal{E}$, and letting $(\gamma_n)$ satisfy that $\gamma_n\in (0,1)$ and $\lim_{n\to\infty}\gamma_n^{\frac{1}{n}}=1$, we obtain for the set
$$R^*=\{x\in F\mid E^n(x)\in Q(z,\gamma_n)\text{ for infinitely many }n\}$$
that 
$$\dim R^*\geq\lim_{\gamma\to 1}\left(2+\frac{\log 2}{\log\lambda}-\frac{\log\gamma}{\log(\gamma\lambda)}\right)=2+\frac{\log\lambda}{\log 2}=\dim F.$$
Hence, 
$$\dim R^*=\dim F.$$ 
This demonstrates that sub-exponential speed of shrinking leads to sets $R^*$ with full dimensions. It also can be seen from part 1 of Theorem \ref{thm:main} that superexponential speed of shrinking targets leads to sets $R^*$ with zero dimension. 
\end{remark}

\section{Lower bound in Theorem \ref{thm:main} (3): Typical centre}\label{sec:case3}

In this section, we consider the third case of Theorem \ref{thm:main}. We will consider multiple parameters $\lambda$ simultaneously.  The dependence on $\lambda$ as a parameter will, for example, be relevant to the definition of $\ell_2$ and hence we will write $\ell_2(n, \lambda)$ to emphasise the dependence. We consider $\lambda>(2\gamma)^{-1}$. Just as in Section \ref{sec:uniquelower}, we need to define a Cantor subset $\mathcal{C}$ of $R^*$ and a corresponding mass distribution $\mu$ in order to estimate the dimension from below. The difference between parts (2) and (3) in Theorem \ref{thm:main} is that in part (2) the centre of the target $z$ had a unique $\lambda$ expansion, but in part (3) we are dealing with a centre $z$, generic for $\onf$ on $F$, and which will in general have multiple expansions. Here we need to use both exponential separation and transversality techniques, and consequently, our results hold for Lebesgue almost every $\lambda$ within the region of transversality, that is, for $\lambda\in(\tfrac 1{2\gamma}, \bar \lambda)$ (see Definition \ref{def:region}). In this case the centre of the shrinking target can affect the dimension. 

The strategy is still to consider a Cantor set where the returns are at sparse, predetermined times $(n_m)$, together with an appropriate well-distributed measure on this set. However, we cannot use the exact same choice of $\mathcal{C}$ and the measure $\mu$ as we did before. This is a consequence of the following: For points $\bi\in R_n$, the choices of digits between times $n_m$ and $n_m+\ell_1(n_m)$ are fixed: $\bi|_{n_m}^{n_m+\ell_1(n_m)}$ has to agree with $\underline z|_{\ell_1(n_m)}$. However, as explained in Remark \ref{rem:structure}, between $n_m+\ell_1(n_m)$ and $n_m+\ell_2(n_m, \lambda)$ there are potentially multiple possibilities for the digits in $\bi$, and indeed, both the value of $\ell_2(n_m, \lambda)$ and the exact distribution of branching in this range depend on $\lambda$. These appear as parameters in the definition of the measure below. 

\subsection{Preliminaries}

Let us make some choices. Fix $\gamma>0$, and $\delta>0$. Let $\tfrac 1{2\gamma}<\lambda_0<\lambda_1<\bar \lambda$. Recall that $\nu$ is the $(\tfrac 12,\tfrac 12)$-Bernoulli measure on $\Sigma$.

Consider a sequence $(n_m)$ to be fixed -- the conditions this sequence will satisfy are described underneath in Section \ref{sec:defC}. For a fixed $m\in\N$, we will use the notation as in the previous section but with a little adjustment to account for the fact that we need to work uniformly with $\lambda\in [\lambda_0,\lambda_1]$. We set:
\begin{enumerate}
\item
$r_{-1}=r_{-1}(n_m)$: the smallest integer $l$ such that $2^{-l}\leq \lambda_0^{n_m}$;
\item
$r_{0}=r_0(n_m)$: the smallest integer $l$ such that $2^{-l}\leq \lambda_0^{n_m}$;
\item
$r_{1}=r_{1}(n_m)$: the smallest integer $l$ such that $2^{-l}\leq \lambda_0^{n_{m+1}}$;
\item
$\ell_2(n_m,\lambda)$: the smallest integer $l$ such that $\lambda^{l}\leq \gamma^{n_m}$;
\item
$\ell_1(n_m)$: the smallest integer $l$ such that $2^{-l}\leq\gamma^{n_m}$;
\item
$\xi:=\frac{\log\gamma\log(\lambda_0 2)}{\log \lambda_0\log 2}$;
\item
$s:=\frac{2\log 2+\log\lambda_0}{\log(2/\gamma)}$;
\item
$\eta(\lambda_0,\lambda_1):=\left(\frac{\log \lambda_0}{\log \lambda_1}-1\right)+(s+1)\left(\frac{\log\gamma}{\log \lambda_1}-\frac{\log\gamma}{\log \lambda_0}\right)$.

\end{enumerate}
 It is helpful to keep in mind, and will be used repeatedly in the computations, that for large $m$, $\xi n_m$ is approximately $\ell_2(n_m, \lambda_0)-\ell_1(n_m)$.
 We begin with a lower bound for a typical centre of the number of cylinder sets which intersect the target, which is an immediate consequence of Lemma \ref{lem:msrballlower}.  
\begin{lemma}\label{lem:lowerboundbranching}
For all $\epsilon>0$ there exists $\Lambda=\Lambda_\epsilon\subset [\lambda_0,\lambda_1]$ such that
\begin{enumerate}
\item
$\mathcal{L}(\Lambda)>\lambda_1-\lambda_0-\epsilon/2$
\item
For $\nu$-almost all $\omega$ there exists $C>0$ and $N(\omega)\in\N$ such that for all $n\geq N$ and $\lambda\in\Lambda$,
$$\#\{(\tau_{n+1},\ldots\tau_{n+\lceil\xi n\rceil})\mid\left|\sum_{i=1}^{\lceil \xi n\rceil} (\tau_{n+i}\lambda^i-\omega_{n+i}\lambda^i)\right|\leq\lambda^{\xi n}\}\geq C(2\lambda^{(1+\epsilon)})^{\xi n}.$$ 
\end{enumerate}
\end{lemma}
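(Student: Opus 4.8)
The plan is to deduce the statement from Lemma~\ref{lem:msrballlower} by a direct translation between the two-sided ball-measure estimate there and the counting estimate claimed here. First I would fix $\epsilon>0$, apply Lemma~\ref{lem:msrballlower} with this $\epsilon$, with $\xi$ as defined in item (6) of the list of choices, and with the half-parameter $\epsilon/2$ in place of $\epsilon$ in the measure-of-$A$ bound; this produces a set $\Lambda=\Lambda_\epsilon\subset[\lambda_0,\lambda_1]$ with $\mathcal{L}(\Lambda)\ge\lambda_1-\lambda_0-\epsilon/2$, and, for $\nu$-a.e.\ $\omega$, an integer $N(\omega)$ such that for all $n\ge N(\omega)$ and all $\lambda\in\Lambda$,
\[
\bar\nu_\lambda\bigl(B(\pi(\sigma^n(\omega)),\lambda^{\xi n})\bigr)\ge\lambda^{\xi n(1+\epsilon)}.
\]
(Here I am using $\pi(\sigma^n(\omega))$ as shorthand for $\pi_I(\sigma^n(\omega))$, the first coordinate; since $\bar\nu_\lambda=(\pi_I)_*\nu$ this is the correct object.)

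Second, I would unpack what the ball-measure lower bound says about cylinders. By definition $\bar\nu_\lambda=(\pi_I)_*\nu$, so
\[
\bar\nu_\lambda\bigl(B(\pi_I(\sigma^n(\omega)),\lambda^{\xi n})\bigr)=\nu\bigl(\pi_I^{-1}B(\pi_I(\sigma^n(\omega)),\lambda^{\xi n})\bigr).
\]
Now I restrict attention to the coordinates indexed $n+1,\dots,n+\lceil\xi n\rceil$: since the cylinder $[\tau_{n+1},\dots,\tau_{n+\lceil\xi n\rceil}]$ (as a subset of the positions after $n$) has $\nu$-measure exactly $2^{-\lceil\xi n\rceil}$, and since $\nu$ is $\sigma$-invariant, the measure of the preimage ball is at most $2^{-\lceil\xi n\rceil}$ times the number of length-$\lceil\xi n\rceil$ words $\tau$ such that $\pi_I[\tau]$ (started at the appropriate scale) comes within distance comparable to $\lambda^{\xi n}$ of $\pi_I(\sigma^n(\omega))$. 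Concretely, if $\pi_I[\tau_{n+1}\cdots\tau_{n+\lceil\xi n\rceil}]$ meets the ball, then rescaling by $\lambda^{-\lceil\xi n\rceil}$ one finds $\bigl|\sum_{i=1}^{\lceil\xi n\rceil}(\tau_{n+i}-\omega_{n+i})\lambda^{i}\bigr|\lesssim\lambda^{\xi n}$, up to an absolute constant absorbed into $C$; conversely any word satisfying the displayed inequality contributes a full cylinder to a slightly enlarged ball, so up to changing the constant the count and the measure agree. Combining with the lower bound $\bar\nu_\lambda(B)\ge\lambda^{\xi n(1+\epsilon)}$ and dividing by $2^{-\lceil\xi n\rceil}$ gives
\[
\#\{(\tau_{n+1},\dots,\tau_{n+\lceil\xi n\rceil})\mid |\textstyle\sum_{i=1}^{\lceil\xi n\rceil}(\tau_{n+i}-\omega_{n+i})\lambda^{i}|\le\lambda^{\xi n}\}\ge C\,2^{\lceil\xi n\rceil}\lambda^{\xi n(1+\epsilon)}\ge C(2\lambda^{1+\epsilon})^{\xi n},
\]
which is the claim.

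Third, a small bookkeeping point: Lemma~\ref{lem:msrballlower} is stated for a \emph{fixed} $\omega$ and for all $n$ sufficiently large, with the set $A$ depending on $\omega$; but its proof actually produces, for each $\epsilon$, a single set $A$ of measure $\ge\lambda_1-\lambda_0-\epsilon$ working for $\nu$-a.e.\ $\omega$ (the set $A$ there is defined from the Fubini argument uniformly). So I would either cite that uniform form, or, if one prefers to be careful, re-run the short Fubini argument of Lemma~\ref{lem:msrballlower}: set $f(\lambda,\omega)=\sup\{n\mid \bar\nu_\lambda(B(\pi_I(\sigma^n\omega),\lambda^{\xi n}))<\lambda^{\xi n(1+\epsilon)}\}$, show $\int\!\!\int f<\infty$ using the measure bound $\bar\nu_\lambda(A_n)\le\lambda_1^{\xi n\epsilon}$ on the bad set, and extract $\Lambda$. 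The main (and essentially only) obstacle is the translation in the second step — making precise the comparison between \textquotedblleft the cylinder $\pi_I[\tau]$ meets the ball of radius $\lambda^{\xi n}$\textquotedblright\ and \textquotedblleft the exponential sum is $\le\lambda^{\xi n}$\textquotedblright, i.e.\ controlling the tail $\sum_{i>\lceil\xi n\rceil}(\tau-\omega)\lambda^i$ and the rescaling constants so that the loss is only in the multiplicative constant $C$ and not in the exponential rate; this is exactly the same computation that appears in Corollary~\ref{cor:branching} and Remark~\ref{rem:structure}, so it is routine here.
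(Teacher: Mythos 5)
Your proposal matches the paper exactly: the paper's entire proof of this lemma is the single sentence that it ``follows immediately by applying Lemma \ref{lem:msrballlower} with the choice of $\xi$'', and your cylinder-counting translation (measure of the ball $\le 2^{-\lceil\xi n\rceil}$ times the number of admissible words, hence the count is $\ge 2^{\lceil\xi n\rceil}\lambda^{\xi n(1+\epsilon)}$) is precisely the routine unpacking that sentence leaves implicit. The one point to tighten is that the tail and rescaling constants land inside the radius of the counting condition rather than as a multiplicative prefactor of the count, so strictly one should invoke Lemma \ref{lem:msrballlower} at radius $c\lambda^{\xi n}$ for a suitable small uniform $c>0$ (its proof goes through verbatim); this is the same imprecision the paper itself tolerates, and your closing paragraph already identifies it as the only issue.
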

\begin{proof}
This follows immediately by applying Lemma \ref{lem:msrballlower} with the choice of $\xi$.
\end{proof}

Over the course of the argument we will make use of the following lemmas which also are straightforward consequences of results from Section \ref{sec:prels}. 
\begin{lemma}\label{lem:unifpablo}
Let $0<\epsilon<\lambda_1-\lambda_0$. There exists $C\in\N$ such that 
$$\mathcal{L}\{\lambda\in [\lambda_0,\lambda_1]\mid\overline{\nu_{\lambda}}(I)\leq C|I|^{1-\epsilon}\text{  for all }I\subseteq [0,1]\}\geq \lambda_1-\lambda_0-\epsilon/2.$$
\end{lemma}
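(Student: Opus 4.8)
The plan is to deduce this from Shmerkin's theorem (Theorem~\ref{thm:pablo}) together with the fact recorded in Definition~\ref{def:expsep} that $J=\mathcal{E}^c$ has Hausdorff dimension $0$, hence Lebesgue measure zero. The content of the lemma is purely to upgrade the $\lambda$-dependent constant in Theorem~\ref{thm:pablo} to a single constant valid on a large-measure set of parameters, and this is a soft continuity-of-measure argument.

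First I would define, for each $C\in\N$,
\[
A_C=\{\lambda\in[\lambda_0,\lambda_1]\mid \overline{\nu_\lambda}(I)\le C|I|^{1-\epsilon}\text{ for every interval }I\subseteq[0,1]\}.
\]
These sets are nested, $A_C\subseteq A_{C+1}$. The proof then rests on two observations: (i) each $A_C$ is Lebesgue measurable, and (ii) $\bigcup_{C\in\N}A_C\supseteq\mathcal{E}\cap[\lambda_0,\lambda_1]$. Granting these, since $\mathcal{L}(J)=0$ we get $\mathcal{L}(\mathcal{E}\cap[\lambda_0,\lambda_1])=\lambda_1-\lambda_0$, so $\mathcal{L}\bigl(\bigcup_C A_C\bigr)=\lambda_1-\lambda_0$, and continuity of measure from below gives $\mathcal{L}(A_C)\to\lambda_1-\lambda_0$; choosing $C$ with $\mathcal{L}(A_C)\ge\lambda_1-\lambda_0-\epsilon/2$ finishes the proof. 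For (ii): if $\lambda\in\mathcal{E}$, Theorem~\ref{thm:pablo} applied with the given $\epsilon$ yields $C_1=C_1(\lambda,\epsilon)$ with $\overline{\nu_\lambda}(B(x,r))\le C_1 r^{1-\epsilon}$ for all $x,r$; since an arbitrary interval $I$ equals $B(c,|I|/2)$ for its midpoint $c$, we get $\overline{\nu_\lambda}(I)\le C_1(|I|/2)^{1-\epsilon}\le C_1|I|^{1-\epsilon}$, so $\lambda\in A_{\lceil C_1\rceil}$.

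For (i) I would reduce the defining condition to the countable family $\mathcal{J}$ of closed subintervals of $[0,1]$ with rational endpoints: every interval $I$ is contained in some $J\in\mathcal{J}$ with $|J|$ arbitrarily close to $|I|$, and $\overline{\nu_\lambda}$ is non-atomic for $\lambda\in(1/2,1)$, so $A_C=\bigcap_{J\in\mathcal{J}}\{\lambda:\overline{\nu_\lambda}(J)\le C|J|^{1-\epsilon}\}$. For fixed $J=[a,b]$ the map $\lambda\mapsto\overline{\nu_\lambda}(J)=\nu\{\bi:a\le\pi_I(\bi)\le b\}$ is measurable: the set $\{(\lambda,\bi)\in[\lambda_0,\lambda_1]\times\Sigma:a\le\pi_I(\bi)\le b\}$ is closed because $(\lambda,\bi)\mapsto\pi_I(\bi)=(1-\lambda)\sum_{k\ge1}i_k\lambda^{k-1}$ is jointly continuous, and measurability of the $\nu$-measures of its $\lambda$-sections follows from Fubini. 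Hence $A_C$ is a countable intersection of measurable sets.

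The argument has no genuine obstacle; the only mildly delicate step is the measurability of $A_C$, handled by the reduction to rational intervals and the joint continuity of the projection. All the analytic substance is imported wholesale from Theorem~\ref{thm:pablo} and the (cited) fact that $\dimH J=0$.
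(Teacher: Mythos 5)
Your proof is correct and follows essentially the same route as the paper: the paper's (two-line) proof likewise observes that Theorem \ref{thm:pablo} together with $\mathcal L(\mathcal E^c)=0$ forces the nested union $\bigcup_{C\in\N}A_C$ to have full measure in $[\lambda_0,\lambda_1]$, and then invokes continuity of measure from below. Your additional verification that each $A_C$ is measurable (reduction to rational intervals plus joint continuity of $(\lambda,\bi)\mapsto\pi_I(\bi)$) is a detail the paper leaves implicit.
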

\begin{proof}
We know from Theorem \ref{thm:pablo} that 
$$\mathcal{L}(\cup_{C\in\N}\{\lambda\in [\lambda_0,\lambda_1]\mid\overline{\nu_{\lambda}}(I)\leq C|I|^{1-\epsilon}\text{  for all }I\subseteq [0,1]\})=\lambda_1-\lambda_0.$$
As this is a nested union the result follows. 
\end{proof}

\begin{lemma}\label{lem:ambiguous}
Fix $z\in \Lambda$ so that Lemma \ref{lem:lowerboundbranching} holds for $\bz$. Let $\epsilon>0$, and let $\lambda\in \Lambda_\epsilon\subset [\lambda_0, \lambda_1]$. Let $n=n_m$ for some $m$, and denote $\ell_1=\ell_1(n), \ell_2=\ell_2(n, \lambda)$. Now, 
\[
\# D_{\ell_2-\ell_1}(\sigma^{\ell_1}(\bz), \lambda^{\ell_2-\ell_1})\ge (2\lambda)^{\xi n}\lambda^{\epsilon\xi n}. 
\]  
\end{lemma}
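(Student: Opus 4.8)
\textbf{Proof proposal for Lemma \ref{lem:ambiguous}.}

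The plan is to unwind the notation so that the quantity $\# D_{\ell_2-\ell_1}(\sigma^{\ell_1}(\bz), \lambda^{\ell_2-\ell_1})$ is literally the branching count appearing in Lemma \ref{lem:lowerboundbranching}, applied with $n$ replaced by $\ell_1(n_m)$ and the truncation length $\lceil \xi \ell_1\rceil$ replaced by $\ell_2-\ell_1$. Concretely, $D_{\ell_2-\ell_1}(\sigma^{\ell_1}(\bz), \lambda^{\ell_2-\ell_1})$ should be the set of words $\tau$ of length $\ell_2-\ell_1$ such that the cylinder $[\tau]$ projects under $\pi_I$ to within $\lambda^{\ell_2-\ell_1}$ of the corresponding truncated $\bar\pi$-image of $\sigma^{\ell_1}(\bz)$, i.e.\ exactly the set counted by $\#\{(\tau_1,\dots,\tau_{\ell_2-\ell_1}) : |\sum_i(\tau_i - z_{\ell_1+i})\lambda^i|\le \lambda^{\ell_2-\ell_1}\}$ up to an additive tail of size $O(\lambda^{\ell_2-\ell_1})$ coming from the digits past position $\ell_2-\ell_1$, which only changes the radius by a bounded factor and hence the count by a bounded factor (absorbed into the implicit constant or the $\lambda^{\epsilon\xi n}$ slack). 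First I would therefore record this identification as the main reduction step.

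Next I would invoke Lemma \ref{lem:lowerboundbranching}: since $\bz \in \Lambda$ is chosen so that the conclusion of that lemma holds for $\bz$ (for the given $\epsilon$, after possibly shrinking $\epsilon$ by a constant factor), there exist $C>0$ and $N(\bz)$ so that for all $k\ge N$,
\[
\#\Big\{(\tau_{1},\dots,\tau_{\lceil \xi k\rceil}) : \Big|\sum_{i=1}^{\lceil \xi k\rceil}(\tau_{i}-z_{k+i})\lambda^i\Big|\le \lambda^{\xi k}\Big\}\ge C\,(2\lambda)^{\xi k}\lambda^{\epsilon \xi k}.
\]
The point is now to compare $\lceil \xi \ell_1\rceil$ with $\ell_2-\ell_1$. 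Here one uses the remark recorded just before Lemma \ref{lem:lowerboundbranching}: for large $m$, $\xi n_m$ is approximately $\ell_2(n_m,\lambda_0)-\ell_1(n_m)$, and more precisely, since $\ell_1(n)/n \to \log\gamma/(-\log 2)$ and $\ell_2(n,\lambda)/n\to \log\gamma/\log\lambda$, we get $\xi \ell_1(n)/n \to \big(\tfrac{\log\gamma\log(2\lambda_0)}{\log\lambda_0\log 2}\big)\cdot\tfrac{\log\gamma}{-\log 2}$, which matches $(\ell_2-\ell_1)/n$ in the limit $\lambda\to\lambda_0$; so up to $o(n)$ errors (and a monotonicity argument in $\lambda$, since $\ell_2(n,\lambda)$ is decreasing in $\lambda$ and $\lambda\ge\lambda_0$), $\ell_2-\ell_1 \le \lceil \xi \ell_1\rceil + o(\ell_1)$. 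A count over a longer window only increases (restricting the extra digits to $0$ gives an injection), and a count over a window shorter by $o(\ell_1)$ costs at most a factor $(2\lambda)^{o(\ell_1)}$; either way the discrepancy is subexponential in $n$ and is absorbed by decreasing $\epsilon$ slightly, so the bound $(2\lambda)^{\xi n}\lambda^{\epsilon\xi n}$ survives for all $m$ large. I would then note $n=n_m$ is assumed large (this is built into the ``for $n$ sufficiently large'' in Lemma \ref{lem:lowerboundbranching} and into how $(n_m)$ is chosen in Section \ref{sec:defC}), so the estimate holds as stated.

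The main obstacle is bookkeeping rather than conceptual: reconciling the three slightly different "lengths" in play — $\lceil \xi n\rceil$ from Lemma \ref{lem:lowerboundbranching}, $\ell_2(n,\lambda)-\ell_1(n)$ which is the length actually relevant to the $D$-notation, and the fact that $\ell_2$ depends on $\lambda$ while $\xi$ is defined via $\lambda_0$ — and checking that all the resulting errors are subexponential in $n$ so they can be swept into the $\lambda^{\epsilon\xi n}$ factor (after a harmless replacement $\epsilon \rightsquigarrow c\epsilon$). One must also make sure the radius in the definition of $D$ matches the radius $\lambda^{\xi k}$ in Lemma \ref{lem:lowerboundbranching} up to bounded multiplicative constants, which is where the tail-of-the-expansion estimate and the constant $C$ from Lemma \ref{agreement}-type comparisons enter; none of this is deep, but it needs to be done carefully to justify that the clean exponential lower bound is genuinely uniform over $\lambda\in\Lambda_\epsilon$.
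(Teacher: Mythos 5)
Your overall strategy is the paper's: identify $\# D_{\ell_2-\ell_1}(\sigma^{\ell_1}(\bz), \lambda^{\ell_2-\ell_1})$ with the branching count of Lemma \ref{lem:lowerboundbranching} and then reconcile window lengths. But there is a genuine error in the reconciliation. You apply Lemma \ref{lem:lowerboundbranching} ``with $n$ replaced by $\ell_1(n_m)$'', which yields a window of length $\lceil \xi\,\ell_1(n_m)\rceil$ and a lower bound of order $(2\lambda^{1+\epsilon})^{\xi \ell_1(n_m)}$, and you then assert that $\xi\ell_1(n)$ and $\ell_2-\ell_1$ agree up to $o(n)$. They do not: $(\ell_2(n,\lambda_0)-\ell_1(n))/n \to \tfrac{\log\gamma}{\log\lambda_0}+\tfrac{\log\gamma}{\log 2}=\xi$, whereas $\xi\ell_1(n)/n \to \xi\cdot\tfrac{\log\gamma}{-\log 2}$. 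These coincide only when $\gamma=\tfrac12$; for general $\gamma$ the two exponents differ by the constant factor $\tfrac{\log\gamma}{-\log 2}$, i.e.\ by a quantity \emph{linear} in $n$, which is an exponential (not subexponential) discrepancy in the count and cannot be absorbed into the $\lambda^{\epsilon\xi n}$ slack. Your argument would therefore produce the bound $(2\lambda^{1+\epsilon})^{\xi\ell_1(n_m)}$ rather than the required $(2\lambda)^{\xi n_m}\lambda^{\epsilon\xi n_m}$.

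The paper avoids this by decoupling the starting digit from the scale parameter: it keeps the window length $\lceil\xi n_m\rceil$ and radius $\lambda^{\xi n_m}$ pegged to $n_m$ itself (using that $\lceil\xi n_m\rceil\ge \ell_2(n_m,\lambda)-\ell_1(n_m)$ and $\lambda^{\xi n_m}\le\lambda^{\ell_2-\ell_1}$ since $\lambda\ge\lambda_0$), and only shifts the starting position of the digits of $\bz$ to $\ell_1(n_m)$; this is exactly what makes the exponent $\xi n_m$ come out. A secondary point: your injection argument (``restricting the extra digits to $0$'') transfers a lower bound from a \emph{shorter} window to a \emph{longer} one, whereas here you need to pass from the count over the longer window $\lceil\xi n\rceil$ with the tighter radius to the count over the shorter window $\ell_2-\ell_1$ with the looser radius; the natural map is truncation, which is many-to-one, so this direction also needs an argument (controlling the tail $\sum_{i>\ell_2-\ell_1}\lambda^i$ against the enlarged radius) rather than the one you gave.
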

 \begin{proof}
 By choice of $z\in \Lambda$, Lemma \ref{lem:lowerboundbranching} holds for $\bz\in \Sigma$ with $N(\bz)<\ell_1(n_1)$. Now, by definition of $\ell_2, \ell_1$, since $\lambda>\lambda_0$, $\xi n\ge \ell_2(\lambda, n)-\ell_1(n)$ and
\begin{align*}
\# D_{\ell_2-\ell_1}&(\sigma^{\ell_1}(\bz), \lambda^{\ell_2-\ell_1})\\
 &=\#\{|\bj|=\ell_2-\ell_1\mid |\sum_{i=1}^{\ell_2-\ell_1}(j_i - z_{i+\ell_1})\lambda^i|<\lambda^{\ell_2-\ell_1}\}\\
 &\ge \#\{|\bj|=\lceil\xi n\rceil\mid |\sum_{i=1}^{\lceil n\xi\rceil}(j_i - z_{i+\ell_1})\lambda^i|<\lambda^{\xi n}\}\\
 &\ge C(2\lambda)^{n\xi}\lambda^{\xi \epsilon n}.  
\end{align*} 
 \end{proof}
 
 \subsection{Definition of the Cantor set and the measure on it}\label{sec:defC}

\subsubsection*{Definition of the Cantor set $\mathcal{C}$} Fix $z\in \Lambda$ so that Lemma \ref{lem:lowerboundbranching} above holds for the $\bz\in \Sigma$ such that $\pi(\bz)=z$. Also fix $\lambda$ for the time being. Denote by $N=N(\bz)$ the number $N$ from Lemma \ref{lem:lowerboundbranching}. Let $Q_n=Q(z, \gamma^n)$. Recall that 
\[
R_n^*=\{x\in \Lambda\mid E^n(x)\in Q_n\}. 
\]
Let $n_1$ be so large that $\ell_1(n_1)>N(z)$. In what follows, we will determine a fast increasing subsequence $(n_m)$, satisfying, to begin with, $n_m+\ell_2(n_m, \lambda_1)<n_{m+1}$ (see underneath). Set 
\[
\mathcal{C}=\bigcap_{m=1}^\infty R_{n_m}^*\subset R^*. 
\]

\subsubsection*{Definition of a measure on $\mathcal{C}$}
For $m\in\N$ denote 
\[
D_m(\underline x, \rho)=\{\bi \in \Sigma_m\mid \pi[\bi]\cap B(\bx, \rho)\neq \emptyset\}. 
\]
As outlined in Remark \ref{rem:structure}, it will be of particular interest, when is $\pi[\bi|_{n_m}^{n_m+\ell_2(n_{m}, \lambda)}]\cap Q_n\neq \emptyset$.

For fixed $z$ and $\lambda$ as above, define $\mu_\lambda$ on $\pi^{-1}(\mathcal{C})$ by setting for $\bx\in \Sigma^*$ with  $n_{m-1}+\ell_2(n_{m-1}, \lambda)\le |\bx|\le n_{m}$, 
\[
\mu_\lambda[\bx]=\mu_\lambda[\bx|_{n_{m-1}+\ell_2(n_{m-1}, \lambda)}](\tfrac 12)^{|\bx|-n_{m-1}+\ell_2(n_{m-1}, \lambda)}. 
\]
That is, between target regions we distribute the mass evenly between the two digits. For $n_m\le |\bx|\le n_m+\ell_1(n_m)$, set 
\[
\mu_\lambda[\bx]=\mu_\lambda[\bx|_{n_m}], 
\]
that is, concentrate the measure on the unique choice of digits in this region. Then for $|\bx|= n_m+\ell_2(n_{m}, \lambda)$ such that 
\[
\bx|_{n_m}^{n_m+\ell_2(n_m, \lambda)} \in D_{\ell_2(n_{m}, \lambda)-\ell_1(n_m)}(\sigma^{\ell_1(n_m)}(\bz), \lambda^{\ell_2(n_{m}, \lambda)-\ell_1(n_m)}),
\] 
set (inductively)
\begin{equation}\label{eq:ambiguous}
\mu_\lambda[\bx]= \frac{\mu_\lambda[\bx|_{n_m}]}{\#D_{\ell_2(n_{m}, \lambda)-\ell_1(n_m)}(\sigma^{\ell_1(n_m)}(\bz), \lambda^{\ell_2(n_{m}, \lambda)-\ell_1(n_m)})}. 
\end{equation}
That is, we subdivide the mass evenly between any branches in this \emph{ambiguous} region. This definition also implicitly determines the weights of cylinders with lengths $n_m+\ell_1(n_m)\le |\bx|\le n_m+\ell_2(n_{m}, \lambda)$ and extends to a measure on $\Sigma$ by Carath\'eodory's extension theorem. The measure is supported on $\pi^{-1}(\mathcal{C})$ by compactness of $\mathcal{C}$. We can also estimate that for any $\lambda\in [\lambda_0,\lambda_1]$, by Lemma \ref{lem:ambiguous}
$$\#D_{\ell_2(n_{m}, \lambda)-\ell_1(n_m)}(\sigma^{\ell_1(n_m)}(\bz), \lambda^{\ell_2(n_{m}, \lambda)-\ell_1(n_m)})\geq (2\lambda)^{\xi n}\lambda^{\epsilon\xi n}$$
and so for $|\underline{x}|=n_m+\ell_2(n_{m}, \lambda)$ we have
\begin{equation}\label{eq:measestimate}
\mu_\lambda[\bx]\leq \mu_\lambda[\bq|_{n_m}]\lambda^{-\epsilon\xi n}(2\lambda)^{-\xi n}.
\end{equation}
Finally, let $\bar \mu_\lambda=\pi_*\mu_\lambda$, supported on $\mathcal{C}$.

\subsubsection*{Conditions on $(n_m)$}

Here we collect the choices from earlier, and set all further conditions on the sequence $(n_m)$. Choose $n_1$ to be so large that $\ell_1(n_1)>N(z)$. Assume that for all $m\in\N$, $n_m>n_{m-1}+\ell_2(n_{m-1}, \lambda)$. Further, always let $n_m$ be so large that for any $\bq\in \pi^{-1}(\mathcal{C})$ with $|\bq|\leq r_{-1}$, the impact of levels before $n_{m-1}+\ell_2(n_{m-1}, \lambda)$ is negligible: In particular we can choose for any fixed $\epsilon>0$, for $m$ sufficiently large and  $r_{-1}(n_m)\le |\bx|\le n_m$, that
\begin{equation}\label{eq:choice}
 \mu_\lambda[\bx]\le (\tfrac 12)^{|\bx|}2^{\epsilon r_{-1}}.
\end{equation}
We also assume that $r_1(n_m)>\ell_2(n_{m}, \lambda_0)$, where $\ell_2(n_m, \lambda)$ is the smallest integer satisfying
\[
\lambda_0^{\ell_2(n_{m}, \lambda_0)}\le\gamma^{n_{m}}. 
\]
Recall that $r_1(n_m)$ is the smallest integer satisfying $(\tfrac 12)^{r_1(n_m)}\le \lambda_1^{n_m}$.

\subsection{Upper bounds to measures of balls, and the proof of Theorem \ref{thm:main} (3)} \label{section73}

The aim of this subsection is to prove Theorem \ref{thm:main} (3). In the proofs of Theorem \ref{thm:main} (1) and (2) we have made use of the local dimension result given in part 1 of Lemma \ref{massdistribution}. An alternative route to dimension lower bounds is the potential theoretic method given in part 2 of Lemma \ref{massdistribution}. In this section we will make use of this second method.

Assume $\lambda_0, \lambda_1$ fixed. We cannot prove directly that the $t$-energy of the measure $\bar\mu_\lambda$ would be bounded for $\mathcal L$-almost all $\lambda$. The majority of this section is dedicated to the proof of a preliminary result, that for some sets $\Lambda_\epsilon\subset [\lambda_0, \lambda_1]$ with  $\mathcal L (\Lambda_\epsilon)>\lambda_1-\lambda_0-\epsilon$, for $\mathcal L$-almost every $\lambda\in \Lambda_\epsilon$, the $t$-energy is finite. We then have by part 2 of Lemma \ref{massdistribution} that by taking $\epsilon\to 0$ along a countable sequence we obtain 
$$\mathcal{L}(\{\lambda\in [\lambda_0,\lambda_1]\mid\dim \mathcal{C}< t\})=0.$$
This is the content of Lemma \ref{lowerboundmain} underneath. 
It is explained at the very end of the section, how the lower bound in case (3) of Theorem \ref{thm:main} follows from Lemma \ref{lowerboundmain}.

To be more precise, we now aim to prove that for some $t$ close to the dimension value from case (3) of Theorem \ref{thm:main}, 
\begin{equation}\label{eq:Lambda}
\int_{\Lambda_\epsilon}\iint|\pi(\bi) - \pi(\bj)|^{-t}\d\mu_{\lambda}(\bi) \d\mu_{\lambda}(\bj) \d\mathcal L<\infty. 
\end{equation}
This implies that $\iint|\pi(\bi) - \pi(\bj)|^{-t} \d\mu_{\lambda}(\bi) \d\mu_{\lambda}(\bj)<\infty $ for $\mathcal L$-a.e. $\lambda \in \Lambda_\epsilon$. The choice of $\Lambda_\epsilon$ is as follows: Let $\delta>0$. Let $\epsilon>0$ and denote by $\Lambda_\epsilon\subset [\lambda_0, \lambda_1]$ with $\mathcal L (\Lambda_\epsilon)>\lambda_1-\lambda_0-\epsilon$ such that statements of Lemma \ref{lem:lowerboundbranching}, and Lemma \ref{lem:unifpablo} simultaneously hold for all $\lambda\in \Lambda_\epsilon$ (note that $\delta>0$ plays a role in these lemmas).

We will now go through a series of manipulations of the problem to get to a point where we can begin handling the local behaviour of $\bar \mu_\lambda$ for different scales $(\tfrac 12)^r$ in lemmas underneath.

We will be using $C$ from Lemma \ref{agreement}. Note that since $\frac{1}{2\gamma}>\frac{1}{2}$ we will have $\lambda_0>\frac{1}{2}$ and so this $C$ can be taken uniformly across the interval $[\lambda_0,\lambda_1]$. Let 
$$A_r=\{(\lambda,\bi, \bj)\in \Lambda_\epsilon\times \mathcal{C}\times \mathcal{C}\mid |\pi(\bi) - \pi(\bj)|\in [C(\tfrac 12)^r,C(\tfrac 12)^{r+1})\},$$
which we will use to split the integral from \eqref{eq:Lambda} into level sets. This gives the following upper bound: 
\begin{equation}\label{eq:energystart}
\begin{aligned}
\int_{\Lambda_\epsilon}\iint|\pi(\bi) - \pi(\bj)|^{-t} \d\mu_{\lambda}(\bi) \d\mu_{\lambda}(\bj) \d\mathcal L\\
&\le C^{-t}\sum_{r=1}^\infty 2^{rt}\iiint \chi_{A_r}  \d\mu(\bi) \d\mu(\bj) \d\mathcal L.
\end{aligned}
\end{equation}
Denote 
 \[
M(r):=\iiint \chi_{A_r}  \d\mu_{\lambda}(\bi) \d\mu_{\lambda}
(\bj)\, d\mathcal L.
 \]
 In order to achieve the estimate \eqref{eq:Lambda}, we need to bound this quantity from above for all sufficiently large values of $r$. It should be noted that at this point is not possible to use Fubini's Theorem as the measure $\mu_{\lambda}$ depends on $\lambda$.  
 
 We are now in a position to start studying different values of $r$, with the aim of obtaining good enough bounds to $M(r)$. Ultimately, whether the bounds below are `good enough' is a matter of how large can we make $t$ and still obtain \eqref{eq:Lambda}. It is justified at the end of the section in the proofs of Lemma \ref{lowerboundmain} and the proof of Theorem \ref{thm:main}  as its consequence that $t$ can be taken to be close to the dimension value from case (3) of Theorem \ref{thm:main}. 
 
Let $\epsilon>0$ be fixed throughout the following series of lemmas.  Let $r\in \N$ and fix $m\in \N$ to be the value for which $r_{-1}=r_{-1}(n_m)\leq r \leq r_1=r_1(n_m)$.   Note that since we assume $\lambda>\tfrac 1{2\gamma}$, by definitions of $r_0$ and $\ell_2(n_{m})$, we have $r_0<n_{m}$. Our first region of values considered is:
\begin{lemma}\label{lem:mainenergy}
Assume that $r_{-1}\leq r<r_0<n_m$ is sufficiently large. We have 
$$M(r)\lesssim 2^{r\epsilon}(2^{-n_m}+2^{-2r}\lambda_0^{-r}).$$
\end{lemma}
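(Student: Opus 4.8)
Recall that
\[
M(r)=\iiint\chi_{A_r}\,\d\mu_{\lambda}(\bi)\,\d\mu_{\lambda}(\bj)\,\d\mathcal L,
\]
and that, by Lemma~\ref{agreement}, every triple $(\lambda,\bi,\bj)\in A_r$ has $q:=|\bi\wedge\bj|\ge r$ because $|\pi_\lambda(\bi)-\pi_\lambda(\bj)|<C2^{-r}$. The plan is to split $M(r)$ according to whether $q\ge n_m$ or $r\le q<n_m$: the first range will produce the term $2^{-n_m}$ by a crude estimate, the second the term $2^{-2r}\lambda_0^{-r}$ by transversality. Throughout we use \eqref{eq:choice}, which gives $\mu_\lambda[\underline x]\le(\tfrac12)^{|\underline x|}2^{\epsilon r_{-1}}$ uniformly in $\lambda$ whenever $r_{-1}\le|\underline x|\le n_m$; since $r\ge r_{-1}$ the factor $2^{\epsilon r_{-1}}\le 2^{\epsilon r}$ is harmlessly absorbed into the prefactor $2^{r\epsilon}$ (renaming $\epsilon$ at the end).

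\emph{Deep agreement, $q\ge n_m$.} The set $\{(\bi,\bj):|\bi\wedge\bj|\ge n_m\}$ has $\mu_\lambda\times\mu_\lambda$–measure at most $\sum_{\underline w\in\Sigma_{n_m}}\mu_\lambda[\underline w]^2\le(\tfrac12)^{n_m}2^{\epsilon r_{-1}}$ by \eqref{eq:choice}. Discarding the distance constraint and integrating over $\lambda\in\Lambda_\epsilon$ bounds this part of $M(r)$ by $\lesssim 2^{-n_m}2^{\epsilon r}$.

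\emph{Shallow agreement, $r\le q<n_m$.} Fix such a $q$. If $(\lambda,\bi,\bj)\in A_r$ with $|\bi\wedge\bj|=q$, then $\bi$ and $\bj$ share the prefix $\bi|_q$ and disagree at symbol $q+1$, so $\sigma^q\bi$ and $\sigma^q\bj$ differ in their first symbol. Since $\pi$ contracts the first coordinate by $\lambda^q$ across the cylinder $[\bi|_q]$, the bound $|\pi_\lambda(\bi)-\pi_\lambda(\bj)|<C2^{-r}$ forces $|\pi_I(\sigma^q\bi)-\pi_I(\sigma^q\bj)|<C2^{-r}\lambda^{-q}$; and $q\ge r\ge r_{-1}$ gives $2^{-r}\le\lambda_0^{\,n_m}\le\lambda^{n_m}$, so this scale is at most $\lambda^{n_m-q}$ and in particular at most $C2^{-r}\lambda_0^{-q}$. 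For each fixed pair of codings first disagreeing at symbol $1$, Theorem~\ref{thm:transversality} gives
\[
\mathcal L\bigl\{\lambda\in[\lambda_0,\lambda_1]:\ |\pi_I(\sigma^q\bi)-\pi_I(\sigma^q\bj)|<C2^{-r}\lambda_0^{-q}\bigr\}\lesssim 2^{-r}\lambda_0^{-q}.
\]
Combining this with $\sum_{\underline w\in\Sigma_q}\mu_\lambda[\underline w]^2\le(\tfrac12)^q2^{\epsilon r_{-1}}$, the contribution of level $q$ to $M(r)$ is $\lesssim 2^{-q}2^{\epsilon r}\cdot 2^{-r}\lambda_0^{-q}$. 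Summing over $q\ge r$ and using that $2\lambda_0>1$ makes $\sum_{q\ge r}(2\lambda_0)^{-q}$ a geometric series dominated by its $q=r$ term, the shallow part is $\lesssim 2^{\epsilon r}\,2^{-r}(2\lambda_0)^{-r}=2^{\epsilon r}\,2^{-2r}\lambda_0^{-r}$. Adding the two parts gives $M(r)\lesssim 2^{r\epsilon}\bigl(2^{-n_m}+2^{-2r}\lambda_0^{-r}\bigr)$.

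The step I expect to be the main obstacle is justifying the interchange of integrals in the shallow range: because $\mu_\lambda$ genuinely depends on $\lambda$ (through $\ell_2(n_{m'},\lambda)$ and the branching sets $D$ of the earlier ambiguous regions), one cannot apply Fubini directly to move $\int_{\Lambda_\epsilon}\d\mathcal L$ past the double $\mu_\lambda$–integral before invoking Theorem~\ref{thm:transversality}. This is precisely why the estimate is restricted to $r_{-1}\le r<r_0<n_m$: in this regime the part of $\mu_\lambda$ that a ball of radius $2^{-r}$ can resolve agrees, up to the factor $2^{\epsilon r_{-1}}$ furnished by \eqref{eq:choice}, with a $\lambda$-independent measure — the Bernoulli measure together with the forced digits prescribed by $\bz$ — while on the sparser parts where an ambiguous region does intrude, Theorem~\ref{thm:transversality} still applies, being insensitive to the measure and depending only on the two cylinders first disagreeing at a given level. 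Turning this heuristic into a rigorous interchange, and keeping all implied constants uniform in $\lambda\in[\lambda_0,\lambda_1]$, is the technical core of the argument.
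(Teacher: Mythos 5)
Your decomposition is the same as the paper's: split by the agreement level $w=|\bi\wedge\bj|\ge r$ (forced by Lemma \ref{agreement}), bound the deep range $w\ge n_m$ crudely by the square of the cylinder measures to get $2^{-n_m}2^{\epsilon r}$, and treat each shallow level $r\le w<n_m$ by transversality at scale $2^{-r}\lambda_0^{-w}$, summing the resulting geometric series in $(2\lambda_0)^{-w}$. The arithmetic in both ranges matches the paper's exactly.

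However, the step you flag at the end as ``the technical core'' is a genuine gap, and as written your shallow-range estimate does not go through: you apply Theorem \ref{thm:transversality} to the full infinite tails $\sigma^q\bi,\sigma^q\bj$ and then try to multiply the resulting $\mathcal L$-bound by $\sum_{\underline w\in\Sigma_q}\mu_\lambda[\underline w]^2$, but the tails run through the ambiguous regions where $\mu_\lambda$ genuinely depends on $\lambda$, so the order of integration cannot be exchanged there. The paper closes this by first \emph{truncating the condition to the first $n_m$ digits}: since $r<r_0$ one has $k(r)<n_m+\ell_2(n_m,\lambda)$, so the tail of $\sum_\ell(i_\ell-j_\ell)\lambda^\ell$ beyond $n_m+\ell_2(n_m,\lambda)$ is negligible at scale $2^{-r}$, and since both $\bi,\bj\in\mathcal C$ hit the target at time $n_m$, the block from $n_m$ to $n_m+\ell_2(n_m,\lambda)$ contributes at most a constant times $\lambda^{n_m+\ell_2(n_m,\lambda)}<(\tfrac12)^r$. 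The membership condition therefore reduces to $|\sum_{\ell=w+1}^{n_m}(i_\ell-j_\ell)\lambda^\ell|\lesssim 2^{-r}$, which depends only on the first $n_m$ symbols; on cylinders of length at most $n_m$ the bound \eqref{eq:choice} makes $\mu_\lambda$ comparable to the $\lambda$-independent weight $2^{-n_m}$ up to the factor $2^{\epsilon r}$, so one may replace the double $\mu_\lambda$-integral by a sum over the finitely many pairs in $P(w,n_m)$ and only then integrate in $\lambda$, applying transversality separately to each fixed pair of finite words. This truncation-plus-discretisation is exactly the missing piece you would need to supply; once it is in place your estimates yield the stated bound.
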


\begin{proof} 

Notice that by Lemma \ref{agreement} $|\bi\wedge \bj|\ge r$ whenever  $|\pi(\bi) - \pi(\bj)| \leq C(\tfrac 12)^r$. The rest of the argument does not depend on the $y$-coordinate. We also are bounding the integrals from above and so we can now remove the lower bounds in the level sets of the integral, and set
\[
A'(r):=\{(\lambda,\bi, \bj)\in \Lambda_\epsilon\times \mathcal{C}\times \mathcal{C}\mid |\sum_{\ell=r}^\infty (i_\ell - j_\ell)\lambda^\ell|<(\tfrac 12)^r\} 
\]
and the integral
 \begin{equation}\label{eq:Mdashes}
M'(r):=\iiint \chi_{A'(r)}  \d\mu_{\lambda}(\bi) \d\mu_{\lambda}(\bj) \d\mathcal L.
 \end{equation}

Recall that $k(r)$ is the number which satisfies $\lambda ^{k(r)}=(\tfrac 12)^r$. Notice that since $r<r_0$, $k(r)< n_m+\ell_2(n_m,\lambda_0)$. Hence, up to an absolute constant, the tail of the series beyond $n_m+\ell_2(n_m, \lambda)$ makes no difference to the estimate. Furthermore, since both $\bi$ and $\bj\in \mathcal{C}$, they both hit the target at time $n_m$ and, again up to absolute constants, 
\[
\left|\sum_{\ell=n_m}^{n_m+\ell_2(n_m,\lambda)} (i_\ell - j_\ell)\lambda^\ell\right|\lesssim\lambda^{n_m+\ell_2(n_m, \lambda)}< (\tfrac 12)^r. 
\]
Hence the series degenerates: that is 
\begin{equation}\label{eq:series}
|\sum_{\ell=r}^\infty (i_\ell - j_\ell)\lambda^\ell|<(\tfrac 12)^r
\end{equation}
requires that 
\begin{equation}\label{eq:sumest}
|\sum_{\ell=r}^{n_m} (i_\ell - j_\ell)\lambda^\ell|\lesssim(\tfrac 12)^r. 
\end{equation}
It will be helpful to write for $w<n_m$
$$A'(r,w)=A'(r)\cap\{(\bi,\bj,\lambda)\mid|\bi\wedge \bj|=w\}$$
and
$$A'(r,n_m)=A'(r)\cap\{(\bi,\bj,\lambda)\mid|\bi\wedge \bj|\geq n_m\}.$$
We let 
\begin{equation}\label{eq:Mdashesw}
M'(r,w)=\iiint \chi_{A'(r,w)}  \d\mu_{\lambda}(\bi) \d\mu_{\lambda}(\bj) \d\mathcal L.
\end{equation}

Note that 
\[
M'(r)=\sum_{w=r}^{n_m-1} M'(r, w)+M'(r, n_m). 
\]
We start by estimating $M'(r,n_m)$. In this case we automatically have that for any $\lambda\in \Lambda_{\epsilon}$
\[
|\sum_{\ell=r}^\infty (i_\ell - j_\ell)\lambda^\ell|\lesssim (\tfrac 12)^r. 
\]
We then have, by the definition of $\mu$ and \eqref{eq:choice} 
\begin{eqnarray*}
M'(r,n_m)&\lesssim &\mu\times\mu(\{(\bi,\bj)\mid|\bi\wedge \bj|\geq n_m\}\\
&\lesssim &2^{-n_m}2^{r_{-1}\epsilon}\leq 2^{-n_m}2^{r\epsilon}.
\end{eqnarray*} 

We now turn our attention to the estimation $M'(r,w)$ when $w<n_m$. In this case, by \eqref{eq:sumest} the condition 
\[
|\sum_{\ell=r}^\infty (i_\ell - j_\ell)\lambda^\ell|<(\tfrac 12)^r
\]
reduces to 
\[
|\sum_{\ell=w+1}^{n_m} (i_\ell - j_\ell)\lambda^\ell|\lesssim (\tfrac 12)^r. 
\]
For convenience, denote 
\[
P(w, n_m):=\{((i_{w+1}, \dots, i_{n_m}), (j_{w+1}, \dots, j_{n_m}))\mid i_{w+1}\neq j_{w+1}\}. 
\]
Recall that between times $r$ and $n_m$, the measure $\mu_\lambda$ is split between cylinders equally. At time $w>r>r_1$, we also know by \eqref{eq:choice} that measure of any cylinder of length $k$ is equal to $(\tfrac 12)^k$, up to a small error. We thus have, using Theorem \ref{thm:transversality},
\begin{eqnarray*}
&M'(r,w)\\
&\lesssim&2^{-w+r\epsilon}\sum_{P(w,n_m)}2^{-2(n_m-w)+1}\int \chi_{\{\lambda\mid|\sum_{\ell=w+1}^{n_m} (i_\ell - j_\ell)\lambda^\ell|\lesssim (\tfrac 12)^r\}}\d\mathcal L\\
&\lesssim&  2^{-w}2^{-r(1-\epsilon)}\lambda_0^{-w}\\
&\lesssim&   2^{(-2+\epsilon)r} \lambda_0^{-r}(2\lambda_0)^{-(w-r)}.
\end{eqnarray*}
Putting these estimates together, and using that $(2\lambda_0)>1$, we get that 
$$M'(r)\lesssim 2^{r\epsilon}(2^{-n_m}+2^{-2r}\lambda_0^{-r}).$$
\end{proof}

\begin{lemma}\label{lem:r0rn}
For $m$ sufficiently large, for $r_0\leq r<n_m$, we have
$$M(r)\lesssim 2^{2r\epsilon}((\tfrac 12)^{2r}\lambda_0^{-r}\left(\frac{\lambda_1}{\lambda_0}\right)^{n_m} + \lambda_0^{-n_m}\gamma^{-n_m}(\tfrac 12)^{r+n_m}).$$
\end{lemma}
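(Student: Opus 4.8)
\textbf{Proof proposal for Lemma \ref{lem:r0rn}.}

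The plan is to mirror the structure of the proof of Lemma \ref{lem:mainenergy}, but now accounting for the fact that the scale $(\tfrac 12)^r$ is smaller than $\lambda^{n_m}$, so that $k(r)$ lands \emph{inside} the ambiguous/target region $[n_m+\ell_1(n_m), n_m+\ell_2(n_m,\lambda)]$ rather than before it. As before, I would first replace $M(r)$ by its ``degenerate'' upper bound $M'(r)$, using Lemma \ref{agreement} to force $|\bi\wedge\bj|\ge r$ and discarding the tail of the series $\sum_\ell(i_\ell-j_\ell)\lambda^\ell$ beyond the relevant index. Then I would split according to the branching position $w=|\bi\wedge\bj|$: write $M'(r)=\sum_{w=r}^{n_m-1}M'(r,w)+M'(r,\ge n_m)$, where the last term collects all $w\ge n_m$. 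The case $w<n_m$ is handled exactly as in Lemma \ref{lem:mainenergy}: the condition degenerates to $|\sum_{\ell=w+1}^{n_m}(i_\ell-j_\ell)\lambda^\ell|\lesssim(\tfrac 12)^r$, the measure of each length-$k$ cylinder is $(\tfrac 12)^k$ up to the error $2^{r_{-1}\epsilon}$ from \eqref{eq:choice}, and an application of Theorem \ref{thm:transversality} gives a factor $\lesssim(\tfrac 12)^{r}\lambda_0^{-w}$, yielding $M'(r,w)\lesssim 2^{r\epsilon}2^{-2r}\lambda_0^{-r}(2\lambda_0)^{-(w-r)}$; summing the geometric series in $w$ up to $n_m$ produces the term $(\tfrac 12)^{2r}\lambda_0^{-r}(\lambda_1/\lambda_0)^{n_m}$ once we account for how far past $r$ the index $n_m$ can be (the ratio $n_m/r$ is controlled via $r\ge r_0$, $r<r_1$, and the relation between $\log\lambda_0$ and $\log\lambda_1$, which is exactly where the $(\lambda_1/\lambda_0)^{n_m}$ discrepancy between working at $\lambda$ and at $\lambda_0$ enters).

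The genuinely new contribution is the term $w\ge n_m$. Here $\bi$ and $\bj$ agree up to time $n_m$, hence both lie in $R_{n_m}$, and they first differ somewhere in the ambiguous region or later. I would further split this according to whether $w<k(r)$ (so the constraint $|\sum_{\ell\ge w}(i_\ell-j_\ell)\lambda^\ell|<(\tfrac 12)^r$ is still active and must be enforced) or $w\ge k(r)$ (automatically satisfied). For $w\ge k(r)$, the diagonal part of the double integral contributes $\bar\mu_\lambda\times\bar\mu_\lambda(\{|\bi\wedge\bj|\ge k(r)\})$, which by \eqref{eq:measestimate} and \eqref{eq:choice} is at most the mass of a single cylinder at level $k(r)$ restricted to a single branch beyond $n_m+\ell_2$, i.e. roughly $(\tfrac 12)^{n_m}(2\lambda)^{-\xi n_m}\lambda^{-\epsilon\xi n_m}\cdot(\tfrac 12)^{k(r)-(n_m+\ell_2)}$; converting $\xi n_m\approx \ell_2-\ell_1$ and $k(r)\approx r\log2/(-\log\lambda)$ and bounding $\lambda$ by $\lambda_0$ in the exponents where it helps, this should simplify (after using $\lambda^{\ell_2}\le\gamma^{n_m}$ and $2^{-\ell_1}\le\gamma^{n_m}$) to something absorbed in $2^{2r\epsilon}\lambda_0^{-n_m}\gamma^{-n_m}(\tfrac 12)^{r+n_m}$. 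For $n_m\le w<k(r)$ one again uses that within the ambiguous region the measure is uniform over the $\#D_{\ell_2-\ell_1}$ branches, bounds the number of branch pairs with $i_{w+1}\ne j_{w+1}$ obeying the transversality constraint by Theorem \ref{thm:transversality} applied at level $n_m$ (shifted), and sums; the branching count $\#D_{\ell_2-\ell_1}\ge(2\lambda)^{\xi n_m}\lambda^{\epsilon\xi n_m}$ from Lemma \ref{lem:ambiguous} enters in the denominator of the measure and partially cancels the count of branch pairs, leaving a bound of the same shape.

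The main obstacle I anticipate is bookkeeping the $\lambda$ versus $\lambda_0$ discrepancies cleanly: because $\mu_\lambda$ genuinely depends on $\lambda$ (so Fubini is unavailable at this stage, as the paper notes), every estimate on cylinder masses, on $\ell_2(n_m,\lambda)$, and on $\#D_{\ell_2-\ell_1}$ must be made uniform over $\lambda\in[\lambda_0,\lambda_1]$, and the ``slack'' between $\lambda_0^{n_m}$ and $\lambda_1^{n_m}$ is precisely what forces the appearance of the factors $(\lambda_1/\lambda_0)^{n_m}$ and $\lambda_0^{-n_m}\gamma^{-n_m}$ in the statement — these are not losses I can avoid, only ones I must track. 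A secondary technical point is ensuring that the geometric sums over $w$ (both for $w<n_m$ and $n_m\le w<k(r)$) are dominated by their endpoints; since $2\lambda_0>1$ the sum over $w<n_m$ is dominated by $w$ near $n_m$, which is what converts a per-$w$ bound into the stated $(\lambda_1/\lambda_0)^{n_m}$ factor, while the sum for $w$ in the ambiguous range must be checked to be dominated by $w$ near $k(r)$, matching onto the $w\ge k(r)$ estimate at the boundary. Once all pieces are collected and the smaller-order terms are absorbed into $2^{2r\epsilon}$, one obtains $M(r)\lesssim 2^{2r\epsilon}\big((\tfrac 12)^{2r}\lambda_0^{-r}(\lambda_1/\lambda_0)^{n_m}+\lambda_0^{-n_m}\gamma^{-n_m}(\tfrac 12)^{r+n_m}\big)$, as claimed.
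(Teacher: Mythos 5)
Your treatment of the range $w<n_m$ contains a genuine gap. You claim this case is handled ``exactly as in Lemma \ref{lem:mainenergy}'', with the condition $|\sum_{\ell\ge r}(i_\ell-j_\ell)\lambda^\ell|<(\tfrac 12)^r$ degenerating to $|\sum_{\ell=w+1}^{n_m}(i_\ell-j_\ell)\lambda^\ell|\lesssim(\tfrac 12)^r$, followed by transversality at scale $(\tfrac 12)^r$. That degeneration is precisely what fails once $r\ge r_0$: in Lemma \ref{lem:mainenergy} it rests on $\lambda^{n_m+\ell_2(n_m,\lambda)}<(\tfrac 12)^r$, which holds only for $r<r_0$. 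For $r\ge r_0$ the middle block $\sum_{\ell=n_m}^{n_m+\ell_2}$ and the tail $\sum_{\ell>n_m+\ell_2}$ are each of size up to about $\gamma^{n_m}\lambda^{n_m}\gg(\tfrac 12)^r$ and can cancel against the head, so the only conclusion about the head is $|\sum_{\ell=w+1}^{n_m}(i_\ell-j_\ell)\lambda^\ell|\lesssim\gamma^{n_m}\lambda^{n_m}$. Transversality then yields only $\mathcal L\lesssim\lambda_1^{n_m-w}\gamma^{n_m}$, which does not see the scale $2^{-r}$ at all and is far too weak by itself. The paper recovers the missing factor $(\tfrac 12)^{r(1-\epsilon)}$ from the \emph{tail}: exploiting the product structure of $\mu_\lambda$, it factors the integral as $\int I_1 I_2\,\d\mathcal L$, where $I_1$ is the transversality estimate at scale $\gamma^{n_m}\lambda^{n_m}$ over $[w+1,n_m]$, and $I_2$ is a measure bound on the digits of $\bj$ beyond $n_m+\ell_2(n_m,\lambda)$ obtained from Lemma \ref{lem:unifpablo} and Corollary \ref{cor:branching} at level $k(r)$, giving $I_2\lesssim\lambda_0^{-n_m}\gamma^{-n_m}(\tfrac 12)^{r}2^{r\epsilon}$ uniformly in $\lambda$ (this is \eqref{eq:muest1}). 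The product is about $2^{2r\epsilon}(\tfrac 12)^{r+w}\lambda_1^{n_m-w}\lambda_0^{-n_m}$, and summing over $w\ge r$ using $2\lambda_0>1$ gives the first term of the lemma, including the loss $(\lambda_1/\lambda_0)^{n_m}$ that comes from replacing $\lambda^{n_m-w}$ by $\lambda_1^{n_m-w}$. Note that your per-$w$ bound $2^{r\epsilon}2^{-2r}\lambda_0^{-r}(2\lambda_0)^{-(w-r)}$ would sum to something \emph{stronger} than the stated bound, with no $(\lambda_1/\lambda_0)^{n_m}$ factor at all --- a signal that the reduction you invoked is too good to be true in this range of $r$.

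Your sketch for $w\ge n_m$ is closer in spirit to the paper's argument: there the paper simply multiplies the tail estimate \eqref{eq:muest1} by the factor $(\tfrac 12)^{n_m}2^{r\epsilon}$ coming from $|\bi\wedge\bj|\ge n_m$ to obtain $M'(r,n_m)\lesssim 2^{2r\epsilon}\lambda_0^{-n_m}\gamma^{-n_m}(\tfrac 12)^{r+n_m}$; your additional split according to $w\lessgtr k(r)$ and the appeal to \eqref{eq:measestimate} are not needed, though they could likely be made to work. As written, however, the proposal does not establish the lemma because of the gap in the case $w<n_m$.
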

\begin{proof}

We use the reduction of $M(r)$ to $M'(r)$ as in \eqref{eq:Mdashes}, and adopt a similar approach as in the previous lemma, but we now need to split the series from \eqref{eq:series} into three parts. We use

\begin{equation}\label{eq:splitsum}
\sum_{\ell=r}^\infty (i_\ell - j_\ell)\lambda^\ell = \sum_{\ell=r}^{n_m} (i_\ell - j_\ell)\lambda^\ell +\sum_{\ell=n_m}^{n+\ell_2(n_m,\lambda)} (i_\ell - j_\ell)\lambda^\ell +\sum_{\ell=n+\ell_2(n_m,\lambda)}^\infty (i_\ell - j_\ell)\lambda^\ell. 
\end{equation}
For the time being, consider $\bi, \lambda$ and $\bj|_{n_m+\ell_2(n_m,\lambda)}$ fixed. Splitting the last term of \eqref{eq:splitsum} into a difference of the series for $i_\ell$ and $j_\ell$, we will first look for an estimate for the measure of those $j_{n_m+\ell_2(n_m,\lambda)}, \dots$ which satisfy
\begin{equation}\label{eq:condition}
|\sum_{\ell=r}^{n_m+\ell_2(n_m,\lambda)}(i_\ell - j_\ell)\lambda^\ell +\sum_{\ell=n_m+\ell_2(n_m,\lambda)}^\infty i_\ell\lambda^\ell - \sum_{\ell=n_m+\ell_2(n_m,\lambda)}^\infty j_\ell\lambda^\ell|< C(\tfrac 12)^r. 
\end{equation}
Denote $x:=\sum_{\ell=r}^{n_m+\ell_2(n_M,\lambda)} (i_\ell - j_\ell)\lambda^\ell +\sum_{\ell=n_m+\ell_2(n_m,\lambda)}^\infty i_\ell\lambda^\ell$ and notice that the above inequality implies 
\[
\sum_{\ell=n_m+\ell_2(n_m,\lambda)}^\infty j_\ell\lambda^\ell\in B(x,  C(\tfrac 12)^r). 
\]
Recall the definition of $k(r)$ as the first integer for which $\lambda^{k(r)}\le C(\tfrac 12)^r$. Hence, it only adds a multiplicative constant to the estimate to curtail the sum on the left at $k(r)$. Thus using the condition from Lemma \ref{lem:unifpablo} combined with the estimate from Corollary \ref{cor:branching} gives

\begin{equation}\label{eq:muest1}
\begin{aligned}
\mu_{\lambda}&\{(j_{n_m+\ell_2}, \dots )\mid \sum_{\ell=n_m+\ell_2(n_m,\lambda)}^{k(r)} j_\ell\lambda^\ell\in B(x,  C(\tfrac 12)^r)\}\\
&\leq 2^{-k(r)+n_m+\ell_2(n_m,\lambda)}\#\{(j_{n_{m}+\ell_2(n_m,\lambda)+1},\ldots,j_{k(r)})\mid \sum_{\ell=n_m+\ell_2(n_m,\lambda)}^{k(r)} j_\ell\lambda^\ell\in B(x,  C(\tfrac 12)^r)\}\\
&\lesssim \lambda^{(k(r)-n_m-\ell_2(n_m,\lambda))(1-\epsilon)}\\
&\lesssim 2^{-r(1-\epsilon)}\lambda^{-n_m-\ell_2(n_m,\lambda)}\\
&\lesssim 2^{-r(1-\epsilon)}\gamma^{-n_m}\lambda_0^{-n_m}
\end{aligned}
\end{equation}
Use the notation $M'(r, w)$ and $M'(r, n_m)$ as in \eqref{eq:Mdashesw} in the proof of Lemma \ref{lem:mainenergy}, obtained by splitting the sets in $M'(r)$ from \eqref{eq:Mdashes} according to $w=|\bi\wedge \bj|$. In the case when $w= n_m$ we get the following estimate 
\begin{equation}\label{eq:Mrn}
M'(r,n_m)\lesssim \lambda_0^{-n_m}\gamma^{-n_m}(\tfrac 12)^{r}2^{2r\epsilon}(\tfrac 12)^{n_m}.
\end{equation}
This is a consequence of \eqref{eq:muest1}, with the additional term $(\tfrac 12)^{n_m}2^{r\epsilon}$ coming from the condition $|\bi\wedge \bj|\ge n_m$. 

We now consider the case when $w<n_m$. Note that if 
\[
|\sum_{\ell=r}^{n_m} (i_\ell - j_\ell)\lambda^\ell |=|\sum_{\ell=w}^{n_m} (i_\ell - j_\ell)\lambda^\ell |\gtrsim\gamma^{n_m}\lambda^{n_m}, 
\]
for some large enough constant, then there is no chance of the total series summing up to be less than $(\tfrac 12)^r$, since the other two terms in \eqref{eq:splitsum} are both bounded from above by $\gamma^{n_m}\lambda^{n_m}$. 

Recall that $\mu_\lambda$ is a product measure, with the definition up to $n_m$ independent of what happens after. From these observations, we obtain
\begin{align*}
M'(r,w)&=\iiint\chi_{A'(r,w)} \chi_{\{(\bi,\bj,\lambda)\mid|\sum_{\ell=w+1}^\infty (i_{\ell}-j_{\ell})\lambda^l|<2^{-r}\}}\d\mu_{\lambda} \d\mu_{\lambda} \d\mathcal{L}\\
&\lesssim \iiint\chi_{A'(r,w)}\chi_{\{(\bi,\bj,\lambda)\mid|\sum_{\ell=w+1}^{n_m} (i_{\ell}-j_{\ell})\lambda^l|\lesssim \lambda^{n_m}\gamma^{n_m}\}}\cdot\\
&\quad \quad\cdot\chi_{\{(\bi,\bj,\lambda)\mid|\sum_{\ell=n_m+\ell_2(n_m,\lambda)+1}^\infty (i_{\ell}-j_{\ell})\lambda^l|\leq 2^{-r}\}} \d\mu_{\lambda} \d\mu_{\lambda} \d\mathcal{L}\\
&\lesssim \iiint\chi_{A'(r,w)}\chi_{\{(\bi,\bj,\lambda)\mid|\sum_{\ell=w+1}^{n_m} (i_{\ell}-j_{\ell})\lambda^l|\lesssim \lambda^{n_m}\gamma^{n_m}\}} \d\mu_{\lambda} \d\mu_{\lambda} \cdot \\
&\quad\quad \cdot \iint \chi_{\{(\bi,\bj,\lambda)\mid|\sum_{\ell=n_m+\ell_2(n_m,\lambda)+1}^\infty (i_{\ell}-j_{\ell})\lambda^l|\leq 2^{-r}\}} \d\mu_{\lambda} \d\mu_{\lambda} \d\mathcal{L} =: \int I_1 \cdot I_2 \d\mathcal L. 
\end{align*}
For the second integral $I_2 $ we obtain from \eqref{eq:muest1}
\begin{equation}
\label{eq:smallw2}
\iint \chi_{\{(\bi,\bj,\lambda)\mid|\sum_{\ell=n_m+\ell_2(n_m,\lambda)+1}^\infty (i_{\ell}-j_{\ell})\lambda^l|\leq 2^{-r}\}} \d\mu_{\lambda} \d\mu_{\lambda} \lesssim \lambda_0^{-n_m} \gamma^{-n_m}(\tfrac 12)^{r}2^{r\epsilon},
\end{equation}
which is uniform in $\lambda$. Hence, 
\[
\int I_1 I_2 \d\mathcal L\lesssim \lambda_0^{-n_m} \gamma^{-n_m}(\tfrac 12)^{r}2^{r\epsilon}\int I_1 \d\mathcal L.  
\]

In the remaining integral, we split according to the possible length $n_m$ beginnings of $\bi$ and $\bj$, recalling that by the conditions in $A'(r, w)$, $|\bi\wedge \bj|=w$. Hence, by the estimate \eqref{eq:choice},
\begin{align*}
\iiint&\chi_{A'(r,w)}\chi_{\{(\bi,\bj,\lambda)\mid|\sum_{\ell=w+1}^{n_m} (i_{\ell}-j_{\ell})\lambda^l|\lesssim \lambda^{n_m}\gamma^{n_m}\}} \d\mu  \d\mu\d\mathcal L \\
&\lesssim \sum_{|\bi|=n_m}\sum_{|\bj|=n_m}\chi_{\{(\bi, \bj,\lambda)\mid |\bi\wedge \bj|=w\text{ and }|\sum_{\ell=w+1}^{n_m} (i_{\ell}-j_{\ell})\lambda^l|\lesssim \lambda^{n_m}\gamma^{n_m}\}} \\
&\lesssim 2^{r\epsilon}\sum_{|\bi|=n_m}\sum_{|\bj|=n_m}(\tfrac 12)^{2n_m} \int \chi_{\{\{(\bi, \bj,\lambda)\mid |\bi\wedge \bj|=w\text{ and }|\sum_{\ell=w+1}^{n_m} (i_{\ell}-j_{\ell})\lambda^l|\lesssim \lambda^{n_m}\gamma^{n_m}\}} \d \mathcal L. 
\end{align*}
In the remaining integral we can now apply transversality (Theorem \ref{thm:transversality}) to estimate, for any fixed words $|\bi|=n_m$ and $|\bj|=n_m$ with $|\bi\wedge\bj|=w$, $r\le w\le n_m$, 
\begin{equation}\label{eq:muest2}
\begin{aligned}
\int\chi_{\{(\bi,\bj,\lambda)\mid|\sum_{\ell=w+1}^{n_m} (i_{\ell}-j_{\ell})\lambda^l|\lesssim \lambda^{n_m}\gamma^{n_m}, |\bi\wedge \bj|=w\}}\d\mathcal L &\le 
\mathcal L\{\lambda\mid |\sum_{\ell=w+1}^{n_m} (i_\ell - j_\ell)\lambda^\ell |\lesssim \gamma^{n_m}\lambda^{n_m}\}\\
&\lesssim \lambda_1^{n_m-w}\gamma^{n_m} 
\end{aligned}
\end{equation}
where we are using that $\lambda^{n_m-w}\leq\lambda_1^{n_m-w}$.
Hence, adding up \eqref{eq:muest2} over the possible $\bi, \bj$ as above, we obtain as an upper bound to $\int I_1 \d \mathcal L$
\begin{equation}\label{eq:bigw2}
\iiint\chi_{\{(\bi,\bj,\lambda)\mid|\sum_{\ell=w+1}^{n_m} (i_{\ell}-j_{\ell})\lambda^l|\lesssim \lambda^{n_m}\gamma^{n_m}, |\bi\wedge \bj|=w\}}\d\mathcal L  \d\nu  \d\nu \lesssim 2^{r\epsilon}\lambda_1^{n_m-w}\gamma^{n_m}(\tfrac 12)^w. 
\end{equation}

The estimate for $M$ now consists of these pieces, \eqref{eq:smallw2}, and \eqref{eq:bigw2} which, summing up over $w$,  combine to give:
\[
\sum_w M(r, w)\lesssim \sum_{w=r}^{n_m} 2^{2r\epsilon}(\tfrac 12)^{r+w}\lambda_0^{-w}\left(\frac{\lambda_1}{\lambda_0}\right)^{n_m}.
\]
Summing up the above and \eqref{eq:Mrn} gives
\[
M(r)\lesssim 2^{2r\epsilon}\left((\tfrac 12)^{2r}\lambda_0^{-r}\left(\frac{\lambda_1}{\lambda_0}\right)^{n_m}+\lambda_0^{-n}\gamma^{-n_m}(\tfrac 12)^{r+n_m}\right) 
\]
as desired. 
\end{proof}

In the following few lemmas, we will momentarily abandon the energy integral and the corresponding product measure, and instead we investigate the measure $\overline{\mu}_{\lambda}(Q(x,C(\tfrac 12)^r))$ for some fixed $\lambda \in \Lambda_\epsilon$. Good upper bounds $\overline{\mu}_{\lambda}(Q(x, C(\tfrac 12)^r)\lesssim b(r)$ will imply bounds for $M(r)$, as
\begin{equation}\label{eq:fixedmsr}
M(r)\lesssim \mathcal L(\Lambda_\epsilon)\mu(\mathcal{C})b(r). 
\end{equation}

\begin{lemma}\label{lem:nrn+ell_1}
Let $\lambda \in \Lambda_\epsilon$. Let $x\in \mathcal{C}$ and $n_m<r<n_m+\ell_1(n_m)$. Then 
\[
\overline{\mu}_{\lambda}((Q(x,C(\tfrac 12)^r)\lesssim (\tfrac 12)^{n_m+r}\lambda^{-n_m-\ell_2(n_m,\lambda)}2^{r\epsilon}.
\]
\end{lemma}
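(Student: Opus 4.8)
The plan is to track the mass of the cube $Q = Q(x, C(\tfrac12)^r)$ through the definition of $\mu_\lambda$, exactly as in the analogous lemmas of Sections \ref{sec:uniquelower} (compare Lemma \ref{lma:r0rnl}), but now accounting for the branching in the ambiguous region. First I would note that since $r > n_m$, by Lemma \ref{agreement} any $\bi \in \Sigma$ with $\pi(\bi) \in Q$ must satisfy $\bi|_r = \bx|_r$; in particular $\bi$ agrees with $\bx$ past level $n_m$, so the cube $Q$ meets only the single length-$n_m$ cylinder $[\bx|_{n_m}]$ and indeed only cylinders extending $\bx|_r$. The mass $\mu_\lambda[\bx|_{n_m}]$ is, up to the error factor $2^{r_{-1}\epsilon} \le 2^{r\epsilon}$ from \eqref{eq:choice}, equal to $(\tfrac12)^{n_m}$.

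Next I would pass through the ambiguous region. Between levels $n_m$ and $n_m + \ell_1(n_m)$ the digits of any point of $\mathcal{C}$ are forced to equal those of $\bz$, so $\mu_\lambda$ does not split there; between $n_m + \ell_1(n_m)$ and $n_m + \ell_2(n_m,\lambda)$ the mass $\mu_\lambda[\bx|_{n_m}]$ is divided evenly among the $\#D_{\ell_2 - \ell_1}(\sigma^{\ell_1(n_m)}(\bz), \lambda^{\ell_2-\ell_1})$ branches, so by \eqref{eq:measestimate} (or directly by Lemma \ref{lem:ambiguous}) the mass of any cylinder of length $n_m + \ell_2(n_m,\lambda)$ inside $[\bx|_{n_m}]$ is at most $\mu_\lambda[\bx|_{n_m}] (2\lambda)^{-\xi n_m}\lambda^{-\epsilon\xi n_m}$, which I will bound crudely by $(\tfrac12)^{n_m} 2^{r\epsilon} \lambda^{-\ell_2(n_m,\lambda)} 2^{-\xi n_m + \ell_1(n_m)}$-type quantities; since $\xi n_m \approx \ell_2(n_m,\lambda_0) - \ell_1(n_m)$ and $2^{-\ell_2} \lesssim \lambda^{-\ell_2}$ this is comfortably $\lesssim (\tfrac12)^{n_m}\lambda^{-\ell_2(n_m,\lambda)} 2^{r\epsilon}$ after absorbing the $\epsilon\xi n_m$ term into $2^{r\epsilon}$ (valid since $\xi n_m \lesssim \ell_2 \le r$ is false — here we must be careful, see below). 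Finally, past level $n_m + \ell_2(n_m,\lambda)$ the measure again splits evenly between two digits, so restricting to the cube $Q$ of radius $\asymp (\tfrac12)^r$ costs an extra factor: the $x$-coordinate constraint forces the continuation to lie in an interval of length $\asymp (\tfrac12)^r$, and since $(\tfrac12)^r = \lambda^{k(r)}$ with $k(r) \ge n_m + \ell_2(n_m,\lambda)$ (because $r \ge n_m$ and hence $r > r_0$), Corollary \ref{cor:branching} bounds the number of contributing length-$k(r)$ cylinders by $C_1 (2\lambda^{1-\epsilon})^{k(r) - n_m - \ell_2(n_m,\lambda)}$, each of mass $(\tfrac12)^{k(r) - n_m - \ell_2(n_m,\lambda)}$ relative to level $n_m + \ell_2$. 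Multiplying out gives a contribution $\lesssim \lambda^{(1-\epsilon)(k(r) - n_m - \ell_2(n_m,\lambda))} \asymp (\tfrac12)^{r(1-\epsilon)} \lambda^{-n_m - \ell_2(n_m,\lambda)}$. Combining the three factors yields
\[
\overline{\mu}_\lambda(Q(x, C(\tfrac12)^r)) \lesssim (\tfrac12)^{n_m} \cdot (\tfrac12)^{r(1-\epsilon)} \lambda^{-n_m - \ell_2(n_m,\lambda)} \cdot 2^{r\epsilon} \lesssim (\tfrac12)^{n_m + r} \lambda^{-n_m - \ell_2(n_m,\lambda)} 2^{2r\epsilon},
\]
which is the claimed bound (with $\epsilon$ relabelled).

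The main obstacle, and the place I would spend the most care, is bookkeeping the two distinct sources of $\lambda$-powers and $2$-powers so that they genuinely collapse to $(\tfrac12)^{n_m+r}\lambda^{-n_m-\ell_2(n_m,\lambda)}$ without a leftover factor that is exponential in $n_m$ or $r$. Concretely: the ambiguous-region splitting contributes a factor $(2\lambda)^{-\xi n_m}$ which is \emph{smaller} than $1$, so it only helps; but one must check that the $\lambda^{-\epsilon \xi n_m}$ slack and the replacement $2^{-\ell_2} \le \lambda^{-\ell_2}$ are both harmless, i.e. can be swallowed into $2^{r\epsilon}$ using $\xi n_m \lesssim \ell_2(n_m,\lambda) \le k(r) \lesssim r$ — note $n_m < r$ here so $\ell_2(n_m,\lambda) < k(r)$ is automatic and $\xi n_m \approx \ell_2(n_m,\lambda_0) - \ell_1(n_m) < \ell_2(n_m,\lambda) \lesssim r$, so indeed $\epsilon\xi n_m \le \epsilon r$ and the slack is absorbed. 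The other subtlety is justifying $k(r) \ge n_m + \ell_2(n_m,\lambda)$ so that Corollary \ref{cor:branching} applies in the "last region": this is exactly the condition $r \ge r_0$, and since $r > n_m > r_0$ (using $\lambda > \tfrac1{2\gamma}$, as recorded just before Lemma \ref{lem:mainenergy}) it holds. Everything else is the same routine cylinder-counting used repeatedly in Sections \ref{sec:upperbounds}--\ref{sec:uniquelower}.
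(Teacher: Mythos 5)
Your argument is correct and follows essentially the same route as the paper's proof: a single level-$n_m$ cylinder of mass $\lesssim(\tfrac 12)^{n_m}2^{r\epsilon}$ meets $Q$, and the proportion of each level-$(n_m+\ell_2(n_m,\lambda))$ cylinder's mass lying in $Q$ is bounded by $((\tfrac 12)^r\lambda^{-n_m-\ell_2(n_m,\lambda)})^{1-\epsilon}$ via Lemma \ref{lem:unifpablo} and the counting of Corollary \ref{cor:branching}. Your detour through the ambiguous-region splitting is unnecessary (as you yourself observe, that factor is at most $1$ and cancels when summing over the level-$(n_m+\ell_2)$ cylinders), and the resulting $2^{2r\epsilon}$ versus $2^{r\epsilon}$ is immaterial since $\epsilon$ is arbitrary and the paper itself uses $2^{2r\epsilon}$ when invoking this lemma later.
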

\begin{proof}
This case is in a sense analogous to Lemma \ref{lma:r0rnl}. Notice first of all that since $r_0<n_m<r<n_m+\ell_1(n_m)$, the cube $Q(x, C(\tfrac 12)^r)$ intersects only one cylinder $\pi[\bx|_{n_m+\ell_1(n_m)}]$ from level $n_m+\ell_1(n_m)$, but it does not contain it. Hence, the question is, how much of the mass of this cylinder $\bar\mu_\lambda[\pi[\bx|_{n_m+\ell_1(n_m)}]]\lesssim (\tfrac 12)^{n_m}$ is within the ball. 

Within this cylinder the mass is distributed between cylinders from level $n_m+\ell_2(n_m,\lambda)$, which is the ambiguous region for distributing mass in the definition of $\mu_\lambda$. Each of the projected cylinders $\pi[\bj_{n_m+\ell_2(n_m, \lambda)}]$ is of width $\lambda^{n_m+\ell_2(n_m, \lambda)}\gtrsim C(\tfrac 12)^r$. Hence at worst, $Q(x, C(\tfrac 12)^r)$ intersects all of these cylinders, but it does not contain them either. 

Recall that ultimately, the behaviour of the cylinders beyond level $k(r)$ makes no difference to the measure estimates, so let us estimate the cylinders from level $k(r)$. Within those level $n_m+\ell_2(n_m, \lambda)$ cylinders intersecting $\pi^{-1}(Q(x,C2^{-r} ))$, the number of level $k(r)$ cylinders which  intersect $\pi^{-1}(Q(x,C2^{-r} ))$ is,  up to  a constant, $(2\lambda^{1-\epsilon})^{k(r)-n_m-\ell_2(\lambda,n_m)}$. This estimate follows from the condition in Lemma \ref{lem:unifpablo} combined with the estimate in Corollary \ref{cor:branching}. This gives
$$\overline{\mu}_{\lambda}(\pi[\bj|_{n_m+\ell_2(n_m,\lambda)}]\cap Q(x, C(\tfrac 12)^r))
\lesssim 2^{r\epsilon}(\tfrac 12)^{r}\lambda^{(-n_m-\ell_2(n_m,\lambda))}.
$$
 Hence, we finally obtain
\[
\begin{aligned}
\overline{\mu}_{\lambda}(Q(x, C(\tfrac 12)^r))&\le \sum_{|\bj|=n_m+\ell_2(n_m,\lambda): \pi[\bj]\cap Q(x, C(\tfrac 12)^r)\neq \emptyset}\mu(\pi[\bj]\cap Q(x, C(\tfrac 12)^r))\\
&\lesssim (\tfrac 12)^{n_m+r(1-\epsilon)}\lambda^{(-n_m-\ell_2(n_m, \lambda))(1-\epsilon)}\lesssim 2^{r\epsilon}(\tfrac 12)^{n_m}\lambda^{(-{n_m}-\ell_2(n_m,\lambda))}
\end{aligned}
\]

as desired. 
\end{proof}

\begin{lemma}\label{lm:n+ell_1nn+ell_2}
Let $\lambda\in \Lambda_\epsilon$. Let $x=(x_1,x_2)\in \mathcal{C}$ and $n_m+\ell_1(n_m)<r<n_m+\ell_2(n_m,\lambda)$. Then 
\[
\overline{\mu}_{\lambda}(Q(x, C(\tfrac 12)^r))\lesssim 2^{2r\epsilon}\lambda^{-\epsilon\xi n_m}\lambda^{-r-\xi n_m}2^{-2r+\ell_2(n_m,\lambda)-\xi n_m}.
\]
\end{lemma}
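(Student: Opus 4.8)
The plan is to follow the template of Lemma \ref{lem:nrn+ell_1}: we will bound $\overline{\mu}_{\lambda}(Q(x,C(\tfrac 12)^r))$ by the number of level-$k(r)$ cylinders that meet the target cube, multiplied by the maximal $\mu_{\lambda}$-mass of such a cylinder, where $k(r)$ is as in Definition \ref{def:alltheells}. The feature of this range of $r$ is that $r$ lies strictly inside the ambiguous region $(n_m+\ell_1(n_m),\,n_m+\ell_2(n_m,\lambda))$, while $k(r)$ has already passed its right endpoint, i.e.\ $k(r)\ge n_m+\ell_2(n_m,\lambda)$ for all large $m$. This last inequality is elementary: since $\lambda^{k(r)}\le(\tfrac 12)^r<\lambda^{k(r)-1}$ one has $\lambda^{k(r)}\asymp 2^{-r}$ with constants depending only on $[\lambda_0,\lambda_1]$, and because $r>n_m+\ell_1(n_m)$ and $2\lambda>1$, the quantity $(n_m+\ell_1(n_m))\tfrac{\log 2}{-\log\lambda}$ already exceeds $n_m+\ell_2(n_m,\lambda)$ by an amount linear in $n_m$. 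Having $k(r)\ge n_m+\ell_2(n_m,\lambda)$ is exactly what lets us factor the weight of a long cylinder through the end of the ambiguous region.

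First I would invoke Lemma \ref{agreement}: any $\bi$ with $\pi(\bi)\in Q(x,C(\tfrac 12)^r)$ has $\bi|_r=\bx|_r$, hence every level-$k(r)$ cylinder meeting the cube has its first $r$ symbols equal to $\bx|_r$, and so
\[
\overline{\mu}_{\lambda}(Q(x,C(\tfrac 12)^r))\le\sum_{\bj}\mu_{\lambda}[\bj],
\]
where the sum ranges over $\bj$ with $|\bj|=k(r)$, $\bj|_r=\bx|_r$ and $\pi[\bj]\cap Q(x,C(\tfrac 12)^r)\neq\emptyset$. For a single such $\bj$ of positive weight, the even splitting of mass past level $n_m+\ell_2(n_m,\lambda)$ gives $\mu_{\lambda}[\bj]=\mu_{\lambda}[\bj|_{n_m+\ell_2(n_m,\lambda)}](\tfrac 12)^{k(r)-n_m-\ell_2(n_m,\lambda)}$, and then \eqref{eq:measestimate} combined with the negligibility bound \eqref{eq:choice} at level $n_m$ yields
\[
\mu_{\lambda}[\bj]\lesssim 2^{\epsilon r_{-1}}(2\lambda)^{-\xi n_m}\lambda^{-\epsilon\xi n_m}(\tfrac 12)^{k(r)-\ell_2(n_m,\lambda)}.
\]
To count the cylinders in the sum, write $\bj=\bx|_r\bj'$ with $|\bj'|=k(r)-r$. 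Since $\proj_1(\pi[\bj])=\pi_I[\bj]$ is the image of $\pi_I[\bj']$ under a similarity contracting by $\lambda^r$, the condition $\pi[\bj]\cap Q(x,C(\tfrac 12)^r)\neq\emptyset$ forces $\pi_I[\bj']$ to meet an interval of half-length comparable to $(\tfrac 12)^r\lambda^{-r}\asymp\lambda^{k(r)-r}=\lambda^{|\bj'|}$. Combining the uniform bound $\overline{\nu}_{\lambda}(I)\lesssim|I|^{1-\epsilon}$ from Lemma \ref{lem:unifpablo} with the argument of Corollary \ref{cor:branching}, the number of admissible $\bj'$ is therefore $\lesssim 2^{k(r)-r}\lambda^{(k(r)-r)(1-\epsilon)}$, with the implied constant uniform over $\lambda\in\Lambda_\epsilon$.

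Finally I would multiply the two estimates, using $2^{k(r)-r}(\tfrac 12)^{k(r)-\ell_2(n_m,\lambda)}=2^{-r+\ell_2(n_m,\lambda)}$ and $\lambda^{(k(r)-r)(1-\epsilon)}\asymp(2^{-r}\lambda^{-r})^{1-\epsilon}=2^{-r}\lambda^{-r}\,2^{r\epsilon}\lambda^{r\epsilon}$, to obtain
\[
\overline{\mu}_{\lambda}(Q(x,C(\tfrac 12)^r))\lesssim 2^{-2r+\ell_2(n_m,\lambda)-\xi n_m}\lambda^{-r-\xi n_m-\epsilon\xi n_m}\cdot 2^{\epsilon r_{-1}+r\epsilon}\lambda^{r\epsilon}.
\]
Since $r_{-1}<r_0<n_m<r$ and $\lambda<1$, the error factor satisfies $2^{\epsilon r_{-1}+r\epsilon}\lambda^{r\epsilon}\le 2^{2r\epsilon}$, which gives exactly the claimed bound. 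I expect the main friction to be purely the $\epsilon$-bookkeeping — making sure that the three separate losses ($2^{\epsilon r_{-1}}$ from discarding the levels below $n_m$, $\lambda^{-\epsilon\xi n_m}$ from the lower bound on the branching number $\#D$ entering \eqref{eq:measestimate}, and $2^{r\epsilon}\lambda^{r\epsilon}$ from Corollary \ref{cor:branching}) all collapse into the single factor $2^{2r\epsilon}$ in the statement — together with the clean verification of the integer inequality $k(r)\ge n_m+\ell_2(n_m,\lambda)$ across this whole range of $r$, which is what makes the factorisation of $\mu_{\lambda}[\bj]$ legitimate and separates this case from the earlier one in Lemma \ref{lem:nrn+ell_1}.
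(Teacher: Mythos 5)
Your proof is correct and follows essentially the same route as the paper's: Lemma \ref{agreement} to fix the first $r$ symbols, the mass estimates \eqref{eq:choice} and \eqref{eq:measestimate} for the weight of a long cylinder, and the Bernoulli-convolution counting of Lemma \ref{lem:unifpablo} combined with Corollary \ref{cor:branching}, with the same $\epsilon$-bookkeeping collapsing into $2^{2r\epsilon}$. The only organisational difference is that you perform a single count at level $k(r)$, whereas the paper first counts the positive-mass descendants of $[\bx|_r]$ at level $n_m+\ell_2(n_m,\lambda)$ and then the fraction of each such cylinder's mass lying inside the cube; the resulting arithmetic is identical.
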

\begin{proof}
Let $\pi(\bx)=x$. As before, we know that $Q(x, C2^{-r})$ only intersects one cylinder from level $r$. Hence, we first want to estimate the measure of the cylinder $\mu[\bx|_r]$. Notice that 
\begin{align*}
\mu_\lambda[\bx|_r]=\sum\mu_\lambda[\bj], 
\end{align*}
where the sum is over those cylinders $[\bj]$ with $|\bj|=\ell_2(n_m,\lambda)+n_m$, 
$\bj|_r=\bx|_r$ and $\pi([\bj])\cap \mathcal{C} \neq \emptyset$. The last of 
these conditions implies that $\pi[\bj]\cap B(\pi(\bx|_{n_m}\bz), 
\gamma^{n_m}\lambda^{n_m})\neq \emptyset$, which in turn implies that $
\pi([\sigma^{n_m}(\bj)])\cap B(\pi(\sigma^{n_m}(\bx)), 2\gamma^{n_m}\lambda^{n_m})$ 
since $\pi(\sigma^{n_m}(\bx))\in B(z, \gamma^{n_m})$. We now estimate the 
number of such $\bj$. Here, since $\lambda^{\ell_2(n_m,\lambda)}\approx\gamma^n$
\begin{align*}
\{|\bj|=n_m+\ell_2&(n_m, \lambda)\mid \bj|_r=\bx|_r, \pi[\sigma^{n_m}\bj]\cap B(\pi(\sigma^{n_m}(\bx)), 2\gamma^{n_m})\neq \emptyset\}\\
&=\{|\bj|=n_m+\ell_2(n_m, \lambda)\mid |\sum_{i=r}^{n_m+\ell_2(n_m, \lambda)}(x_{i} - j_i)\lambda^i|<2\lambda^{n_m+\ell_2(n_m, \lambda)}\}\\
&=\{|\bj|=n_m+\ell_2(n_m, \lambda)\mid |\sum_{i=1}^{n_m+\ell_2(n_m, \lambda)-r}(x_{i+r} - j_{i+r})\lambda^i|<2\lambda^{n_m+\ell_2(n_m, \lambda)-r}\}. 
\end{align*}
This condition is equivalent to asking, how many words of length $|\ba|=n_m+\ell_2(n_m, \lambda)-r$ are there, such that for some infinite word $\bi$ in $[\ba]$, we have $\pi(\bi)\in B(\sigma^r(\bx), 2\lambda^{n_m+\ell_2(n_m, \lambda)-r})$. As before by combining the condition in Lemma \ref{lem:unifpablo} with the technique from Corollary \ref{cor:branching}, we can bound this from above by a constant multiple of $(2\lambda)^{(r-n_m+\ell_2(n_m,\lambda))(1-\epsilon)}$. 

By \eqref{eq:measestimate} for each $|\bj|=n_m+\ell_2(n_m, \lambda)$ in the support of $\mu_\lambda$ we have,
\[
\mu_{\lambda}[\bj]\lesssim 2^{r\epsilon}2^{-{n_m}}(2\lambda)^{-\xi n_m}\lambda^{-\epsilon \xi n_m}.
\]
Hence, 
\[
\mu[\bx|_r]= \sum\mu[\bj]\lesssim 2^{r\epsilon}(2\lambda)^{(n_m+\ell_2(n_m,\lambda)-r)(1-\epsilon)}2^{-n_m}2^{r\epsilon}(2\lambda)^{-\xi n_m}\lambda^{-\epsilon \xi n_m}. 
\]

The $\mu$-measure inside this cylinder $[\bx|_r]$ is divided evenly between its sub-cylinders from level $n_m+\ell_2(n_m,\lambda)$. Since $r<n_m+\ell_2(n_m,\lambda)$, these sub-cylinders are not contained in the ball $Q(x, C(\tfrac 12)^r)$. However, using Lemma \ref{thm:pablo} we know for each $|\bj|=n_m+\ell_2(n_m,\lambda)$ that   
\begin{align*}
\#\{\omega\in\{0,1\}^{k-n_m-\ell_2(n_m,\lambda)}\mid&\sum_{i=1}^{k-n_m-\ell_2(n_m,\lambda)}\omega_i\lambda^i\in B(x_1-\sum_{i=1}^{n_m+\ell_2(n_m,\lambda)}j_i\lambda_i,C2^{-r})\}\\
&\lesssim (2\lambda^{1-\epsilon})^{k(r)-n_m+\ell_2(n_m,\lambda)}.
\end{align*}
This gives that for each $|\bj|=n_m+\ell_2(n_m,\lambda)$
$$\mu_{\lambda}(\{\omega\mid\omega\in [\bj]\cap Q(x,C2^{-r})\})\lesssim (\lambda^{1-\epsilon})^{{k(r)-n_m-\ell_2(n_m,\lambda)}}\leq (2^{-r}\lambda^{-\ell_2-n_m})^{1-\epsilon}.$$

Putting this together 
\begin{align*}
\overline{\mu}_{\lambda}(Q(x, C2^{-r}))&\lesssim (2^{-r}\lambda^{-\ell_2(n_m,\lambda)-n_m})^{1-\epsilon}2^{r\epsilon}(2\lambda)^{(n_m+\ell_2(n_m,\lambda)-r)(1-\epsilon)}2^{-n_m}2^{r\epsilon}(2\lambda)^{-\xi n_m}\lambda^{-\epsilon \xi n_m}\\
&\lesssim 2^{2r\epsilon}\lambda^{-\epsilon\xi n_m}\lambda^{-r-\xi n_m}2^{-2r+\ell_2(n_m,\lambda)-\xi n_m}.
\end{align*}
\end{proof}
We now are left with only the final case to consider. 
\begin{lemma}\label{lem:rn}
For $n_{m}+\ell_{2}(n_m, \lambda)\le r<r_1<n_{m+1}$. Then for $x\in\mathcal{C}$, 
\[
\overline{\mu}_{\lambda}(Q(x, C(\tfrac 12)^r)\lesssim  \lambda^{-\epsilon r} 2^{2r\epsilon} 2^{-2r+\ell_2(n_m,\lambda)-\xi n_m}\lambda^{-\xi n_m-r} 
\]
\end{lemma}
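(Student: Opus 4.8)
Throughout fix $\lambda\in\Lambda_\epsilon$ and write $\ell_2=\ell_2(n_m,\lambda)$, $k=k(r)$, and $\bx\in\pi^{-1}(x)$. The plan is to follow the template of Lemmas~\ref{lem:nrn+ell_1} and \ref{lm:n+ell_1nn+ell_2}: bound the mass of the one level-$r$ cylinder that can meet $Q:=Q(x,C(\tfrac12)^r)$, and then bound the proportion of that mass which genuinely lies in $Q$. By Lemma~\ref{agreement}, $\pi(\bi)\in Q$ forces $\bi|_r=\bx|_r$, so $\overline{\mu}_\lambda(Q)=\overline{\mu}_\lambda(Q\cap\pi[\bx|_r])$.

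First I estimate $\mu_\lambda[\bx|_r]$. Since $r\ge n_m+\ell_2$, the cylinder $[\bx|_r]$ sits strictly past the ambiguous region of the $m$-th return, so by the definition of $\mu_\lambda$ its mass comes from $\mu_\lambda[\bx|_{n_m+\ell_2}]$ by plain even splitting, $\mu_\lambda[\bx|_r]=(\tfrac12)^{r-n_m-\ell_2}\mu_\lambda[\bx|_{n_m+\ell_2}]$. Feeding in \eqref{eq:measestimate} (which is where the branching lower bound of Lemma~\ref{lem:ambiguous} enters) and then \eqref{eq:choice} together with $r_{-1}(n_m)\le r$, one gets
\[
\mu_\lambda[\bx|_r]\lesssim 2^{\epsilon r}\,\lambda^{-\epsilon\xi n_m}\,\lambda^{-\xi n_m}\,2^{-r+\ell_2-\xi n_m}.
\]

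Next, refine inside $[\bx|_r]$ by descending to level $k=k(r)$. Since $\lambda>\tfrac12$, $\lambda^r>2^{-r}$, so $k>r\ge n_m+\ell_2$; and because $r<r_1$ and $\lambda\le\lambda_1$, the definition of $r_1$ together with the standing conditions on $(n_m)$ force $k\le n_{m+1}$. Consequently $\mu_\lambda$ is still distributed by even splits between levels $r$ and $k$, so each level-$k$ sub-cylinder of $[\bx|_r]$ carries mass $(\tfrac12)^{k-r}\mu_\lambda[\bx|_r]$. The projected level-$k$ cylinders have width comparable to $C(\tfrac12)^r$ by the definition of $k(r)$, so the counting argument behind Corollary~\ref{cor:branching}, run with Lemma~\ref{lem:unifpablo} in place of Theorem~\ref{thm:pablo}, bounds the number of such sub-cylinders meeting $Q$ by $\lesssim(2\lambda^{1-\epsilon})^{k-r}$. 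Hence
\[
\overline{\mu}_\lambda(Q)\lesssim (2\lambda^{1-\epsilon})^{k-r}(\tfrac12)^{k-r}\,\mu_\lambda[\bx|_r]=\lambda^{(1-\epsilon)(k-r)}\mu_\lambda[\bx|_r]\lesssim 2^{-r(1-\epsilon)}\lambda^{-(1-\epsilon)r}\,\mu_\lambda[\bx|_r],
\]
using $\lambda^k\lesssim 2^{-r}$ in the last step. Substituting the bound for $\mu_\lambda[\bx|_r]$, collecting the powers of $2$ and of $\lambda$, and throwing away the harmless factor $\lambda^{\epsilon(r-\xi n_m)}\le1$ (legitimate since $r\gtrsim\xi n_m$), produces exactly $\lambda^{-\epsilon r}2^{2r\epsilon}2^{-2r+\ell_2-\xi n_m}\lambda^{-\xi n_m-r}$.

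The only genuinely delicate point is $k(r)\le n_{m+1}$ for every $r<r_1$ and every $\lambda\in[\lambda_0,\lambda_1]$; this is precisely what the choice of $r_1(n_m)$ and the growth conditions on the sequence $(n_m)$ are designed to guarantee, and it is what keeps the refinement above inside the even-splitting regime. Were $k(r)$ allowed to overshoot $n_{m+1}$, one would have to carry the concentration and ambiguous regions of the $(m{+}1)$-st return along as well, which is a substantially heavier bookkeeping; everything else here is the same $\epsilon$-exponent tracking already met in Lemmas~\ref{lem:mainenergy}--\ref{lm:n+ell_1nn+ell_2}. Finally, plugging the resulting bound into \eqref{eq:fixedmsr} gives the corresponding estimate for $M(r)$ over this range of $r$.
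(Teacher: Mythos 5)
Your proof is correct and follows essentially the same route as the paper's: it bounds $\mu_\lambda[\bx|_r]$ via \eqref{eq:measestimate} (together with \eqref{eq:choice} and the even splitting past $n_m+\ell_2(n_m,\lambda)$), and then runs the level-$k(r)$ counting argument of Lemma~\ref{lem:lower31} using Corollary~\ref{cor:branching} with the uniform constant from Lemma~\ref{lem:unifpablo}, picking up the factor $\lambda^{(1-\epsilon)(k(r)-r)}$. The extra checks you supply (that $k(r)\le n_{m+1}$ so the refinement stays in the even-splitting regime, and the disposal of the residual $\lambda^{\epsilon(r-\xi n_m)}\le 1$) are exactly the details the paper leaves implicit.
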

\begin{proof}
It follows from \eqref{eq:measestimate} that
\begin{eqnarray*}
\mu_{\lambda}[\bx|_{r}]&\lesssim& 2^{r\epsilon}2^{-r+\ell_2(n_m)}(2\lambda)^{-\xi n_m}\lambda^{-\epsilon \xi n_m}\\
&\lesssim&\lambda^{-\epsilon r} 2^{r\epsilon}2^{-r+\ell_2(n_m,\lambda)-\xi n_m}\lambda^{-\xi n_m}
\end{eqnarray*}

From here the argument is analogous to the argument in Lemma \ref{lem:lower31}, with the above estimate replacing \eqref{eq:eq1}. Hence we get that
\begin{eqnarray*}
\overline{\mu}_{\lambda}(Q(x, C(\tfrac 12)^r)&\lesssim&  \lambda^{-\epsilon r} 2^{r\epsilon}2^{-r+\ell_2(n_m,\lambda)-\xi n_m}\lambda^{-\xi n_m}\lambda^{(k(r)-r)(1-\epsilon)}\\
&\lesssim&  \lambda^{-\epsilon r} 2^{2r\epsilon} 2^{-2r+\ell_2(n_m,\lambda)+\xi n_m}\lambda^{-\xi n_m-r}
\end{eqnarray*}
which gives what we want.
\end{proof}

To complete the proof of Part (3) Theorem \ref{thm:main}, we need to show that for any $\gamma\in (0,1)$ for $\mathcal L$-almost every $\lambda\in [\tfrac 1{2\gamma}, \bar\lambda)$, for $\bar \nu_\lambda$-almost every $z\in F$, 
\[
\dimH R^*(z, \lambda)\geq \frac{2\log 2+\log \lambda}{\log (2/\gamma)}. 
\] 
We begin by collecting all the Lemmas in this section together to prove the following statement. 
Recall that we have set
$$\eta(\lambda_0,\lambda_1)=\left(\frac{\log \lambda_0}{\log \lambda_1}-1\right)+(s+1)\left(\frac{\log\gamma}{\log \lambda_1}-\frac{\log\gamma}{\log \lambda_0}\right)$$
\begin{lemma}\label{lowerboundmain}
Let $\gamma\in (0,1)$. There exists $C_0>0$ such that for all $\epsilon>0$ there is a set $\Lambda_\epsilon\subset [\lambda_0, \lambda_1]$ with $\mathcal L (\Lambda_\epsilon)\geq \lambda_1-\lambda_0-\epsilon$, such that for $\mathcal L$-almost every $\lambda\in \Lambda_\epsilon$
\[
\dimH R^*(\pi_{\lambda}{\omega}, \lambda)\geq\frac{2\log 2+\log \lambda_0}{\log (2/\gamma)}-C_0\epsilon-\eta(\lambda_0,\lambda_1). 
\] 
\end{lemma}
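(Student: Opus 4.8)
The plan is to feed the energy estimate \eqref{eq:energystart} with the bounds for $M(r)$ assembled in Lemmas \ref{lem:mainenergy}, \ref{lem:r0rn}, \ref{lem:nrn+ell_1}, \ref{lm:n+ell_1nn+ell_2}, \ref{lem:rn}, and to check that the resulting series $\sum_r 2^{rt} M(r)$ converges for every $t$ strictly below the target value $s-C_0\epsilon-\eta(\lambda_0,\lambda_1)$. Fix $\epsilon>0$ and let $\Lambda_\epsilon$ be the set on which Lemmas \ref{lem:lowerboundbranching} and \ref{lem:unifpablo} both hold, so $\mathcal L(\Lambda_\epsilon)\ge \lambda_1-\lambda_0-\epsilon$. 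First I would split the sum over $r\ge 1$ into the blocks $r_{-1}(n_m)\le r<r_0(n_m)$, $r_0(n_m)\le r<n_m$, $n_m<r<n_m+\ell_1(n_m)$, $n_m+\ell_1(n_m)<r<n_m+\ell_2(n_m,\lambda_0)$, and $n_m+\ell_2(n_m,\lambda_0)\le r<r_1(n_m)\le n_{m+1}$, for each $m$ (using \eqref{eq:fixedmsr} to convert the pointwise ball bounds of the last three lemmas into bounds on $M(r)$). For each block the relevant $M(r)$ bound is a sum of a few terms, each a product of powers of $2$ and $\lambda_0,\lambda_1,\gamma$ with exponents linear in $r$ and $n_m$; one then computes the exponent of $2$ appearing in $2^{rt}M(r)$, taking into account the asymptotics $r_0/n_m\to \log 2/(-\log(\lambda_0\gamma))$, $\ell_1(n_m)/n_m\to\log\gamma/(-\log 2)$, $\ell_2(n_m,\lambda_0)/n_m\to\log\gamma/\log\lambda_0$, $r_1/n_m\to \log 2\cdot(-\log\lambda_1)/(\log 2)^2$ (equivalently $r_1/n_m\to n_{m+1}\log\lambda_1/(-n_m\log 2)$ combined with the growth of $n_m$), together with $\xi n_m\approx\ell_2(n_m,\lambda_0)-\ell_1(n_m)$ and $L_{m-1}=o(r)$.

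The key computation is that, at the ``corner'' scales — namely $r\approx r_0$, $r\approx n_m$, $r\approx n_m+\ell_1(n_m)$, $r\approx n_m+\ell_2(n_m,\lambda_0)$ and $r\approx r_1$ — the leading exponent of the bound $2^{rt}M(r)$, as a function of $r/n_m$, is nonpositive precisely when $t\le s$, where $s=(2\log 2+\log\lambda_0)/\log(2/\gamma)$; this is the same balance that produced the upper bound in Section \ref{sec:upperbounds}, case $r=n+\ell_1(n)$. The discrepancy between working with $\lambda_0$ and working with the true $\lambda\in[\lambda_0,\lambda_1]$ is absorbed into $\eta(\lambda_0,\lambda_1)$: in Lemma \ref{lem:r0rn} the factor $(\lambda_1/\lambda_0)^{n_m}$ and in Lemma \ref{lem:rn} the use of $r_1$ defined via $\lambda_1$ cost a multiplicative error whose exponent, divided by the total ``length'' $r_1\approx n_m(1+\text{const})$ of the block, is bounded by $\eta(\lambda_0,\lambda_1)$; the $2^{O(r\epsilon)}$ and $\lambda^{-O(\epsilon r)}$ factors cost another $C_0\epsilon$ for an absolute $C_0$. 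Thus for every $t<s-C_0\epsilon-\eta(\lambda_0,\lambda_1)$ each block contributes a geometrically decaying (in $m$, and summably in $r$ within the block) total, so the full series converges and \eqref{eq:Lambda} holds with this $t$.

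Having established \eqref{eq:Lambda}, Fubini gives that for $\mathcal L$-a.e.\ $\lambda\in\Lambda_\epsilon$ the $t$-energy $\iint|\pi(\bi)-\pi(\bj)|^{-t}\d\mu_\lambda(\bi)\d\mu_\lambda(\bj)$ is finite, so part (2) of Lemma \ref{massdistribution} applied to $\bar\mu_\lambda$, which is supported on $\mathcal C\subset R^*(\pi_\lambda(\bz),\lambda)$ (here $\omega=\bz$), yields $\dimH R^*(\pi_\lambda(\omega),\lambda)\ge t$. Letting $t\uparrow s-C_0\epsilon-\eta(\lambda_0,\lambda_1)$ along a countable sequence gives the claimed inequality for $\mathcal L$-a.e.\ $\lambda\in\Lambda_\epsilon$, which is the statement of the lemma (with $C_0$ independent of $\epsilon$).

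I expect the main obstacle to be purely bookkeeping: one must verify, block by block and corner by corner, that the exponent of $2$ in $2^{rt}M(r)$ is negative for $t$ just below $s$, and in particular that the two ``new'' features of case (3) — the ambiguous-region branching factor $(2\lambda)^{-\xi n_m}\lambda^{-\epsilon\xi n_m}$ from \eqref{eq:measestimate} and the $\lambda_0$-versus-$\lambda_1$ slack — combine to exactly the correction $s\mapsto s-C_0\epsilon-\eta(\lambda_0,\lambda_1)$ and no worse. The delicate point is the block $n_m+\ell_1(n_m)<r<n_m+\ell_2(n_m,\lambda)$ (Lemma \ref{lm:n+ell_1nn+ell_2}), where the exponent is genuinely a nontrivial linear function of $r/n_m$ that must be checked to stay on the correct side of zero throughout the interval, not merely at its endpoints; this is where the definition of $\xi$ and the identity $\xi n_m\approx\ell_2(n_m,\lambda_0)-\ell_1(n_m)$ do the essential work.
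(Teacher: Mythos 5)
Your proposal is correct and follows essentially the same route as the paper: the paper's proof likewise splits $\sum_r 2^{rt}M(r)$ into the same blocks of $r$, converts the pointwise ball estimates of Lemmas \ref{lem:nrn+ell_1}--\ref{lem:rn} into $M(r)$ bounds via \eqref{eq:fixedmsr}, factors each bound as a main term times an error term to show $\log M(r)/\log 2^{-r}>s-C_0\epsilon-\eta(\lambda_0,\lambda_1)$ for all large $r$, and concludes with part (2) of Lemma \ref{massdistribution}. The only bookkeeping detail you gloss over is that the last block is itself split at $r=n_m+\ell_2(n_m,\lambda_0)$ and $r=n_m+\ell_2(n_m,\lambda_1)$ to handle the $\lambda$-dependence of $\ell_2$, but this is exactly the $\lambda_0$-versus-$\lambda_1$ slack you already attribute to $\eta$.
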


\begin{proof}
We follow the strategy described at the beginning of Section \ref{section73}, and aim for the energy bound \eqref{eq:Lambda}. We fix $\epsilon>0$ and take $\Lambda_\epsilon\subset [\lambda_0,\lambda_1]$ as specified at the start of Section \ref{section73}. We will use the estimate \eqref{eq:energystart} from above, which gives
\[
\int_{\Lambda_\epsilon}\iint|\pi(\bi) - \pi(\bj)|^{-t} \d\mu(\bi) \d\mu(\bj) \d\mathcal L\lesssim \sum_{r=1}^\infty 2^{rt}M(r). 
\]
Recall that  Lemmas \ref{lem:nrn+ell_1}, \ref{lem:rn} and \ref{lm:n+ell_1nn+ell_2} imply an upper bound to $M(r)$ via \eqref{eq:fixedmsr}. We are looking for a $t$ such that this sum converges, and thus it is sufficient to find $t$ such that for all large $r$
\[
\frac{\log M(r)}{\log (\tfrac 12)^r}>t. 
\]
Recall the notation
\[
s=\frac{2\log 2+\log \lambda_0}{\log (2/\gamma)}. 
\]
We will show that there is some $C_0>0$ such that for all $r$ sufficiently large
\begin{equation}\label{eq:dimcons}
\frac{\log M(r)}{\log (\tfrac 12)^r}>s-C_0\epsilon-\eta(\lambda_0,\lambda_1). 
\end{equation}
Notice that for $\lambda_0>1/(2 \gamma)$, 
\[
s<\min\left\{2+\frac{\log\lambda_0}{\log \tfrac12}, \frac{-\log 2}{\log (\gamma \lambda_1)}, 2+\frac{\log \lambda_0}{\log 2}+\frac{\log \gamma}{\log(\gamma\lambda_1)}\right\}. 
\] 
Let us now consider all possible values of $r$ in turn. We start with the case of Lemma \ref{lem:mainenergy} where $r_1<r<r_0<n_m$, and $M(r)\lesssim 2^{r\epsilon}(2^{-n_m}+2^{-2r}\lambda_0^{-r})$. Then one of these terms dominates and either, for large enough $r$, 
\[
\frac{\log M(r)}{\log (\tfrac 12)^r} \geq\frac{\log 2+\log 2^{-n_m}}{\log 2^{-r}}-\epsilon \ge \frac{\log 2+\log 2^{-n_m}}{\log 2^{-r_0}}-\epsilon\ge\frac {\log 2-\log 2}{\log (\lambda_0\gamma)}-\epsilon>s-\epsilon, 
\]
or 
\begin{equation}\label{eq:fulldim}
\frac{\log M(r)}{\log (\tfrac 12)^r} \geq \frac{\log 2+\log(2^{-2r}\lambda_0^{-r})}{\log 2^{-r}}-\epsilon=2+\frac{\log \lambda_0}{\log 2}-\frac{\log 2}{r\log 2}-\epsilon>s-\epsilon. 
\end{equation}

Now consider the case of Lemma \ref{lem:r0rn}, $r_0<r<n_m$ and $M(r)$ is bounded from above by 
\[
\lesssim 2^{2r\epsilon}\left(\frac{\lambda_1}{\lambda_0}\right)^{n_m}((\tfrac 12)^{2r}\lambda_0^{-r}+\lambda_0^{-n_m}\gamma^{-n_m}(\tfrac 12)^{r+n_m}). 
\]
We write ${\bf M}(r)=((\tfrac 12)^{2r}\lambda_0^{-r}+\lambda_0^{-n_m}\gamma^{-n_m}(\tfrac 12)^{r+n_m})$ for the main term, and for the error term use the notation ${\bf E}(r)=2^{2r\epsilon}\left(\frac{\lambda_1}{\lambda_0}\right)^{n_m}$ with $M(r)\lesssim {\bf M}(r){\bf E}(r)$.

Let us first consider the error term ${\bf E}(r)$. For this term for large enough $r$, 
\begin{eqnarray*}
\frac{\log({\bf E}(r))}{\log 2^{-r}}&\geq& -2\epsilon-\frac{n_m}{r}\frac{\frac{\log(\lambda_1)}{\log(\lambda_0)}}{\log 2}\\
&\geq&  -3\epsilon+\frac{\log 2}{\log \lambda_1}\frac{\frac{\log(\lambda_1)}{\log(\lambda_0)}}{\log 2}\\
&\geq&-3\epsilon-\left(\frac{\log\lambda_0}{\log\lambda_1}-1\right).
\end{eqnarray*}
As of the two main terms, one of them dominates in the upper bound; if it is the first term, we repeat the computation \eqref{eq:fulldim} to obtain ${\bf M}(r)\geq s$. If the second term is the larger one, we use the estimate $r<n_m+\ell_1(n_m)$, noting that $2\gamma\lambda>1$ to obtain the estimate
\[
\frac{\log {\bf M}(r)}{\log (\tfrac 12)^r} \geq \frac{\log 2+\log (\lambda_0\gamma)^{-n_m}(\tfrac 12)^{r+n_m}}{\log (\tfrac 12)^r}\ge 1+\frac{n_m\log (2\gamma\lambda_0)^{-1}}{(n_m+\ell_1)\log (\tfrac 12)}-\frac{1}{r}. 
\]
Here, for any $\delta>0$, for large enough $r$, 
\begin{equation}\label{eq:ell1}
\frac{\log \gamma}{\log \tfrac 12}-\delta<\frac{\ell_1(n_m)}{n_m}<\frac{\log \gamma}{\log \tfrac 12}+\delta.
\end{equation}
 Hence, for large enough $r$,
 \begin{equation}  
\begin{aligned}\label{eq:optimal}
\frac{-\log 2+\log (\lambda_0\gamma)^{-n_m}(\tfrac 12)^{r+n_m}}{\log (\tfrac 12)^r}&
\ge 1+\frac{n_m}{n_m+\ell_1(n_m)} + \frac{n_m}{n_m+\ell_1(n_m)}\cdot\frac{\log (\lambda_0 \gamma)^{-1}}{\log \tfrac 12}-\epsilon\\
&\ge \frac{2\log 2+\log \lambda_0}{\log (2/\gamma)}-\epsilon\\
&>s-\epsilon. 
\end{aligned}
\end{equation}
Combining with the error term we get that
$$\frac{\log M(r)}{-r \log 2}\geq s-4\epsilon-\left(\frac{\log\lambda_0}{\log\lambda_1}-1\right)\geq s-4\epsilon-\eta(\lambda_0,\lambda_1).$$

The next range of $r$ is $n_m<r<n_m+\ell_1(n_m)$, where by Lemma \ref{lem:nrn+ell_1}, 
\[
M(r)\lesssim 2^{2r\epsilon}(\lambda_0\gamma)^{-n_m}(\tfrac 12)^{r+n_m}.
\]
Again, we first estimate the impact of the error term
\[
\frac{2r\epsilon \log 2}{-r\log 2}= -2\epsilon.  
\]

We can now repeat the computation \eqref{eq:optimal}, to obtain the same bound for the main term, and hence, 
\[
\frac{\log M(r)}{\log (\tfrac 12)^r} \ge \frac{2 \log 2 +\log \lambda_0}{\log(2/\gamma)}- \epsilon>s- 3\epsilon. 
\]

For the next range, consider $n_m+\ell_1(n_m)<r<n_m+\ell_2(n_m,\lambda_0)$.  Let $\lambda\in \Lambda_\epsilon$ be arbitrary, and recall from Lemma \ref{lm:n+ell_1nn+ell_2} that for $n_m+\ell_1(n_m)<r<n(n_m)+\ell_2(n_m,\lambda)$ and any $x\in\mathcal{C}$ we have,   
\begin{eqnarray*}
\overline{\mu}_{\lambda}(Q(x, (\tfrac 12)^r))&\lesssim& 2^{-2r+\ell_2(\lambda, n_m)-\xi n_m}\lambda^{-r-\xi n_m}2^{\epsilon 2r}\lambda^{-\epsilon\xi n_m}\\
&\le& 2^{\epsilon 2r}\lambda_0^{-\epsilon\xi n_m} 2^{-2r+\ell_2(\lambda, n_m)-\xi n_m}\lambda_0^{-r-\xi n_m}\\
&\leq&2^{\epsilon 2r}\lambda_0^{-\epsilon\xi n}2^{\ell_2(n_m,\lambda)-\ell_2(n_m,\lambda_0)}2^{-2r+\ell_1(n_m)}\lambda_0^{-r-\ell_2(n_m,\lambda_0)+\ell_1(n_m)}\\
&\leq&2^{\epsilon 2r}\lambda_0^{-\epsilon\xi n}2^{\ell_2(n_m,\lambda_1)-\ell_2(n_m,\lambda_0)}2^{-2r+\ell_1(n_m)}\lambda_0^{-r-\ell_2(n_m,\lambda_0)+\ell_1(n_m)}.
\end{eqnarray*}
This is now independent of $\lambda$ and so we can estimate from \eqref{eq:fixedmsr}
$$M(r)\lesssim 2^{\epsilon 2r}\lambda_0^{-\epsilon\xi n}2^{\ell_2(n_m,\lambda_1)-\ell_2(n_m,\lambda_0)}2^{-2r+\ell_1(n_m)}\lambda_0^{-r-\ell_2(n_m,\lambda_0)+\ell_1(n_m)}.$$
Thus we can write $M(r)\lesssim {\bf E}(r){\bf M}(r)$ where 
$${\bf E}(r)=2^{\epsilon 2r}\lambda_0^{-\epsilon\xi n}2^{\ell_2(n_m,\lambda_1)-\ell_2(n_m,\lambda_0)}$$
and
$${\bf M}(r)=2^{-2r+\ell_1(n_m)}\lambda_0^{-r-\ell_2(n_m,\lambda_0)+\ell_1(n_m)}.$$
We first have for $r$ sufficiently large and using that $\lambda_0^{-\epsilon\xi n_m}\leq 2^{r\epsilon}$
\begin{eqnarray*}
\frac{\log {\bf E}(r)}{-r\log 2}&\geq& -3\epsilon-\frac{\ell_2(n_m,\lambda_1)-\ell_2(n_m,\lambda_0)}{r}\\
&\geq& -3\epsilon-\frac{\ell_2(n_m,\lambda_1)-\ell_2(n_m,\lambda_0)}{n_m}\\
&\geq& -4\epsilon-\left(\frac{\log\gamma}{\log \lambda_1}-\frac{\log\gamma}{\log \lambda_0}\right)
\end{eqnarray*}
 
For the main part ${\bf M}(r)$ we use that $2+\frac{\log \lambda_0}{\log \frac{1}{2}}>s$ to get
\begin{eqnarray*}
{\bf M}(r)&\lesssim&  2^{-2r+\ell_2(\lambda_0)-\xi n_m}\lambda_0^{-r-\xi n_m}\\
&\lesssim& 2^{-n_m-r}\lambda_0^{-n_m-\ell_2(n_m,\lambda_0)}2^{-r+n_m+\ell_2(\lambda_0)-\xi n_m}\lambda_0^{-r-\xi n_m+n_m+\ell_2(\lambda_0,n_m)}\\
&\lesssim& (2^{-n-\ell_1(n_m)}\lambda_0^{-n_m})^s2^{-r+n_m+\ell_1(n_m)}\lambda_0^{-r+\ell_1(n_m)+n_m}\\
&\lesssim&  (2^{-n_m-\ell_1(n_m)}\lambda_0^{-n_m})^s2^{-2(r-n_m-\ell_1(n_m))}\lambda_0^{n_m-r+\ell_1(n_m)}\\
&\lesssim&  (2^{-n_m-\ell_1(n_m)}\lambda_0^{-n_m})^s (2^{-r+n_m+\ell_1(n_m)})^{2+\frac{\log \lambda_0}{\log \frac{1}{2}}}\\
&\lesssim&  (2^{-n_m-\ell_1(n_m)}\lambda_0^{-n_m})^s(2^{-r+n_m+\ell_1(n_m)})^s\leq 2^{-rs}.
\end{eqnarray*}
Putting this together for large enough $r$ we have
$$\frac{\log M(r)}{\log r}> s-5\epsilon- \left(\frac{\log\gamma}{\log \lambda_1}-\frac{\log\gamma}{\log \lambda_0}\right)$$

On the other hand, if $n_m+\ell_2(n_m,\lambda_0)<r<n_m+\ell_2(n_m,\lambda_1)$, then we can use the estimate from Lemma \ref{lm:n+ell_1nn+ell_2} for cubes radius $2^{-n_m-\ell_2(\lambda_0,n_m)}$. This gives
\begin{eqnarray*}
\overline{\mu}_{\lambda}(Q(x, (\tfrac 12)^r))&\lesssim& 2^{   2(n_m+\ell_2(n_m,\lambda_0))\epsilon}\lambda^{-\epsilon\xi n_m}\lambda^{-n_m-\xi n_m}2^{-2(n_m+\ell_2(n_m,\lambda_0))+\ell_2(n_m,\lambda)-\xi n_m}\\
&\lesssim&2^{2r\epsilon}\lambda^{-\epsilon\xi n_m}\lambda^{-n_m-2\ell_2(n_m,\lambda_0)+\ell_1(n_m)}2^{-2n_m-2\ell_2(n_m,\lambda_0)+\ell_2(n_m,\lambda_1)+\ell_1(n_m)}\\
&\lesssim&2^{2r\epsilon}\lambda^{-\epsilon\xi n_m}2^{\ell_2(n_m,\lambda_1)-\ell_2(n_m,\lambda_0)}\lambda^{-n_m-2\ell_2(n_m,\lambda_0)+\ell_1(n_m)}2^{-2n_m-\ell_2(n_m,\lambda_0)+\ell_1(n_m)}.
\end{eqnarray*}
Hence for $r$ in this range we can estimate $M(r)\lesssim {\bf E}(r){\bf M}(r)$ where
$${\bf E}(r)\leq 2^{2r\epsilon}\lambda_0^{-\epsilon\xi n_m}2^{\ell_2(n_m,\lambda_1)-\ell_2(n_m,\lambda_0)}$$
and
$${\bf M}(r)\leq \lambda_0^{-n_m-2\ell_2(n_m,\lambda_0)+\ell_1(n_m)}2^{-2n_m-\ell_2(n_m,\lambda_0)+\ell_1(n_m)}.$$
For the error term, using that $\lambda_0^{-\epsilon\xi n_m}\leq 2^{r\epsilon}$, for sufficiently large $r$
\begin{eqnarray*}
\frac{\log {\bf E}(r)}{-r\log 2}&\geq & -3\epsilon-\frac{\ell_2(n_m,\lambda_1)-\ell_2(n_m,\lambda_0)}{r}\\
&\geq&-3\epsilon-\frac{\ell_2(n_m,\lambda_1)-\ell_2(n_m,\lambda_0)}{n_m}\\
&\geq& -4\epsilon- \left(\frac{\log\gamma}{\log \lambda_1}-\frac{\log\gamma}{\log \lambda_0}\right)
\end{eqnarray*}
With the main term ${\bf M}(r)$ we work as in the previous part to get
\begin{eqnarray*}
{\bf M}(r)&\leq&\lambda_0^{-n_m-2\ell_2(n_m,\lambda_0)+\ell_1(n_m)}2^{-2n_m-\ell_2(n_m,\lambda_0)+\ell_1(n_m)}\\
&\lesssim&  2^{-2n_m-\ell_1(n_m)}\lambda_0^{-n_m-\ell_2(n_m,\lambda_0)} 2^{-2(\ell_2(n_m,\lambda_0)-\ell_1(n_m))}\lambda_0^{-\ell_2(n_m,\lambda_0)+\ell_1(n_m)}\\
&\lesssim&  (2^{-n_m-\ell_1(n_m)})^s(2^{-(\ell_2(n_m,\lambda_0)-\ell_1(n_m))})^{2+\frac{\log \lambda_0}{\log 2}}\\
&\lesssim& 2^{-s(n_m-\ell_2(n_m,\lambda_0))}\\
&\lesssim& 2^{s(\ell_2(n_m,\lambda_1)-\ell_2(n_m,\lambda_0))}2^{-rs}.
\end{eqnarray*}
Hence, for $r$ in this range sufficiently large, we get
\begin{eqnarray*}
\frac{\log M(r)}{-2\log r}&\geq& s-\epsilon\left(3-\frac{\log\gamma}{\log 2}\right)-(s+1)\frac{\ell_2(n_m,\lambda_1)-\ell_1(n_m,\lambda_0)}{r}\\
&\geq&s-5\epsilon-(s+1)\left(\frac{\log\gamma}{\log \lambda_1}-\frac{\log\gamma}{\log \lambda_0}\right) 
\end{eqnarray*}

Finally we have the range $n_m+\ell_2(\lambda_1,n_m)<r<r_1$. Here by Lemma \ref{lem:rn} we get
\begin{eqnarray*}
M(r)&\lesssim&   \lambda^{-\epsilon r} 2^{2r\epsilon} 2^{-2r+\ell_2(n_m,\lambda)-\xi n_m}\lambda^{-\xi n_m-r}\\
&\lesssim&  2^{\ell_2(n_m,\lambda_1)-\ell_2(n_m, \lambda_0)}\lambda^{-\epsilon r} 2^{2r\epsilon}2^{-2r+\ell_1(n_m)}\lambda_0^{-\ell_2(n_m,\lambda_0)+\ell_1(n_m)-r}
\end{eqnarray*}
Writing ${\bf M}(r)=2^{-2r-\ell_1(n_m)}\lambda_0^{-\ell_2(n_m,\lambda_0)+\ell_1(n_m)}$ and ${\bf E}(r)=2^{\ell_2(n_m,\lambda_1)-\ell_2(n_m, \lambda_0)}\lambda^{-\epsilon r}$ we get $M(r)\lesssim {\bf M}(r){\bf E}(r)$. 
We have, using that $\lambda^{-\epsilon r}\leq 2^{r\epsilon}$
$$\frac{\log({\bf E}(r))}{\log 2^{-r}}\geq -3\epsilon-\left(\frac{\log\gamma}{\log \lambda_1}-\frac{\log\gamma}{\log \lambda_0}\right).$$
We know that $2+\frac{\log \lambda_0}{\log 2}>\frac{2\log 2+\log\lambda_0}{\log(2/\gamma)}=s$, 
$2^{-2n_m}\lambda_0^{-n_m}$ is approximately $(2^{-n_m-\ell_1(n_m)})^s$ and $\lambda_0^{\ell_2(n_m,\lambda_0)}$ is approximately $2^{-n_m-\ell_1(n_m)}$, and where we can choose $r$ so large that the error given by these approximations is as small as we wish. Hence, 
\begin{eqnarray*}
{\bf M}(r)&=&2^{-2r+\ell_1(n_m)}\lambda_0^{-\ell_2(n_m,\lambda_0)+\ell_1(n_m)-r}\\
&=&  2^{-2n_m-\ell_1(n_m)}\lambda_0^{-n_m-\ell_2(n_m,\lambda_0)}2^{-2(r-n_m-\ell_1(n_m))}\lambda_0^{-r+n_m+\ell_1(n_m)}\\
&=&   (2^{-n_m-\ell_1(n_m)})^s(2^{-r+\ell_1(n_m)+n_m})^{2+\frac{\log \lambda_0}{\log 2}}\\
&\leq& (2^{-n_m-\ell_1(n_m)})^s(2^{-r+\ell_1(n_m)+n_m})^{s}\leq 2^{-rs}
\end{eqnarray*}
Thus in this range we have that
$$\frac{\log M(r)}{\log r}\geq   s-3\epsilon-\left(\frac{\log\gamma}{\log \lambda_1}-\frac{\log\gamma}{\log \lambda_0}\right).$$

Putting everything together this completes the proof with $C_0=5$ and 
$$\eta(\lambda_0,\lambda_1)=\left(\frac{\log \lambda_0}{\log \lambda_1}-1\right)+(s+1)\left(\frac{\log\gamma}{\log \lambda_1}-\frac{\log\gamma}{\log \lambda_0}\right).$$
\end{proof}

\begin{proof}[Proof of Theorem \ref{thm:main} (3) as a consequence of Lemma \ref{lowerboundmain}]\label{proof}

Let us first check that for almost every $\lambda\in [\lambda_0, \lambda_1]$, the dimension has the bound
\[
\dimH R^*(\pi_{\lambda}(\bi), \lambda, \gamma)\geq\frac{2\log 2+\log \lambda_0}{\log (2/\gamma)}-\eta(\lambda_0,\lambda_1). 
\]

Let $\epsilon_n=\tfrac 1n$. Notice that the sets
\[
\{\lambda\in [\lambda_0, \lambda_1]\mid \dimH R^*(\pi_{\lambda}(\bi), \lambda, \gamma)\geq\frac{2\log 2+\log \lambda_0}{\log (2/\gamma)}-\eta(\lambda_0,\lambda_1)-C_0\epsilon_n\}
\]
are nested and by Lemma \ref{lowerboundmain} each have measure $\lambda_1-\lambda_0 - \epsilon_n$. Hence, by continuity of measures,  
\[
\mathcal L \{\lambda\in [\lambda_0, \lambda_1]\mid\dimH R^*(\pi_{\lambda}(\bi), \lambda, \gamma)\geq\frac{2\log 2+\log \lambda_0}{\log (2/\gamma)}-\eta(\lambda_0,\lambda_1)\}=\lambda_1 - \lambda_0. 
\]
In particular, for $\mathcal L$-almost all $\lambda\in [\lambda_0, \lambda_1]$
\[
\dimH R^*(\pi_{\lambda}(\bi), \lambda, \gamma)\geq \frac{2\log 2+\log \lambda}{\log (2/\gamma)}+\frac{\log (\lambda_0/\lambda_1)}{\log(2/\gamma)}-\eta(\lambda_0,\lambda_1).
\]
Since we can split the interval $[\tfrac 1{2\gamma},\bar{\lambda}]$ into finitely many intervals of however small length, and in particular, make $\log(\lambda_0/\lambda_1)$ and $\eta(\lambda_0,\lambda_1)$ uniformly as small as we like, we can obtain that 
for $\nu$ almost all $\bi\in\Sigma$ for $\mathcal L$-almost all $\lambda\in [\tfrac 1{2\gamma}, \overline\lambda)$
\[
\dimH R^*(\pi_{\lambda}(\bi), \lambda, \gamma)\geq \frac{2\log 2+\log \lambda}{\log (2/\gamma)}.
\]
By Fubini it follows that for $\mathcal L$-almost all $\lambda\in [\tfrac 1{2\gamma}, \overline\lambda)$, for $\nu$ almost all $\bi\in\Sigma$ 
\[
\dimH R^*(\pi_{\lambda}(\bi), \lambda, \gamma)\geq \frac{2\log 2+\log \lambda}{\log (2/\gamma)}
\]
and the result we want now follows as $\bar\nu_{\lambda}=\nu\circ\pi_{\lambda}^{-1}$.
\end{proof}

\bibliographystyle{plain}
\bibliography{vaitbib}

\end{document}